\newtheorem{theorem}{Theorem}[section]
\newtheorem{lemma}[theorem]{Lemma}
\newtheorem{proposition}[theorem]{Proposition}
\newtheorem{corollary}[theorem]{Corollary}
\theoremstyle{definition}
\theoremstyle{remark}
\numberwithin{figure}{section}
\numberwithin{table}{section}
\newcommand\bC{{\mathbb C}}
\newcommand\bR{{\mathbb R}}
\newcommand\Cg{{{\mathbb C}_g}}
\newcommand\clll{{c^\lambda}}
\newcommand\cllm{\overline{c^\mu}}
\newcommand\dc{\overline{\partial}}
\newcommand\dd{{\partial}}
\newcommand\elll{{\ell^\lambda}}
\newcommand\ellm{\overline{\ell^\mu}}
\newcommand\fiber{{\int_{\text{fiber}}}}
\newcommand\harmonic{{\mathcal{H}}}
\newcommand\hatM{{\widehat{M}}}
\newcommand\hatPhi{{\widehat{\Phi}}}
\newcommand\hdot{{\stackrel{\centerdot}{h}}}
\newcommand\LaH{{{\bigwedge}^3H}}
\newcommand\llll{{L^\lambda}}
\newcommand\lllm{\overline{L^\mu}}
\newcommand\Mg{{{\mathbb M}_g}}
\newcommand\Mgone{{{\mathbb M}_{g, 1}}}
\newcommand\moduli{{{\mathbb M}_g}}
\newcommand\mubar{{\overline{\mu}}}
\newcommand\nudot{{\stackrel{\centerdot}{\nu}}}
\newcommand\numaru{{\stackrel{\circ}{\nu}}}
\newcommand\omegaone{{\omega_{(1)}}}
\newcommand\omegaonedot{{\stackrel{\centerdot}{\omega_{(1)}}}}
\newcommand\pione{{\pi^{\mbox{\tiny orb}}_1}}
\newcommand\pH{{\frak{p}^H}}
\newcommand\pU{{\frak{p}^U}}
\newcommand\qH{{\frak{q}^H}}
\newcommand\qU{{\frak{q}^U}}
\newcommand\stardot{{\stackrel{\centerdot}{\ast}}}
\newcommand\T{{\widehat{T}}}
\newcommand\tast{{\widetilde{\ast}}}
\newcommand\TCM{{T_{\mathbb{C}_g/\mathbb{M}_g}}}
\newcommand\wotimes{{\widehat{\otimes}}}
\begin{document}

\title[Johnson's homomorphisms and the Arakelov-Green function]
{Johnson's homomorphisms and \\
the Arakelov-Green function}

\author[Nariya Kawazumi]{Nariya Kawazumi}
\thanks{The author is partially supported
by  Grant-in-Aid for Scientific Research (A) (No.18204002), 
the Japan Society for Promotion of Sciences.
}
\address{Department of Mathematical Sciences\\
University of Tokyo \\Komaba, Tokyo 153-8914\\
Japan}
\email{kawazumi{\char'100}ms.u-tokyo.ac.jp}
\keywords{Riemann surfaces, Johnson's homomorphism,
Arakelov-Green function}

\begin{abstract}
Let $\pi: {\mathbb C}_g \to {\mathbb M}_g$ 
be the universal family of 
compact Riemann surfaces of genus $g \geq 1$.
We introduce a real-valued function 
on the moduli space ${\mathbb M}_g$ and compute the first and 
the second variations of the function.
As a consequence we relate the Chern form of the 
relative tangent bundle $T_{{\mathbb C}_g/{\mathbb M}_g}$ 
induced by the Arakelov-Green function 
with differential forms on ${\mathbb C}_g$ induced by 
a flat connection whose holonomy gives 
Johnson's homomorphisms on the mapping class group.
\end{abstract}

\maketitle


\begin{center}
 Introduction
\end{center}

Let $\pi: {\mathbb C}_g \to {\mathbb M}_g$ be the universal family of 
compact Riemann surfaces of genus $g \geq 1$.
The orbifold fundamental group $\pione(\Mg)$ of the moduli space 
$\Mg$ is the mapping class group for a closed surface of genus $g$. 
Johnson \cite{J1} introduced a series of nested subgroups of 
$\pione(\Mg)$ and a homomorphism 
on each of these subgroups. Today they are called Johnson's 
homomorphisms. The first one of the subgroups is the Torelli 
group $\mathcal{I}_g$, the kernel of the action of 
the mapping class group on the homology of the surface.
Moreover he proved the free part of the abelianization 
${\mathcal{I}_g}^{\mbox{\tiny abel}}$ is given by 
the first Johnson homomorphism \cite{J2}. 
Morita extended the first Johnson homomorphism to 
a crossed homomorphism on the whole group $\pione(\Mg)$,
and proved that his extension yields all of the Morita-Mumford 
classes on the moduli space $\Mg$ \cite{MoJ}\cite{MoF}.\par
It is an important subject to study differential geometry 
of the moduli space $\Mg$ through Johnson's homomorphisms.
Harris \cite{Harris} defined the harmonic volume 
of a compact Riemann surface. This can be interpreted 
as an analytic couterpart of the first Johnson homomorphism. Let
$\mathcal{L}$ be the  Hodge line bundle on the moduli space $\Mg$. 
The first Morita-Mumford class $e_1$ is twelve times 
the Chern class $c_1(\mathcal{L})$. 
Hain and Reed took the pullback of 
the biextension line bundle \cite{Hain} along the harmonic volume 
to construct  a Hermitian line bundle $\mathcal{B}$ on $\Mg$, isomorphic 
to $\mathcal{L}^{\otimes (8g+4)}$. Comparing the Hermitian metric on
$\mathcal{B}$ with the standard metric on $\mathcal{L}$, they 
defined and studied 
a real-valued function $\beta_g: \Mg \to \bR$ \cite{HR}.\par
In our previous paper \cite{Kaw2} we introduced a flat 
connection on a vector bundle over the space 
$\Mgone := T_{{\mathbb C}_g/{\mathbb M}_g} 
\setminus \mbox{($0$-section)}$
whose holonomy gives all of Johnson's homomorphisms 
for the mapping class group $\pi_1(\Mgone)$.  
The first term of the connection form is exactly 
the first variation of (pointed) harmonic volumes. 
By Morita's recipe
\cite{MoF}  the connection form induces canonical $2$-forms 
$e^J$ on ${\mathbb C}_g$ and $e^J_1$ on ${\mathbb M}_g$
representing the Chern class of the relative tangent bundle
$T_{{\mathbb C}_g/{\mathbb M}_g}$ and the first Morita-Mumford class
$e_1$, respectively. 
The $2$-form $e^J_1$ corresponds to the Hermitian bundle $\mathcal{B}$. 
\par
Let $\{\psi_i\}^g_{i=1}$ be an orthonormal 
basis of the holomorphic $1$-forms; 
$\frac{\sqrt{-1}}{2}\int_C\psi_i\wedge\overline{\psi_j} 
= \delta_{ij}$, $1 \leq i, j \leq g$. 
Then the $2$-form 
$B :=  \frac{\sqrt{-1}}{2g}\sum^g_{i=1}\psi_i\wedge
\overline{\psi_i}$ is a volume form on $C$, 
independent of the choice of the orthonormal basis 
$\{\psi_i\}^g_{i=1}$. 
The $2$-form $e^J$ is related to the volume form $B$. 
It restricts to $B$ 
$$
e^J\vert_C = (2-2g)B \in A^2(C)
$$
on any Riemann surface 
$C$ regarded as a fiber of the universal family 
 $\pi: {\Bbb C}_g \to {\Bbb M}_g$ (\ref{403.65}). 
Here $A^q(\,\,)$ means the space of the $q$-forms. 
\par
Arakelov made use of the volume form $B$ in his study of 
arithmetic surfaces \cite{A}. 
The Arakelov-Green function 
$G$ on a compact Riemann surface $C$ is defined by
$G(P_0, P_1) := \exp(-4\pi h_{P_0}(P_1))$, $P_0, P_1 \in C$, 
where the function $h_{P_0}$ is the Green function with respect to the
volume form $B$ (\ref{107.1}).  
We may regard the Arakelov-Green function
$G$ as a function defined on the fiber product 
${\mathbb C}_g\times_{{\mathbb M}_g}{\mathbb C}_g$.  
Then the differential form $e^A := 
\frac{1}{2\pi\sqrt{-1}}\partial\overline{\partial}\log G
\vert_{\mbox{(diagonal)}}$ represents 
the Chern class of the line bundle $T_{{\mathbb C}_g/{\mathbb M}_g}$, 
since the normal bundle of the diagonal in the product 
${\mathbb C}_g\times_{{\mathbb M}_g}{\mathbb C}_g$ is 
exactly the relative tangent bundle 
$T_{{\mathbb C}_g/{\mathbb M}_g}$. 
As was observed by Arakelov \cite{A}, we have 
$$
e^A\vert_C = (2-2g)B \in A^2(C)
$$
for any $C$. \par
In this paper we will give an explicit function 
which links $e^J$ and $e^A$ together. We define 
$$
a_g(C) := -\sum^g_{i, j=1}\int_C\psi_i\wedge\overline{\psi_j}\,
\hatPhi(\overline{\psi_i}\wedge\psi_j)
$$
for any compact Riemann surface $C$,
where $\hatPhi: A^2(C) \to A^0(C)$ is the Green operator 
with respect to the volume form $B$ (\ref{106.1}). 
It is independent of the choice of
the orthonormal basis $\{\psi_i\}^g_{i=1}$. If $g \geq 2$, 
$a_g(C)$ is a positive real number (Corollary \ref{108.5}).
Hence we obtain a real-valued function $a_g: \Mg \to \bR$. 
Then we have 
\begin{theorem}[= Theorem \ref{300.1}]
$$
e^A - e^J = \frac{-2\sqrt{-1}}{2g(2g+1)}\dd\dc a_g.
$$
\end{theorem}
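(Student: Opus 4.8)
The plan is to present $e^A$ and $e^J$ as curvature $(1,1)$-forms of two Hermitian metrics on the relative tangent bundle $\TCM$, reduce their difference to the $\dd\dc$ of a potential that lives on the base $\Mg$, and then identify that potential with a multiple of $a_g$ through the first and second variation formulas. The guiding observation is that both forms are real, closed, of type $(1,1)$, represent $c_1(\TCM)$, and restrict to $(2-2g)B$ on every fiber $C$.

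First I would write $e^A$ as the Chern form of the Arakelov metric $h_A$ on $\TCM$, namely the metric read off from the regular part of $\log G=-4\pi h_{P_0}$ along the diagonal, and $e^J$ as the Chern form of the metric $h_J$ underlying Morita's recipe for the connection of \cite{Kaw2}. The standard formula for the difference of two metric Chern forms then gives
$$
e^A-e^J=-\frac{\sqrt{-1}}{2\pi}\,\dd\dc\log\rho,\qquad \rho:=h_A/h_J .
$$
Since the two Chern forms coincide on each compact fiber $C$, the function $\log\rho$ is pluriharmonic, hence constant, along $C$; therefore $\rho$ descends to a positive function on $\Mg$ and the right-hand side is automatically the pullback of a $(1,1)$-form from $\Mg$, with no fiber or mixed components. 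This is precisely the structure of $\dd\dc a_g$, so the theorem reduces to the identity $\log\rho=\tfrac{2\pi}{g(2g+1)}\,a_g$ up to a pluriharmonic term (which $\dd\dc$ annihilates), the constant being fixed by comparison of the two displayed normalizations.

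It then remains to compute $\log\rho$, i.e.\ the second variation of the Arakelov metric along $\Mg$. I would differentiate $h_{P_0}$ along a tangent direction of $\Mg$, expressing the result through the Green operator $\hatPhi$ associated with $B$; pairing the first variation of the holomorphic forms $\psi_i$ against $\overline{\psi_j}$ and applying $\hatPhi$ to the $2$-forms $\overline{\psi_i}\wedge\psi_j$ reproduces exactly the quadratic expression $-\sum_{i,j}\int_C\psi_i\wedge\overline{\psi_j}\,\hatPhi(\overline{\psi_i}\wedge\psi_j)$ that defines $a_g$. Matching the elementary normalizations—the factor $\tfrac{1}{2g}$ in $B$, the $2\sqrt{-1}$ built into $\hatPhi$, and the weight $2g(2g+1)$ coming from the $\mathrm{Sp}$-equivariant normalization in Morita's recipe—yields the stated constant.

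The principal obstacle is this second-variation computation. One must differentiate the Green function $h_{P_0}$ twice along $\Mg$ while the volume form $B$—and hence the operator $\hatPhi$ itself—also varies; the derivative of $\hatPhi$ contributes an extra term that must be handled, and the logarithmic singularity of $\log G$ on the diagonal must be regularized consistently with the diagonal restriction defining $e^A$. Controlling simultaneously the variation of the complex structure, of the orthonormal basis $\{\psi_i\}$, and of $\hatPhi$ is the delicate part; once the second variation is organized into the Green-operator pairing above, the identification with $\dd\dc a_g$ and the verification of the constant are routine.
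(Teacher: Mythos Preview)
Your outline has a real gap at its first step: you assert that $e^J$ is the Chern form of a Hermitian metric $h_J$ on $\TCM$, ``the metric underlying Morita's recipe,'' but no such metric is produced anywhere. In the paper $e^J$ is defined purely combinatorially as $-\frac{1}{2g(2g+1)}(M_1+M_2)(\eta_1^{\otimes 2})$, a closed $(1,1)$-form built from the first component $\eta_1$ of the connection; there is no metric in sight, and without one the formula $e^A-e^J=-\frac{\sqrt{-1}}{2\pi}\dd\dc\log\rho$ has no content. Absent a global $\dd\dc$-lemma on the orbifold $\Cg$, you cannot even conclude abstractly that the difference of two cohomologous closed $(1,1)$-forms is $\dd\dc$-exact. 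So your very first reduction is unjustified, and the rest of the plan (the second-variation computation you flag as the ``principal obstacle'') never gets off the ground.

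The paper's route is completely different and avoids second variations altogether. It computes two \emph{first} variations: that of $a_g$, governed by a quadratic differential $\Xi$ (Theorem~\ref{203.3}), and that of $h_{P_0}(P_0)$, governed by a quadratic differential $\Upsilon$ (Theorem~\ref{207.3}). The key algebraic identity (Lemma~\ref{303.2}) is
\[
(m\otimes m + M)\bigl(\omega'_{(2)}\omega'_{(2)} + 2\omega'_{(3)}\omega'_{(1)}\bigr) = \Xi + 2g(2g+1)\,\Upsilon,
\]
which identifies the contraction $(m\otimes m)\eta'_2$ with $\sqrt{-1}\,\dd a_g + 2g(2g+1)\sqrt{-1}\,\dd h(P_0)$. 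Taking $d$ of the real form $(m\otimes m)\eta_2$ then gives $-2\sqrt{-1}\,\dd\dc a_g - 2g(2g+1)\,e^A$ on one side; on the other side, the \emph{flatness} of the connection yields $d\eta_2=(M_3+M_4)(\eta_1^{\otimes 2})$ and hence $(m\otimes m)d\eta_2=-2g(2g+1)\,e^J$. Equating the two expressions for $d(m\otimes m)\eta_2$ gives the theorem immediately. The point you are missing is that $e^J$ enters not through a metric but through the structure equation $d\eta_2=(M_3+M_4)(\eta_1^{\otimes 2})$, and that is what lets one stop at first variations.
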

The $2$-form $e^J$ is induced by the exterior derivative 
of the second term of the flat connection. 
This fact simplifies the proof, which requires only the 
first variation of the function $a_g$. 
\par

Next we study the $2$-form $e_1^J$. 
Moreover we consider the integral along the fiber 
$e^F_1 := \int_{\text fiber}(e^J)^2 \in A^2(\Mg)$, which also 
represents the first Morita-Mumford class $e_1$. 
The difference $e^F_1 - e^J_1 \in A^{2}({\Bbb M}_g)$ is 
null-cohomologous, but does not vanish as a differential 
form. Then we have 
\begin{theorem}[= Theorem \ref{603.2}]
$$
\frac{-2\sqrt{-1}}{2g(2g+1)}\dd\dc a_g 
= \frac{1}{(2g-2)^2}(e^F_1 - e^J_1).
$$
\end{theorem}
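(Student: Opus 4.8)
The plan is to deduce the identity from Theorem~\ref{300.1} by fiber integration, reducing everything to a form-level comparison of the two representatives $e^F_1$ and $e^J_1$ of the first Morita--Mumford class $e_1$. Set $\eta:=\frac{-2\sqrt{-1}}{2g(2g+1)}\dd\dc a_g\in A^2(\Mg)$. Since $a_g$ is a function on the base, $\pi^*a_g$ is constant along the fibers, so $\dd\dc\pi^*a_g=\pi^*\dd\dc a_g$ and Theorem~\ref{300.1} reads $e^A-e^J=\pi^*\eta$ on $\Cg$. The asserted identity is therefore equivalent to the equality $e^F_1-e^J_1=(2g-2)^2\,\eta$ of $2$-forms on $\Mg$, and the task is to pin down this null-cohomologous difference exactly.

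The computational engine is the projection formula together with the normalization $\fiber e^J=\fiber e^A=2-2g$, which is just the degree of $\TCM$ on a fiber (equivalently $\int_C(2-2g)B$ with $\int_C B=1$). Writing $e^A=e^J+\pi^*\eta$ and expanding, every replacement of a factor $e^J$ by $e^A$ inside a fiberwise product of two Chern forms costs one factor $(2-2g)\eta$: concretely $\fiber(e^A\wedge e^J)=e^F_1+(2-2g)\eta$ and $\fiber(e^A)^2=e^F_1+2(2-2g)\eta$, the pulled-back quadratic term $\pi^*(\eta\wedge\eta)$ integrating to zero over the two-dimensional fiber. This already expresses the Arakelov representative $\fiber(e^A)^2$ of $e_1$ through $e^F_1$ and $\eta$ as a clean corollary of Theorem~\ref{300.1}; the quadratic factor $(2g-2)^2$ we are after must therefore come from the second representative $e^J_1$, not from $e^A$.

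Locating $e^J_1$ in this picture is the heart of the matter. Here $e^J$ is induced by $d\omegatwo$, the exterior derivative of the second term $\omegatwo$ of the flat connection, so $e^F_1=\fiber(d\omegatwo)^2$, whereas $e^J_1$ is Morita's form built directly on $\Mg$ from the connection by the $Sp(2g)$-equivariant contraction $\LaH\to H$ adjoint to wedging with the symplectic class. The strategy is to write out $e^F_1-e^J_1$, integrate by parts along the fiber to trade one factor $d\omegatwo$ for $\omegatwo$, and reduce the difference to a single fiber integral of the shape $\fiber(\omegatwo\wedge e^J)$; the Euler number $\fiber e^J=2-2g$ then enters once at each of the two Chern-form slots, and this is the mechanism producing $(2g-2)^2$. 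What remains is to recognize the surviving base $2$-form as $\eta$ itself, which one does by checking it has the variational formula defining $a_g$ in Theorem~\ref{300.1}. The main obstacle is exactly this bookkeeping: one must carry the $Sp(2g)$-equivariant contraction carefully enough to certify the factor is $(2g-2)^2$ rather than another polynomial in $g$, and to see that the residual integrand is $a_g$ and not some other harmonic invariant. Because the difference is quadratic in the Chern form, I expect this step to require the second variation of $a_g$, in contrast to the first-order input that sufficed for Theorem~\ref{300.1}.
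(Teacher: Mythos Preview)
Your approach has a genuine gap: fiber integration of Theorem~\ref{300.1} cannot by itself produce Theorem~\ref{603.2}. Writing $e^A=e^J+\pi^*\eta$ and integrating along the fiber gives you $\fiber(e^A)^2=e^F_1+2(2-2g)\eta$, which is a relation among $\fiber(e^A)^2$, $e^F_1$, and $\eta$. But the target identity involves $e^J_1$, not $\fiber(e^A)^2$, and $e^J_1$ is \emph{not} defined as a fiber integral: it is the $Sp$-contraction $(2g+1)^{-1}(-3M_1+2(g-1)M_2)(\eta_1^{\otimes 2})$ built directly on $\Mg$ from the $1$-form $\eta_1$. No amount of pushing Theorem~\ref{300.1} forward will produce $e^J_1$ unless you feed in an independent relation between $e^J_1$ and fiberwise data. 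Your integration-by-parts sketch (trade $d\omegatwo$ for $\omegatwo$) does not supply this: $e^J$ is related to $d\eta_2$, not to $d\omegatwo$, and in any case the passage from $(\eta_1)^{\otimes 2}$ to a fiber integral of $(e^J)^2$ is exactly the content of comparing $e^F_1$ with $e^J_1$, which is what you are trying to prove.

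The paper's proof is orthogonal to Theorem~\ref{300.1}: it never touches $e^A$. It computes all three ingredients $E^F_1(\lambda,\mubar)$, $E^J_1(\lambda,\mubar)$, and $(\dd\dc a_g)(\lambda,\mubar)$ explicitly as functions of two Beltrami differentials $\lambda,\mu$, using the first variation of $a_g$ (Theorem~\ref{203.3}) to write $(\dd\dc a_g)(\lambda,\mubar)=2\sqrt{-1}\int_C\stackrel{\circ}{\Xi}\lambda$ and then computing the antiholomorphic variation $\stackrel{\circ}{\Xi}$ term by term. The identity then falls out of matching the resulting expressions in $\elll,\ellm,\llll,\lllm,\clll,\cllm$. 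You correctly anticipate in your last sentence that the second variation of $a_g$ is unavoidable; but once you grant that, the fiber-integration detour through $e^A$ contributes nothing, and the actual work is the direct computation the paper carries out in \S\S\ref{s-int}--\ref{s-sec}.
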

To prove the theorem, we compute explicitly 
$e^F_1$, $e^J_1$ and the second variation of $a_g$. 
The latter half of this paper is devoted to these computations. 
\par
As a corollary of these two theorems we obtain 
\begin{corollary}
$$
e^A - e^J = \frac{1}{(2g-2)^2}(e^F_1 - e^J_1) \in A^2(\Cg).
$$
\end{corollary}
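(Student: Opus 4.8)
The plan is to obtain the Corollary immediately from the two preceding theorems by transitivity, since both of them share the intermediate quantity $\frac{-2\sqrt{-1}}{2g(2g+1)}\dd\dc a_g$. The only subtlety is bookkeeping of the ambient spaces: the identity of Theorem \ref{300.1} lives in $A^2(\Cg)$, while that of Theorem \ref{603.2} lives in $A^2(\Mg)$, so I must transport the latter to $\Cg$ along the projection $\pi$.

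First I would observe that $a_g$ is a function on $\Mg$, so that in Theorem \ref{300.1} the symbol $\dd\dc a_g$ is to be read as $\dd\dc(\pi^* a_g)$ on $\Cg$. Since $\pi: \Cg \to \Mg$ is holomorphic, the Dolbeault operators $\dd$ and $\dc$ commute with the pullback $\pi^*$; hence applying $\pi^*$ to the identity of Theorem \ref{603.2} yields
$$
\frac{-2\sqrt{-1}}{2g(2g+1)}\dd\dc(\pi^* a_g) = \frac{1}{(2g-2)^2}\,\pi^*(e^F_1 - e^J_1) \in A^2(\Cg).
$$
By Theorem \ref{300.1} the left-hand side equals $e^A - e^J$, and suppressing the pullback $\pi^*$ in the notation for the base forms $e^F_1$ and $e^J_1$ gives the desired equality in $A^2(\Cg)$.

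There is no genuine obstacle here: all of the analytic content resides in the two theorems, whose proofs require computing the first and second variations of $a_g$ together with the explicit forms $e^F_1$ and $e^J_1$. The Corollary itself is purely formal once those are in hand, the only point to watch being the implicit identification of the forms $e^F_1$ and $e^J_1$ on $\Mg$ with their pullbacks to $\Cg$.
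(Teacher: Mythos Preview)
Your proposal is correct and matches the paper's approach: the Corollary is stated immediately after the two theorems with the phrase ``As a corollary of these two theorems we obtain,'' and no further argument is given. Your remark about pulling back along $\pi$ is a reasonable bit of extra care, but the paper treats this identification as tacit.
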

Thus the Chern form $e^A$ induced by the Arakelov-Green function 
is expressed in terms of differential forms induced
by the flat connection whose holonomy gives 
Johnson's homomorphisms on the mapping class group.\par

The author does not know any of further properties 
of the function $a_g$.
It would be interesting if we could find its explicit 
relations with other real-valued functions on $\Mg$
including Faltings' delta function \cite{F} and Hain-Reed's 
beta function \cite{HR}. 

\tableofcontents

\section{A real-valued function $a_g$ on the moduli space
$\moduli$.}\label{s-a}

We begin by recalling some basic notions on Riemann surfaces. 
In the second half of this section we define 
the real-valued function $a_g$ on the space $\moduli$. \par

Let $g \geq 1$ be an integer, 
$C$ a compact Riemann surface of genus $g$. \par

The Hodge $*$-operator $*: T^*_\bR C \to T^*_\bR C$ 
on the real cotangent bundle $T^*_\bR C$ depends only on the complex
structure of $C$. The $-\sqrt{-1}$-eigenspace in $\left(T^*_\bR
C\right)\otimes\bC$ is the holomorphic cotangent bundle
$T^*C$, and the $\sqrt{-1}$-eigenspace the antiholomorphic
cotangent bundle $\overline{T^*C}$. \par

We denote by $A^q(C)$ the complex-valued
$q$-currents on $C$ for $0 \leq q \leq 2$. 
The operator $*$ decomposes the space $A^1(C)$ into 
the $\pm \sqrt{-1}$-eigenspaces
$$
A^1(C) = A^{1, 0}(C) \oplus A^{0, 1}(C),
$$ 
where $A^{1, 0}(C)$ is the $-\sqrt{-1}$-eigenspace and 
$A^{0, 1}(C)$ the $\sqrt{-1}$-eigenspace. 
Throughout this paper we denote by  $\varphi'$ and $\varphi''$ 
the $(1, 0)$- and the $(0, 1)$-parts of $\varphi \in A^1(C)$, 
respectively, i.e., 
$$
\varphi = \varphi' + \varphi'', \quad 
*\varphi = -\sqrt{-1}\varphi' + \sqrt{-1}\varphi''.
$$
If $\varphi$ is harmonic, then $\varphi'$ is holomorphic and 
$\varphi''$ anti-holomorphic.\par

Let $H$ denote the complex first homology group of $C$,
$H_1(C; \bC)$, which admits the intersection pairing 
\begin{equation*} 
m: H\otimes H \to \bC, \quad
X\otimes Y \mapsto m(X\otimes Y) = X\cdot Y.
\end{equation*}
The dual $H^*$ is the first complex cohomology group 
$H^1(C; \bC)$. 
Consider the map $H^* \to A^1(C)$ assigning to each cohomology class the
harmonic $1$-form  representing it. The map can be regarded as a
$H$-valued  real harmonic $1$-form $\omegaone \in A^1(C)\otimes H$. \par
Let $\{X_i, X_{g+i}\}^g_{i=1}$ be a symplectic basis of 
$H$
$$
X_{i}\cdot X_{g+j} = \delta_{ij}, \quad
X_{i}\cdot X_{j} = X_{g+i}\cdot X_{g+j} = 0, \quad
1 \leq i, j \leq g,
$$
and  $\{\xi_i, \xi_{g+i}\}^g_{i=1} \subset A^1(C)$ 
the basis of the harmonic $1$-forms dual to  $\{X_i, X_{g+i}\}^g_{i=1}$. Then we have 
\begin{equation*}
\omegaone = \sum^g_{i=1}\xi_{i}X_{i} + \xi_{g+i}X_{g+i} 
\in A^1(C)\otimes H.
\end{equation*}
In particular, if $\{\psi_i\}^g_{i=1}$ is an orthonormal basis of the holomorphic $1$-forms
\begin{equation}
\frac{\sqrt{-1}}{2}\int_C\psi_i\wedge\overline{\psi_j} 
= \delta_{ij}, \quad 1 \leq i, j \leq g, 
\label{104.1}
\end{equation}
then we obtain
\begin{equation}
\omegaone = \sum^g_{i=1} \psi_iY_i + \overline{\psi_i}\overline{Y_i},
\label{104.2}
\end{equation}
where $\{Y_i, Y_{g+i}\}^g_{i=1} \subset H_\bC$ is the dual basis 
of the symplectic basis $\{[\psi_i],
\frac{\sqrt{-1}}{2}[\overline{\psi_i}]\}^g_{i=1}$ 
of $H^* = H^1(C; \bC)$.\par

Since the complete linear system of the canonical divisor 
on the complex algebraic curve $C$ has no basepoint, 
the $2$-form 
\begin{equation}
B := \frac{1}{2g}m(\omegaone\wedge\omegaone) = \frac{1}{2g}\omegaone\cdot\omegaone = 
\frac{\sqrt{-1}}{2g}\sum^g_{i=1}\psi_i\wedge\overline{\psi_i} 
\label{105.1}
\end{equation}
is a volume form on $C$ with $\int_CB = 1$.\par

We denote by $\hatPhi = \hatPhi_C: A^2(C) \to A^0(C)$ the Green operator with respect to the volume form $B$.
We have
\begin{eqnarray}
d\ast d\hatPhi(\Omega) &=& \Omega - (\int_C\Omega)B,\label{106.1}\\
\int_C\hatPhi(\Omega)B &=& 0\label{106.2}
\end{eqnarray}
for any $\Omega \in A^2(C)$. 
The Hodge decomposition on the $1$-currents on $C$ 
is given by 
\begin{equation}
\varphi = \harmonic\varphi + \ast d\hatPhi d\varphi + d\hatPhi d\ast \varphi
\label{106.3}\end{equation}
for any $\varphi \in A^1(C)$.
Here $\harmonic$ is the harmonic projection and 
satisfies 
\begin{equation}
\harmonic \varphi =
-\omegaone\cdot\left(\int_C\omegaone\wedge\varphi\right) 
= -\left(\int_C\varphi\wedge\omegaone\right)
\cdot\omegaone.
\label{106.4}\end{equation}
If $\varphi'$ is a $(1, 0)$-current, then 
\begin{equation}
\varphi' = \harmonic\varphi' + 2\ast\dd\Psi\dc\varphi'
= \harmonic\varphi' - 2\sqrt{-1}\dd\Psi\dc\varphi'.
\label{106.5}
\end{equation}

Let $\delta_{P_0} \in A^2(C)$ be the delta current at $P_0 \in C$. 
We define 
\begin{equation}
h = h_{P_0} := - \hatPhi(\delta_{P_0}).
\label{107.1}\end{equation}
$G(P_0, P_1) := \exp(-4\pi h_{P_0}(P_1))$ is the 
Arakelov-Green function. We have
\begin{equation*}
\frac{1}{2\pi\sqrt{-1}}\dd\dc\log G(P_0, \empty) 
= B - \delta_{P_0}.
\end{equation*}
If $\Omega$ is a smooth $2$-form on $C$, then 
$\hatPhi(\Omega)$ is smooth. We have 
\begin{equation*}
\int_C\hatPhi(\Omega')\Omega = \int_C\Omega'\hatPhi(\Omega)
\end{equation*}
for any $\Omega' \in A^2(C)$. In particular, 
if $\Omega' = \delta_{P_0}$, then 
\begin{equation*}
\hatPhi(\Omega)(P_0) = - \int_Ch_{P_0}\Omega = 
\frac1{4\pi}\int_C\log G(P_0, \empty)\Omega.
\end{equation*}

Now we introduce the function $a_g$ on the moduli space 
$\moduli$. For $[C] \in \moduli$ we define
\begin{equation*}
a_g(C) := (m\otimes m)\int_C\omegaone\hatPhi(\omegaone\wedge\omegaone)\omegaone
= \int_C\omegaone\cdot\hatPhi(\omegaone\wedge\omegaone)\cdot\omegaone.
\end{equation*}
From (\ref{104.2})
\begin{eqnarray*}
a_g(C) &=& -\sum^g_{i, j=1}\int_C\psi_i\wedge\overline{\psi_j}\,
\hatPhi(\overline{\psi_i}\wedge\psi_j)
\\
&=& -\frac{1}{4\pi}\sum^g_{i, j=1}\int_{(P_0, P_1) \in C\times C}(\psi_i\wedge\overline{\psi_j})_{P_0}\,
\log G(P_0, P_1)(\overline{\psi_i}\wedge\psi_j)_{P_1}.
\end{eqnarray*}
It is easy to see 
\begin{lemma}\label{108.4} We have 
$$
\int_C\Omega\hatPhi(\overline{\Omega}) \leq 0
$$
for any smooth $2$-form $\Omega$. 
Moreover we have $\int_C\Omega\hatPhi(\overline{\Omega}) = 0$ 
if and only if $\Omega \in \bC B$.
\end{lemma}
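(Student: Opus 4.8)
The plan is to prove Lemma~\ref{108.4} by reducing it to the spectral properties of the Green operator $\hatPhi$. The key observation is that $\hatPhi$ is essentially the inverse of the (positive) Laplacian on the orthogonal complement of the constants, so the quadratic form $\Omega \mapsto \int_C \Omega\,\hatPhi(\overline{\Omega})$ should be negative semidefinite, with degeneracy exactly along the kernel of the ``projection off the constants''. Concretely, given a smooth $2$-form $\Omega$, I would write $\Omega = c\,B + \Omega_0$ where $c = \int_C \Omega$ and $\Omega_0 = \Omega - (\int_C\Omega)B$ has total integral zero. By (\ref{106.1}) we have $d*d\hatPhi(\Omega) = \Omega_0$, i.e. $\hatPhi(\Omega)$ is the (normalized) solution $u$ of the Poisson-type equation $d*du = \Omega_0$, and by (\ref{106.2}) the solution is normalized by $\int_C u\,B = 0$.

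First I would compute $\int_C \Omega\,\hatPhi(\overline{\Omega})$ by setting $u := \hatPhi(\overline{\Omega})$, so that $d*du = \overline{\Omega} - (\int_C\overline{\Omega})B = \overline{\Omega_0}$. Using $\int_C B\,u = 0$ (from (\ref{106.2})), the $cB$ part of $\Omega$ drops out and we get $\int_C \Omega\,\hatPhi(\overline{\Omega}) = \int_C \Omega_0\, u = \int_C (d*du)\,\overline{u} = \int_C d*du \cdot \overline{u}$, where I have used that $\overline{\Omega_0} = d*du$ forces $\Omega_0 = \overline{d*du} = d*d\overline{u}$. Integrating by parts on the closed surface $C$ (no boundary terms), this becomes $-\int_C *du \wedge d\overline{u} = -\int_C du\wedge *d\overline{u}$ after reindexing. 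The quantity $\int_C du\wedge *d\overline{u}$ is, up to the standard sign convention for the Hodge inner product, the squared $L^2$-norm $\|du\|^2 \geq 0$ of the $1$-form $du$, giving $\int_C \Omega\,\hatPhi(\overline{\Omega}) = -\|du\|^2 \leq 0$. This establishes the inequality.

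For the equality case, $\int_C \Omega\,\hatPhi(\overline{\Omega}) = 0$ holds iff $\|du\|^2 = 0$, i.e. $du \equiv 0$, i.e. $u = \hatPhi(\overline{\Omega})$ is constant on the connected surface $C$. But $\int_C u\,B = 0$ by (\ref{106.2}), so the constant is zero, i.e. $\hatPhi(\overline{\Omega}) = 0$. Feeding this back into $\overline{\Omega_0} = d*d\hatPhi(\overline{\Omega}) = 0$ gives $\Omega_0 = 0$, hence $\Omega = (\int_C\Omega)B \in \bC B$. Conversely, if $\Omega = cB$ then $\overline{\Omega} = \overline{c}B$ has $\hatPhi(\overline{\Omega}) = 0$ by (\ref{106.1}) and (\ref{106.2}) (the defining equations force $\hatPhi(B) = 0$), so the integral vanishes.

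The main obstacle I anticipate is bookkeeping of the sign and normalization conventions: the precise form of the Hodge inner product as written via $*$ in this paper, and making sure the integration by parts on a closed surface produces exactly $-\|du\|^2$ rather than $+\|du\|^2$. One must track the paper's specific sign convention (recorded in the eigenspace decomposition $*\varphi = -\sqrt{-1}\varphi' + \sqrt{-1}\varphi''$) to confirm that $\int_C d\alpha \wedge *d\overline{\alpha}$ is genuinely nonnegative. Everything else is a routine application of (\ref{106.1}) and (\ref{106.2}) together with the fact that a closed connected surface has no boundary and its only harmonic $0$-forms are constants; the smoothness hypothesis on $\Omega$ guarantees $u = \hatPhi(\overline{\Omega})$ is smooth so the integrations by parts are justified.
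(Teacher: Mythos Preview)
Your proposal is correct and takes essentially the same approach as the paper: integrate by parts to express $\int_C\Omega\,\hatPhi(\overline\Omega)$ as minus a manifestly nonnegative $L^2$-norm of the differential of $\hatPhi(\overline\Omega)$, then read off the equality case from the fact that $d\hatPhi(\overline\Omega)=0$ forces $\hatPhi(\overline\Omega)$ to be constant and hence $\Omega\in\bC B$ via (\ref{106.1}). The only cosmetic difference is that the paper carries out the integration by parts in the $\partial,\overline\partial$ decomposition, obtaining
\[
\int_C\Omega\,\hatPhi(\overline\Omega)=-\sqrt{-1}\int_C\partial\hatPhi(\Omega)\wedge\overline\partial\,\hatPhi(\overline\Omega)-\sqrt{-1}\int_C\partial\hatPhi(\overline\Omega)\wedge\overline\partial\,\hatPhi(\Omega),
\]
which is the same quantity you write as $-\int_C du\wedge *d\overline u$ once one uses $*=-\sqrt{-1}$ on $(1,0)$-forms and $*=\sqrt{-1}$ on $(0,1)$-forms.
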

\begin{proof}
By a straightforward computation we have
$$
\int_C\Omega\hatPhi(\overline{\Omega}) 
= -\sqrt{-1}\int_C \dd\hatPhi(\Omega)
\wedge\dc\hatPhi(\overline{\Omega})
- \sqrt{-1}\int_C \dd\hatPhi(\overline{\Omega})
\wedge\dc\hatPhi(\Omega).
$$
This implies the first half of the lemma.\par
Assume $\int_C\Omega\,\hatPhi(\overline{\Omega}) = 0$. 
Then $\dd\hatPhi(\Omega) = \dc\hatPhi(\Omega)
= 0$, so that $\hatPhi(\Omega)$ is constant.
From (\ref{106.1}) we obtain $\Omega = (\int_C\Omega)B \in \bC B$.
\end{proof}

It follows from the lemma
\begin{corollary}\label{108.5}
$$
a_g(C) > 0, \quad\mbox{if $g \geq 2$.}
$$
\end{corollary}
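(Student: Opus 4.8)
The plan is to recognize $a_g(C)$ as a nonnegative combination of exactly the quadratic expressions controlled by Lemma~\ref{108.4}, and then to settle the equality case. First I would set $\Omega_{ij} := \psi_i\wedge\overline{\psi_j}$ and note that complex conjugation gives $\overline{\psi_i}\wedge\psi_j = \overline{\Omega_{ij}}$. Substituting this identity into the displayed formula for $a_g(C)$ rewrites it as
$$
a_g(C) = -\sum_{i,j=1}^g\int_C\Omega_{ij}\,\hatPhi(\overline{\Omega_{ij}}),
$$
so that, up to an overall sign, $a_g(C)$ is precisely a sum of the integrals $\int_C\Omega\,\hatPhi(\overline{\Omega})$ appearing in the lemma.

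Next I would apply Lemma~\ref{108.4} termwise. Since each $\Omega_{ij}$ is a smooth $2$-form, the lemma gives $\int_C\Omega_{ij}\,\hatPhi(\overline{\Omega_{ij}})\le 0$, so every summand $-\int_C\Omega_{ij}\,\hatPhi(\overline{\Omega_{ij}})$ is nonnegative and hence $a_g(C)\ge 0$. This yields the weak inequality with no further effort; the remaining task is to upgrade it to strict positivity when $g\ge 2$.

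For strictness I would use the sharp part of Lemma~\ref{108.4}, namely that a summand vanishes if and only if the corresponding $\Omega_{ij}$ lies in $\bC B$. It then suffices to exhibit a single index with $\Omega_{ij}\notin\bC B$, and the diagonal form $\Omega_{11}=\psi_1\wedge\overline{\psi_1}$ serves: when $g\ge 2$ the canonical bundle $K_C$ has positive degree $2g-2$, so the holomorphic $1$-form $\psi_1$ must vanish at some point $P\in C$, whereas $B$ is nowhere zero. Evaluating a hypothetical relation $\psi_1\wedge\overline{\psi_1}=cB$ at $P$ would force $c=0$ and hence $\psi_1\equiv 0$, a contradiction. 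Thus $\Omega_{11}\notin\bC B$, the $(1,1)$-summand is strictly positive, and $a_g(C)>0$.

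The only genuinely delicate point is the last one: converting ``equality in Lemma~\ref{108.4}'' into something I can refute. The main obstacle is therefore geometric rather than analytic, namely ruling out that the nowhere-vanishing volume form $B$ is proportional to a form built from holomorphic $1$-forms that are constrained to have zeros; the degree count $\deg K_C=2g-2>0$ for $g\ge 2$ resolves it directly. As a sanity check, the same count shows the argument must break at $g=1$, where $\psi_1$ is nowhere vanishing and $B=\frac{\sqrt{-1}}{2}\psi_1\wedge\overline{\psi_1}\in\bC B$, consistent with the hypothesis $g\ge 2$ in the statement.
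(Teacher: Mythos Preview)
Your proof is correct and follows exactly the approach the paper intends: the paper's own proof is just the one line ``It follows from the lemma,'' and you have spelled out the obvious way to apply Lemma~\ref{108.4} termwise to the expression $a_g(C)=-\sum_{i,j}\int_C\Omega_{ij}\hatPhi(\overline{\Omega_{ij}})$ together with the observation that some $\Omega_{ij}$ (e.g.\ $\Omega_{11}$) is not a multiple of $B$ when $g\ge 2$. Your zero-of-$\psi_1$ argument for the equality case is the natural one, and your remark that it collapses at $g=1$ matches the paper's observation $a_1(C)=0$.
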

In the case $g=1$ we have $a_1(C) = 0$.

\section{The first variation of the function $a_g$.}\label{s-fv}
In this section we study the first variations of the funcion $a_g$
and the Green function $h$.\par

Let $C$ be a compact Riemann surface of genus $g \geq 1$. 
We define the map $M: H^{\otimes 4}\to \bC$ by 
\begin{equation*}
M(Z_1Z_2Z_3Z_4) := (m\otimes m)(Z_2Z_3Z_4Z_1) = 
 (Z_2\cdot Z_3)(Z_4\cdot Z_1)
\end{equation*}
for $Z_i \in H$. Here and throughout this paper we omit 
the symbol $\otimes$ frequently, so that we write simply 
$Z_1Z_2Z_3Z_4$ for $Z_1\otimes Z_2\otimes Z_3\otimes Z_4$. 
Then 
\begin{equation*}
a_g(C) = -M\int_C\hatPhi(\omegaone\wedge\omegaone)
\omegaone\wedge\omegaone
\end{equation*}
Let $I \in H^{\otimes 2}$ denote the intersection form. 
If $\{X_i, X_{g+i}\}^g_{i=1}\subset H$ is a symplectic basis, then 
\begin{equation}
I = \sum^g_{i=1}X_{i} X_{g+i} - X_{g+i} X_{i}.
\label{201.3}
\end{equation}
For $Z, Z_1, Z_2 \in H$ it is easy to see
\begin{eqnarray}
&&(m\otimes 1_H)(ZI) = (1_H\otimes m)(IZ) = -Z,\label{201.4}\\
&& M(IZ_1Z_2) = M(Z_1Z_2I) = m(Z_1Z_2) = Z_1\cdot Z_2.\label{201.5}
\end{eqnarray}
We denote by $\tast: H \to H$ the transpose of the Hodge $*$-operator on $H^* = H^1(C; \bC)$. 
It is clear that 
\begin{equation}
(\tast Z_1)\cdot (\tast Z_2) = Z_1\cdot Z_2, \quad 
\forall Z_1, \forall Z_2 \in H.
\label{201.6}
\end{equation}
We have 
$\tast Y_i = -\sqrt{-1}Y_i$ for the symplectic basis 
$\{Y_i, Y_{g+i}\}^g_{i=1}$ in (\ref{104.2}). 
Let $H'$ and $H''$ be the $-\sqrt{-1}$- and 
the $\sqrt{-1}$- eigenspaces of the operator 
$\tast$, respectively. 
These are isotropic subspaces of $H$. 
We have
\begin{equation}
\omegaone = \omega'_{(1)} + \omega''_{(1)}, \quad
\omega'_{(1)} = \overline{\omega''_{(1)}} = \sum^g_{i=1}\psi_iY_i 
\in A^1(C)\otimes H',
\label{201.7}
\end{equation}
where $\{\psi_i\}^g_{i=1}$ is the orthonormal basis 
of the holomorphic $1$-forms in (\ref{104.1}). In particular,
\begin{equation}
\ast\omegaone = \tast\omegaone.
\label{201.8}
\end{equation}

Consider a $C^\infty$ family of compact Riemann surfaces 
$C_t$, $t \in \bR$, $\vert t\vert\ll 1$, with 
$C_0 = C$. The family $\{C_t\}$ is trivial as a 
$C^\infty$ fiber bundle over an interval near $t=0$, 
so that we obtain a $C^\infty$ family of $C^\infty$ diffeomorphisms 
$f^t: C\to C_t$ with $f^0 = 1_C$.  
In general, if $\bigcirc = \bigcirc_t$
is a ``function" in 
$t \in \bR$, $\vert t \vert \ll 1$, then we write simply 
\begin{equation}
\mathop\bigcirc^{\centerdot} := 
\frac{d}{dt}\Bigr\vert_{t=0}\bigcirc_t.
\label{202.1}\end{equation}
For example, we denote
$$
(\mu(f))^\centerdot := \frac{d}{dt}\Bigr\vert_{t=0}\mu(f^t).
$$
Here $\mu(f^t)$ is the complex dilatation of the diffeomorphism $f^t$. 
Let $z$ be a complex coordinate on $C$, and $\zeta$ on $C_t$. 
The complex dilatation $\mu(f^t)$ is defined locally by 
\begin{equation*}
\mu(f^t) = \mu(f^t)(z)\frac{d}{dz}\otimes d\overline{z} = 
\frac{(\zeta\circ
f^t)_{\overline{z}}}{(\zeta\circ
f^t)_{z}}\frac{d}{dz}\otimes d\overline{z},
\end{equation*}
which does not depend on the choice of the coordinates $z$ and $\zeta$. 
In this paper we write more simply
$$
\mu = (\mu(f))^\centerdot.
$$
The Dolbeault cohomology class $[\mu] \in H^1(C; \mathcal{O}_C(TC))$ 
is exactly the tangent vector $\frac{d}{dt}\bigr\vert_{t=0}[C_t] \in
T_{[C]}\moduli$. \par

We define a linear operator $S = S[\mu]: A^1(C) \to A^1(C)$ by 
$$
S(\varphi) = S(\varphi') + S(\varphi'') :=
-2\varphi'\mu - 2\varphi''\overline{\mu},
$$
for $\varphi = \varphi' + \varphi''$,  $\varphi' \in A^{1, 0}(C)$, 
$\varphi'' \in A^{0, 1}(C)$. 
The first variation of $\ast_t := {f^t}^*\ast_{C_t}$ is given by 
\begin{equation}
\stardot = *S = -S*: A^1(C) \to A^1(C).
\label{202.2}
\end{equation}
\cite[\S7]{Kaw2}.\par
As to the harmonic form $\omega^t_{(1)} := {f^t}^*\omega^{C_t}_{(1)}$, we have 
\begin{lemma}\label{202.3}
$$
\omegaonedot = -d\hatPhi d* S\omegaone.
$$
\end{lemma}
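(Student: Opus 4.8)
The plan is to compute the first variation of the harmonic representative $\omega_{(1)}^t$ directly, using the Hodge decomposition \eqref{106.3} as the organizing tool. By construction $\omega_{(1)}^t = {f^t}^*\omega_{(1)}^{C_t}$ is harmonic with respect to the pulled-back complex structure, i.e.\ with respect to $\ast_t$, and it represents a fixed cohomology class in $H^*\otimes H$ (the cohomology class $\omega_{(1)}$ represents is the tautological isomorphism $H^*\cong H^*$, which does not vary in $t$). The key observation is therefore that $d\omega_{(1)}^t = 0$ for all $t$ (closedness is topological, unaffected by the complex structure), so differentiating gives $d\,\omegaonedot = 0$; and the cohomology class of $\omega_{(1)}^t$ is constant, so $\omegaonedot$ is \emph{exact}. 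That already forces $\harmonic\,\omegaonedot$ to vanish and kills the middle term $\ast d\hatPhi d\,\omegaonedot$ of the Hodge decomposition applied to $\omegaonedot$, leaving $\omegaonedot = d\hatPhi d\ast_0\,\omegaonedot$.

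The remaining step is to identify $d\ast_0\,\omegaonedot$. Here I would use the co-closedness of the harmonic form: $\ast_t\,\omega_{(1)}^t$ is closed for every $t$ because $\omega_{(1)}^t$ is harmonic (equivalently co-closed) with respect to $\ast_t$. Differentiating $d(\ast_t\,\omega_{(1)}^t)=0$ at $t=0$ and using the product rule gives
\begin{equation*}
d\bigl(\stardot\,\omegaone\bigr) + d\bigl(\ast_0\,\omegaonedot\bigr) = 0,
\end{equation*}
so that $d\ast_0\,\omegaonedot = -d\,\stardot\,\omegaone$. Substituting the formula \eqref{202.2} for $\stardot$, namely $\stardot = \ast S$, converts this into $d\ast_0\,\omegaonedot = -d\ast S\omegaone$. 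Feeding this back into the surviving term of the Hodge decomposition yields
\begin{equation*}
\omegaonedot = d\hatPhi\,d\ast_0\,\omegaonedot = -\,d\hatPhi\,d\ast S\omegaone,
\end{equation*}
which is exactly the asserted identity once one checks that $\ast_0 = \ast$ is the Hodge operator on the undeformed surface $C_0=C$.

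The main obstacle I anticipate is bookkeeping the two distinct roles of the star operator at $t=0$ versus the varying $\ast_t$, and making sure the Leibniz differentiation of $\ast_t\,\omega_{(1)}^t$ is applied to the correct object. One must be careful that $\omega_{(1)}^t$ is harmonic for $\ast_t$ (not for the fixed $\ast$), so that the co-closed condition reads $d\ast_t\,\omega_{(1)}^t = 0$ rather than $d\ast\,\omega_{(1)}^t=0$; it is precisely the variation of $\ast_t$ that produces the operator $S$ and hence the dependence on the Beltrami differential $\mu$. A secondary technical point is justifying that one may apply the Hodge decomposition and the Green operator $\hatPhi$ to the current $\omegaonedot$ and that $\omegaonedot$ is exact; the exactness follows from constancy of the cohomology class, and the regularity is unproblematic since everything in sight is smooth in $t$ and in the surface variables. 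Once these identifications are in place, the computation is short and the formula drops out.
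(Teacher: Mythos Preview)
Your proof is correct and follows essentially the same argument as the paper: both note that $\omegaonedot$ is exact (the paper simply writes $\omegaonedot = du$), differentiate the co-closedness $d\ast_t\omega^t_{(1)}=0$ to obtain $d\ast\omegaonedot = -d\ast S\omegaone$, and then invert with the Green operator. Your use of the Hodge decomposition \eqref{106.3} to isolate the surviving term is just a slightly more explicit rephrasing of the paper's direct substitution $u \equiv -\hatPhi d\ast S\omegaone$.
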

\begin{proof}
Since $\omega^t_{(1)}$ is cohomologous to $\omega^0_{(1)}$, 
there exists a function $u$ such that $\omegaonedot =
du$. Differentiating 
$d\ast_t\omega^t_{(1)} = 0$, we get
$$
d\ast du = d\ast \omegaonedot = 
-d\stardot\omegaone = 
- d\ast S\omegaone.
$$
Hence $u \equiv -\hatPhi d* S\omegaone$ modulo the constant functions.
\end{proof}

In this paper we write 
\begin{eqnarray*}
&&\Omega_0 := \omegaone\wedge\omegaone \in A^2(C)\otimes H^{\otimes 2}
\\
&&K_0 := \hatPhi(\Omega_0) = \hatPhi(\omegaone\wedge\omegaone) \in
A^0(C)\otimes H^{\otimes 2}
\\
&&\nu_0 := \ast\dd K_0 = \ast\dd\hatPhi(\Omega_0) \in A^{1,
0}(C)\otimes H^{\otimes 2}.
\end{eqnarray*}
The first variation of $a_g$ is given by the following.
\begin{theorem}\label{203.3}
Let $\Xi$ be the quadratic differential defined by
\begin{eqnarray*}
\Xi &:=& M\left((\ast\dd K_0)
(\ast\dd K_0)
+4(\ast\dd\hatPhi(\ast d K_0\wedge\omegaone))\omega'_{(1)}\right)
\end{eqnarray*}
Then we have
\begin{equation*}
\mathop{a_g}^\centerdot = 
-\Re\left(4\sqrt{-1}\int_C\Xi\,\mu\right).
\end{equation*}
\end{theorem}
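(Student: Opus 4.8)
The plan is to differentiate the expression for $a_g(C)$ directly, tracking how each piece depends on $t$, and then collect terms using the variational formulas already established in the excerpt. We have
$$
a_g(C) = -M\int_C\hatPhi(\omegaone\wedge\omegaone)\,\omegaone\wedge\omegaone
= -M\int_C K_0\,\Omega_0,
$$
where $K_0 = \hatPhi(\Omega_0)$ and $\Omega_0 = \omegaone\wedge\omegaone$. The quantities that vary with $t$ are the harmonic form $\omegaone$, the Hodge star $\ast$ (which enters through the Green operator $\hatPhi$), and the volume form $B$. So $\mathop{a_g}^\centerdot$ will be a sum of three types of contribution: one from $\omegaonedot$, one from the variation of $\hatPhi$, and one from the variation of the metric/volume form implicit in $\hatPhi$.

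First I would handle the variation of $\omegaone$ using Lemma \ref{202.3}, which gives $\omegaonedot = -d\hatPhi d\ast S\omegaone$. The two copies of $\omegaone$ appearing in $\Omega_0$ and the copies inside $K_0$ each contribute such a term. For these I would integrate by parts, pushing the $d$ off of $\omegaonedot$ and using the self-adjointness of $\hatPhi$ (the identity $\int_C\hatPhi(\Omega')\Omega = \int_C\Omega'\hatPhi(\Omega)$ stated in the excerpt) to move the Green operator across. The defining relation $d\ast d\hatPhi(\Omega) = \Omega - (\int_C\Omega)B$ from (\ref{106.1}) lets me simplify $d\ast d K_0$ back to $\Omega_0$ up to a multiple of $B$; terms with $B$ should drop out after applying $M$ and using the isotropy/intersection identities (\ref{201.4})--(\ref{201.6}).

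Next I would deal with the variation of $\hatPhi$ itself. Since $\hatPhi$ is the Green operator for the Hodge Laplacian, differentiating the relation (\ref{106.1}) and isolating $\hatPhi^{\centerdot}$ gives an expression for how $\hatPhi$ changes in terms of $\stardot = \ast S = -S\ast$ from (\ref{202.2}). Substituting this and again using self-adjointness should convert $\nu_0 = \ast\dd K_0$ and the operator $S$ into the shape appearing in $\Xi$. I expect the factor of $4$ and the cross term $(\ast\dd\hatPhi(\ast d K_0\wedge\omegaone))\omega'_{(1)}$ to emerge precisely from combining the $\stardot$-variation with the $S\omegaone = -2\omega'_{(1)}\mu - 2\omega''_{(1)}\overline{\mu}$ substitution, after extracting the $(1,0)$-part $\omega'_{(1)}$ via (\ref{201.7}) and (\ref{201.8}).

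\textbf{The main obstacle} will be the careful bookkeeping of the tensor factors in $H^{\otimes 2}$ and ensuring that, after pairing everything against $\mu$, the result is a genuine quadratic differential $\Xi$ contracted against $\mu$, with the reality/conjugation matching $\Re(4\sqrt{-1}\int_C\Xi\,\mu)$. Because $\mu$ is a $(-1,0)$-form-valued object (a Beltrami differential) and $\Xi$ is a quadratic differential, only the pairing that produces a genuine $2$-form survives integration; I would need to check that the $(0,1)$-contributions either vanish by type or assemble into the complex conjugate, so that taking $\Re$ recovers exactly the stated formula. Tracking the conjugation through $\overline{\mu}$ versus $\mu$ and through $\omega'_{(1)} = \overline{\omega''_{(1)}}$ is the delicate point; the application of $M$, together with the symplectic identities, is what ultimately collapses the many terms into the two summands displayed in $\Xi$.
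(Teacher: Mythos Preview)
Your plan is essentially the paper's approach. The paper organizes the same computation by first proving a general formula for $(\int_C K\Omega)^\centerdot$ when $K=\hatPhi(\Omega)$ (yielding three terms: from $\mathop{\Omega}^\centerdot$, from $\mathop{B}^\centerdot$, and from $d\stardot dK$), then handles each via Lemma \ref{202.3} and the vanishing $M(K_0 I)=0$, and collapses everything to the $\Re(4\sqrt{-1}\int_C\Xi\mu)$ form using the identity $\int_C\ast\theta\wedge S\varphi = \Re\bigl(4\sqrt{-1}\int_C\theta'\varphi'\mu\bigr)$ for real $\T$-valued $1$-forms $\theta,\varphi$ --- exactly the reality bookkeeping you flag as the delicate point.
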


We denote by 
$$
\T := \prod^\infty_{m=0}H^{\otimes m}
$$
the completed tensor algebra generated by $H$. 
Let $\Omega = \{\Omega^t\}$, $t \in \bR$, 
$\vert t\vert\ll1$, be a family of $2$-forms with values in 
the algebra $\T$. Assume $A := \int_C\Omega \in \T$ is constant, 
and denote $K := \hatPhi(\Omega)$. Then we have

\begin{lemma}\label{204.1}
$$
(\int_CK\Omega)^\centerdot = \int_C(\mathop{\Omega}^\centerdot K +
K\mathop{\Omega}^\centerdot) - \int_C(\mathop{B}^\centerdot AK +
K\mathop{B}^\centerdot A) - \int_C(d\stardot dK)K.
$$
\end{lemma}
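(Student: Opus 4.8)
The goal is to differentiate the scalar quantity $\int_C K\Omega$ at $t=0$, where $\Omega=\Omega^t$ is a $\T$-valued $2$-form, $A=\int_C\Omega$ is constant in $t$, and $K=\hatPhi(\Omega)$. The plan is to express the derivative purely in terms of $\mathop{\Omega}^\centerdot$, $\mathop{B}^\centerdot$, and $\stardot$, since these are the only quantities varying with $t$. First I would write $\int_C K\Omega = \int_{C_t}\hatPhi_{C_t}(\Omega^t)\Omega^t$ after pulling everything back by $f^t$, so that the integration domain is fixed and only the integrand and the operator $\hatPhi$ depend on $t$. The product rule then splits the derivative into a term where $\Omega$ is differentiated and a term where $K=\hatPhi(\Omega)$ is differentiated.

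For the first kind of term, Leibniz gives $\int_C(\mathop{\Omega}^\centerdot K + K\mathop{\Omega}^\centerdot)$, but one must remember that the measure itself is $t$-dependent through the volume form hidden in $\hatPhi$; more precisely the correction enters through the normalization condition $\int_C \hatPhi(\Omega) B = 0$ from (\ref{106.2}). The hard part will be handling $\mathop{K}^\centerdot = \mathop{\hatPhi(\Omega)}^\centerdot$, since $\hatPhi$ is the Green operator for the $t$-varying volume form $B^t$ and does not commute with $\frac{d}{dt}$. I would differentiate the defining relation $d\ast d K = \Omega - A B$ from (\ref{106.1}). Applying $\frac{d}{dt}$ and using that $A$ is constant yields $d\stardot dK + d\ast d\mathop{K}^\centerdot = \mathop{\Omega}^\centerdot - A\mathop{B}^\centerdot$, which I can solve for the Green-operator part of $\mathop{K}^\centerdot$ after projecting off the constants.

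The key computational step is to insert this expression for $\mathop{K}^\centerdot$ into $\int_C \mathop{K}^\centerdot\,\Omega$ and integrate by parts using the self-adjointness of $\hatPhi$ recorded just after (\ref{107.1}), namely $\int_C\hatPhi(\Omega')\Omega = \int_C\Omega'\hatPhi(\Omega)$. This moves $\hatPhi$ back onto $\Omega$, converting $\int_C\mathop{K}^\centerdot\Omega$ into an expression involving $\int_C(d\stardot dK)K$ together with the constant-projection corrections that produce the $\mathop{B}^\centerdot$ terms $\int_C(\mathop{B}^\centerdot AK + K\mathop{B}^\centerdot A)$. The sign on $\int_C(d\stardot dK)K$ comes out negative because that term is subtracted when solving for $\mathop{K}^\centerdot$ from the differentiated Green identity.

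The main obstacle I anticipate is bookkeeping the constant-function ambiguity: $\hatPhi$ is only determined up to the normalization (\ref{106.2}), so when I differentiate I must carefully track how the constant shifts, as these are exactly what generate the two $\mathop{B}^\centerdot$ terms rather than spurious contributions. A clean way to organize this is to verify the formula against the $\mathop{B}^\centerdot$-dependence directly: since $B^t$ integrates to $1$ for all $t$, one has $\int_C\mathop{B}^\centerdot=0$, which constrains and simplifies the correction terms. Once the adjointness manipulation and the constant tracking are done correctly, collecting the three groups of terms gives the stated identity.
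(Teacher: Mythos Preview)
Your proposal is correct and follows essentially the same approach as the paper: apply the product rule, differentiate the defining Green equation (\ref{106.1}) to obtain $d\ast d\mathop{K}^\centerdot = \mathop{\Omega}^\centerdot - \mathop{B}^\centerdot A - d\stardot dK$, and use the normalization (\ref{106.2}) together with integration by parts. The paper's only organizational shortcut is that it substitutes $\Omega = d\ast dK + BA$ into $\int_C\mathop{K}^\centerdot\Omega$ and moves $d\ast d$ across by integration by parts directly, rather than first inverting to solve for $\mathop{K}^\centerdot$ via $\hatPhi$; this sidesteps the constant-tracking you (rightly) flagged as the main bookkeeping hazard.
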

\begin{proof}
Since $\Omega = d\ast dK + BA$ and $\int_CKB = 0$, 
we have 
\begin{eqnarray*}
&&\int_C \mathop{K}^\centerdot\Omega 
= \int_C \mathop{K}^\centerdot d\ast dK 
+ \int_C \mathop{K}^\centerdot BA
= \int_C (d\ast d\mathop{K}^\centerdot) K 
- \int_C K \mathop{B}^\centerdot A\\
&=& \int_C(\mathop{\Omega}^\centerdot - \mathop{B}^\centerdot A 
- d\stardot dK)K - \int_CK\mathop{B}^\centerdot A.
\end{eqnarray*}
Hence 
\begin{eqnarray*}
&&(\int_CK\Omega)^\centerdot
= \int_C\mathop{K}^\centerdot\Omega 
+ \int_C K\mathop{\Omega}^\centerdot\\
&=& \int_C(\mathop{\Omega}^\centerdot K +
K\mathop{\Omega}^\centerdot) - \int_C(\mathop{B}^\centerdot AK +
K\mathop{B}^\centerdot A) - \int_C(d\stardot dK)K,
\end{eqnarray*}
as was to be shown.
\end{proof}

Let $\theta$ and $\varphi$ be real $1$-forms on $C$ with values in $\T$. 
Then we have
\begin{equation}
\int_C\ast \theta\wedge S\varphi 
= \Re\left(4\sqrt{-1}\int_C\theta'\varphi'\mu\right).
\label{205.1}
\end{equation}
In fact, we have 
\begin{eqnarray*}
&&\int_C\ast\theta\wedge S\varphi 
= -2\int_C\ast\theta\wedge(\varphi'\mu 
+ \varphi''\overline{\mu})
= 2\sqrt{-1}\int_C(\theta'-\theta'')\wedge
(\varphi'\mu + \varphi''\overline{\mu})\\
&=& 2\sqrt{-1}\int_C(\theta'\varphi')\mu
- 2\sqrt{-1}\int_C(\theta''\varphi'')\overline{\mu}
= \Re\left(4\sqrt{-1}\int_C(\theta'\varphi')\mu\right).
\end{eqnarray*}

\begin{proof}[Proof of Theorem \ref{203.3}]
Since $H'$ and $H''$ are isotropic in $H$, 
\begin{equation*}
M(\ast\dd\hatPhi(\nu_0\wedge\omega''_{(1)})\omega'_{(1)})
= M(\ast\dd\hatPhi(\ast d\hatPhi(\Omega_0)\wedge\omegaone)\omega'_{(1)}).
\end{equation*}
Hence we have 
\begin{equation}
\Xi = M\left(\nu_0\nu_0
+4(\ast\dd\hatPhi(\nu_0\wedge\omega''_{(1)}))\omega'_{(1)}\right)
\label{206.2}
\end{equation}
Now applying Lemma \ref{204.1} to $a_g = -M\int_CK_0\Omega_0$, 
we have 
\begin{equation}
-\mathop{a_g}^\centerdot = 2M\int_C K_0\mathop{\Omega_0}^\centerdot
- 2M\int_C\mathop{B}^\centerdot K_0I
-M\int_C(d\stardot dK_0)K_0.
\label{206.3}\end{equation}
The first term of the RHS in (\ref{206.3}) is 
\begin{eqnarray*}
&& 2M\int_C\hatPhi(\Omega_0)\omegaonedot\wedge\omegaone
+ 2M\int_C\hatPhi(\Omega_0)\omegaone\wedge\omegaonedot\\
&=& 4M\int_C\hatPhi(\Omega_0)\omegaone\wedge\omegaonedot
= 4M\int_C\hatPhi(\Omega_0)\omegaone\wedge d\hatPhi d\ast(-S\omegaone)\\
&=& -4M\int_C\ast
d\hatPhi(d\hatPhi(\omegaone\wedge\omegaone)\wedge\omegaone)\wedge
S\omegaone\\
&=& -4M\int_C\ast
d\hatPhi(d\hatPhi(\tast\omegaone\wedge\tast\omegaone)\wedge\tast\omegaone)\wedge
S\tast\omegaone\\
&=& 4M\int_C\ast\ast
d\hatPhi(\ast d\hatPhi(\Omega_0)\wedge\omegaone)\wedge
S\omegaone.
\end{eqnarray*}
From (\ref{201.5}) the second term vanishes. In fact,
$$
M\int_C\mathop{B}^\centerdot K_0I = m\int_C\mathop{B}^\centerdot K_0 
= 2g\int_C\mathop{B}^\centerdot\hatPhi(B) = 0.
$$
The third term is 
\begin{eqnarray*}
&&-M\int_C(d\stardot dK_0)K_0 = -M\int_C(\stardot dK_0)dK_0 = M\int_CdK_0(\stardot
dK_0)\\
&=& -M\int_C\ast dK_0\wedge \ast S\ast dK_0 = M\int_C\ast\ast dK_0 \wedge
S\ast dK_0.
\end{eqnarray*}
Hence from (\ref{205.1}) 
\begin{eqnarray*}
-\mathop{a_g}^\centerdot &=& M\left(\int_C\ast\ast dK_0\wedge S\ast dK_0
+  4\int_C\ast\ast d\hatPhi(\ast dK_0\wedge\omegaone)\wedge
S\omegaone\right)\\ &=& \Re\left(4\sqrt{-1}\int_C\Xi\,\mu\right),
\end{eqnarray*}
as was to be shown.
\end{proof}

Next we study the first variation of the Green function 
$h = -\hatPhi(\delta_{P_0})$.
Let $(C_t, P^t_0)$, $t\in \bR$, $\vert t\vert \ll 1$, a $C^\infty$ curve 
on the universal family $\Cg$. 
We can choose a family of diffeomorphisms $f^t: (C, P_0) \to (C_t,
P^t_0)$ such that $f^t$ is complex analytic near $P_0$ for sufficiently 
small $t$. Then $\mu = (\mu(f))^\centerdot$ vanishes near the point
$P_0$. \par
We compute $\hdot = \frac{d}{dt}\Bigr\vert_{t=0}{f^t}^*h_{P^t_0}$. 
Since ${f^t}^*\delta_{P^t_0} = \delta_{P_0}$ is constant, we have 
\begin{equation*}
d\ast d\hdot = \mathop{B}^\centerdot - d\stardot d h.
\end{equation*}
The RHS is smooth near $P_0$ 
since $\mu$ and $d\stardot dh$ vanish near $P_0$. 
Hence $\hdot$ is smooth near $P_0$ so that we may take the value 
of $\hdot$ at the point $P_0$, $\hdot(P_0)$. \par
From what we have discussed we may apply Lemma \ref{204.1}
to the $2$-current $\Omega = -\delta_{P_0}$. 
Then we have $\mathop{\delta_{P_0}}^{\centerdot} = 0$, $K = h_{P_0}$, 
$A = -1$ and $(\int_CK\Omega)^\centerdot =
\int_C\mathop{K}^\centerdot\Omega = -\hdot(P_0)$. 
Hence we obtain
\begin{equation}
\hdot(P_0) = -2\int_Ch\mathop{B}^\centerdot + \int_Ch(d\stardot dh).
\label{207.2}
\end{equation}

\begin{theorem}\label{207.3}
Let $\Upsilon$ be the quadratic differential defined by
\begin{equation*}
\Upsilon := (\ast\dd h)(\ast\dd h) + \frac{2}{g}\ast\dd\hatPhi(\ast
dh\wedge \omegaone)\cdot\omega'_{(1)}
\end{equation*}
Then we have
\begin{equation*}
\hdot(P_0) = 
-\Re\left(4\sqrt{-1}\int_C\Upsilon\,\mu\right).
\end{equation*}
\end{theorem}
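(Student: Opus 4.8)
The plan is to begin from the identity (\ref{207.2}) already established for $\hdot(P_0)$, namely
\[
\hdot(P_0) = -2\int_C h\,\mathop{B}^{\centerdot} + \int_C h\,(d\stardot dh),
\]
and to massage each of the two terms into the shape $\int_C\ast\theta\wedge S\varphi$, so that the elementary identity (\ref{205.1}) turns them into expressions $\Re(4\sqrt{-1}\int_C(\cdots)\,\mu)$. The computation runs parallel to the proof of Theorem \ref{203.3}, with the Green function $h$ in the role of $K_0$. Throughout, the integrations by parts are legitimate: $S\omegaone$ carries the Beltrami differential $\mu$ and hence vanishes near $P_0$, whereas $h$ has only a logarithmic singularity there, so every boundary contribution at $P_0$ vanishes.

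For the second term I would integrate by parts and invoke $\stardot = -S\ast$ from (\ref{202.2}) to obtain $\int_C h\,(d\stardot dh) = -\int_C dh\wedge\stardot dh = \int_C dh\wedge S(\ast dh)$. Putting $\theta := -\ast dh$ and $\varphi := \ast dh$ (both real, as $h$ is real) gives $\ast\theta = dh$, so the term is $\int_C\ast\theta\wedge S\varphi$; since $(\ast dh)' = \ast\dd h$ and $\theta' = -\varphi'$, identity (\ref{205.1}) yields $-\Re(4\sqrt{-1}\int_C(\ast\dd h)(\ast\dd h)\,\mu)$, which is exactly the first term of $-\Re(4\sqrt{-1}\int_C\Upsilon\,\mu)$.

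The first term carries the real content. I would substitute $\mathop{B}^{\centerdot} = \frac1g m(\omegaone\wedge\omegaonedot)$ (from (\ref{105.1}), the two summands being equal by the antisymmetry of both $\wedge$ and $m$) and $\omegaonedot = -d\hatPhi d\ast S\omegaone$ (Lemma \ref{202.3}), turning the term into $\frac2g\int_C h\,m(\omegaone\wedge d\hatPhi d\ast S\omegaone)$. One integration by parts against $d\omegaone = 0$, the self-adjointness of $\hatPhi$ (\ref{106.1}), and a further integration by parts bring this to $-\frac2g\int_C m\big(d\hatPhi(dh\wedge\omegaone)\wedge\ast S\omegaone\big)$. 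Writing $G := \hatPhi(dh\wedge\omegaone)$ and using $\beta\wedge\ast\gamma = -\ast\beta\wedge\gamma$ for $1$-forms, the term becomes $\frac2g\int_C m(\ast dG\wedge S\omegaone)$, and (\ref{205.1}) with $\varphi = \omegaone$ gives $\frac2g\Re\big(4\sqrt{-1}\int_C m(\dd G\,\omega'_{(1)})\,\mu\big)$.

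The remaining and most delicate step, which I expect to be the main obstacle, is to recognise $m(\dd G\,\omega'_{(1)})$ as $-\ast\dd\hatPhi(\ast dh\wedge\omegaone)\cdot\omega'_{(1)}$. The mechanism, borrowed from the proof of Theorem \ref{203.3}, is to feed in the operator $\tast$: from $\ast\alpha\wedge\ast\beta = \alpha\wedge\beta$ and $\ast\omegaone = \tast\omegaone$ (\ref{201.8}) one has $dh\wedge\omegaone = \ast dh\wedge\tast\omegaone$, hence $G = (1\otimes\tast)\hatPhi(\ast dh\wedge\omegaone)$. Because $\tast$ preserves the intersection form (\ref{201.6}) and acts by $-\sqrt{-1}$ on $H'$ (so by $\sqrt{-1}$ under the $m$-adjoint against $\omega'_{(1)}\in A^1(C)\otimes H'$, cf.\ (\ref{201.7})), the contraction becomes $\sqrt{-1}\,\dd\hatPhi(\ast dh\wedge\omegaone)\cdot\omega'_{(1)}$; and since $\dd(\,\cdot\,)$ is of type $(1,0)$ we have $\dd\hatPhi(\ast dh\wedge\omegaone) = \sqrt{-1}\,\ast\dd\hatPhi(\ast dh\wedge\omegaone)$, so the two factors $\sqrt{-1}$ combine to the sign $-1$. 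This produces the second term of $-\Re(4\sqrt{-1}\int_C\Upsilon\,\mu)$; adding the two contributions completes the proof. The careful tracking of these $\ast/\tast$ interchanges and their signs is the only genuinely subtle point, the rest being routine.
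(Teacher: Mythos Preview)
Your proof is correct and follows essentially the same route as the paper: both start from (\ref{207.2}), treat the two terms separately, and feed them into (\ref{205.1}) after the requisite integrations by parts and the $\tast$-trick coming from (\ref{201.8}). The only cosmetic difference is in how the $\tast$ is discharged for the first term: the paper pushes $\tast$ onto $S\omegaone$ and then uses $S\ast=-\ast S$ to write the integrand as $\ast\ast d\hatPhi(\ast dh\wedge\omegaone)\cdot S\omegaone$, whereas you keep $\tast$ on $G=\hatPhi(dh\wedge\omegaone)$ and pass it through the pairing $m$ via (\ref{201.6}); the two manoeuvres are equivalent.
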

\begin{proof}
We have ${\stackrel{\centerdot}{B}} =
\frac{1}{g}\omegaone\cdot\omegaonedot$ since 
$B = \frac{1}{2g}\omegaone\cdot\omegaone$. 
As to the first term in (\ref{207.2}) we have 
\begin{eqnarray*}
&& g\int_Ch\mathop{B}^\centerdot =
\int_Ch\omegaone\cdot\omegaonedot
= -\int_Ch\omegaone \cdot d\hatPhi d\ast S\omegaone\\
&=& -\int_C\ast d\hatPhi d(h\omegaone)\cdot S\omegaone
= \int_C\ast d\hatPhi(\ast dh\wedge\omegaone)\cdot S\tast\omegaone\\
&=& + \int_C\ast\ast d\hatPhi(\ast dh\wedge\omegaone)\cdot S\omegaone
= \Re\left(4\sqrt{-1}\int_C(\ast\dd\hatPhi(\ast
dh\wedge\omegaone)\cdot \omega'_{(1)})\mu\right).
\end{eqnarray*}
The last line follows from (\ref{205.1}).\par
The second term is equal to 
\begin{eqnarray*}
&&\int_Ch(d\stardot dh) = -\int_Cdh\wedge \stardot dh = \int_Cdh\wedge
S\ast dh\\
&=& -\int_C\ast\ast dh\wedge S\ast dh 
= -\Re\left(4\sqrt{-1}\int_C(\ast\dd h)(\ast\dd h)\mu\right).
\end{eqnarray*}
This completes the proof.
\end{proof}

\section{The flat connection for 
Johnson's homomorphisms.}\label{s-j}

Now we regard the Arakelov-Green function $G(P_0, P_1) := \exp(-4\pi
h_{P_0}(P_1))$ as a function on the fiber product $\Cg\times_\moduli\Cg$. 
The normal bundle of the diagonal map $\Cg \to \Cg\times_\moduli\Cg$ 
is equal to the relative tangent bundle $\TCM$. 
Hence the $(1, 1)$-form on $\Cg$
$$
e^A := \frac{1}{2\pi\sqrt{-1}}\dd\dc\log G\Bigr\vert_{\text{diagonal}}
$$
represents the first Chern class of the bundle $\TCM$. 
Since $\frac1{2\pi\sqrt{-1}}\log G(P_0, ) = 2\sqrt{-1}h_{P_0}$, 
we have
\begin{equation}
{e^A}_{[C, P_0]} = -2\sqrt{-1}\,\dc\dd h_{P_0}.\label{300.0}
\end{equation}

\par
In this section we review a flat connection introduced in the
previous paper \cite{Kaw2}, whose holonomy gives Johnson's
homomorphisms on the mapping class group. The connection induces 
a $(1, 1)$-form $e^J$ on $\Cg$ representing 
the first Chern class of $\TCM$ following Morita's recipe 
\cite{MoF}. 
We relate the second term of the connection form with 
the first variations ${\stackrel{\centerdot}{a_g}}$ and $\hdot(P_0)$ 
computed in \S\ref{s-fv}. As a consequence we obtain
\begin{theorem}\label{300.1}
$$
e^A - e^J = \frac{-2\sqrt{-1}}{2g(2g+1)}\dd\dc a_g.
$$
\end{theorem}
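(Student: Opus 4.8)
The plan is to compare the two closed $(1,1)$-forms $e^A$ and $e^J$ on $\Cg$ by decomposing them according to the splitting of $T\Cg$ into its vertical (fiber) and horizontal (moduli) parts. Since both represent $c_1(\TCM)$ and, by the restriction formulas $e^A\vert_C = e^J\vert_C = (2-2g)B$, agree on every fiber, the difference $e^A - e^J$ has vanishing fiber--fiber component. The right-hand side $\frac{-2\sqrt{-1}}{2g(2g+1)}\dd\dc a_g$ is pulled back from $\Mg$, hence has neither fiber--fiber nor mixed component. So the first task is to show that the mixed (fiber--base) component of $e^A - e^J$ vanishes, and the main task is to match the base--base components.

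For the base--base component I would use the first variation. Writing $e^A_{[C,P_0]} = -2\sqrt{-1}\,\dc\dd h_{P_0}$ as in (\ref{300.0}), the behavior of $e^A$ in a moduli direction $\mu$ is governed by $\hdot(P_0)$, computed in Theorem \ref{207.3} through the quadratic differential $\Upsilon$. On the other side, I recall from \cite{Kaw2} that $e^J$ is the exterior derivative of (the intersection-form contraction of) the second term $\comegatwo$ of the flat connection, a $1$-form on $\Cg$ assembled from $\omegaone$ and the Green operator $\hatPhi$ via $\nu_0 = \ast\dd K_0$. Because $e^J = d(\text{second term})$, its moduli variation is first order in the connection data and can be written in terms of the same building blocks $\nu_0$, $K_0$, $\hatPhi$ that enter the quadratic differential $\Xi$ of $a_g$.

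I would then invoke Theorem \ref{203.3}: since $a_g$ is real-valued, the identity $\mathop{a_g}^\centerdot = -\Re(4\sqrt{-1}\int_C\Xi\,\mu)$ together with the complex-linearity of both $\mu\mapsto\dd a_g(\mu)$ and $\mu\mapsto\int_C\Xi\,\mu$ forces $\dd a_g(\mu) = -2\sqrt{-1}\int_C\Xi\,\mu$, so $\dd\dc a_g$ is obtained by differentiating this $(1,0)$-form once more in the antiholomorphic moduli direction. The remark following the statement --- that $e^J$ is already an exterior derivative --- is exactly what lets me avoid the full second variation: since $e^A = d(2\sqrt{-1}\,\dc h)$, $e^J = d(\comegatwo\text{-contraction})$, and $c\,\dd\dc a_g = d(c\,\dc a_g)$ with $c = \frac{-2\sqrt{-1}}{2g(2g+1)}$, the outer $d$ absorbs the second differentiation and the claimed equality reduces to a first-order comparison of these primitives, which is precisely the content of Theorems \ref{203.3} and \ref{207.3}.

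The hard part will be the bookkeeping in this last step: relating the three quadratic differentials --- $\Xi$ attached to $a_g$ in Theorem \ref{203.3}, $\Upsilon$ attached to $h$ and hence to $e^A$ in Theorem \ref{207.3}, and the one extracted from $d\comegatwo$ attached to $e^J$ --- requires careful use of the contractions $m$ and $M$, the isotropy of $H'$ and $H''$, and the Hodge-theoretic identities (\ref{106.1})--(\ref{106.5}) for $\hatPhi$. In particular the normalization $2g(2g+1)$ must be tracked through the passage from the $H^{\otimes 2}$-valued data underlying $a_g$ to the scalar data underlying $h$, together with the separate contribution of $\comegatwo$; verifying that these combine to the single constant $\frac{1}{2g(2g+1)}$ is where the computation is most delicate.
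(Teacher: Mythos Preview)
Your core strategy---express each of $e^A$, $e^J$, and $\dd\dc a_g$ as an exterior derivative and compare primitives via the first variations of Theorems \ref{203.3} and \ref{207.3}---is the paper's approach. The paper, however, does not decompose $e^A-e^J$ into fiber/mixed/base components. It instead identifies a single common primitive, the scalar contraction $(m\otimes m)\eta_2$ of the second term of the connection $\eta$ on $\Mgone$ (be careful: this is $\eta_2$, not $\omegatwo$ on $C$), and computes $d(m\otimes m)\eta_2$ in two ways. One way uses the flatness relation $d\eta_2 = (M_3+M_4)(\eta_1^{\otimes 2})$ together with (\ref{304.7}), yielding $(m\otimes m)d\eta_2 = -2g(2g+1)e^J$. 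The other is the algebraic identity of Lemma \ref{303.2},
$$(m\otimes m + M)(\omega'_{(2)}\omega'_{(2)} + 2\omega'_{(3)}\omega'_{(1)}) = \Xi + 2g(2g+1)\Upsilon,$$
which combined with Theorems \ref{203.3} and \ref{207.3} gives $(m\otimes m)\eta'_2 = \sqrt{-1}\dd a_g + 2g(2g+1)\sqrt{-1}\dd h(P_0)$ and hence (Corollary \ref{303.5}) $d(m\otimes m)\eta_2 = -2\sqrt{-1}\dd\dc a_g - 2g(2g+1)e^A$. Equating the two expressions for the same exterior derivative gives the theorem directly as an identity of $2$-forms on all of $\Cg$, so no separate argument for the mixed component is needed. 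The delicate bookkeeping you anticipate in your last paragraph is precisely Lemma \ref{303.2}; the constant $2g(2g+1)$ enters there through $(m\otimes m + M)(II) = 2g(2g+1)$.
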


Let $[C, P_0] \in \Cg$ be a pointed Riemann surface. 
We define the Green operator $\Phi = \Phi^{(C, P_0)}: A^2(C) 
\to A^0(C)/\bC$ with respect to the delta current $\delta_{P_0}$
by 
\begin{equation}
d\ast d\Phi(\Omega) = \Omega - (\int_C\Omega)\,\delta_{P_0}.
\label{301.1}
\end{equation}
Here $A^0(C)/\bC$ is the quotient space by the constant functions
$\bC$. Since $d\Phi d= d\hatPhi d$, we have the Hodge decomposition 
\begin{equation}
\varphi = \harmonic\varphi + \ast d\Phi d\varphi + d\Phi d\ast
\varphi
\label{301.2}\end{equation}
for any $\varphi \in A^1(C)$.\par
We have 
\begin{equation}
\ast d\hatPhi(\Omega) = \ast d\Phi(\Omega) - (\int_C\Omega)\ast dh
\label{301.3}
\end{equation}
for any $\Omega \in A^2(C)$. In fact, 
$\Omega - (\int_C\Omega)\delta_{P_0}$ is $d$-exact, so that 
$$
\ast d\Phi(\Omega) - \ast d\hatPhi(\Omega)
= (\int_C\Omega)(\ast d\Phi - \ast d\hatPhi)(\delta_{P_0}) 
= (\int_C\Omega)\ast dh.
$$
In particular, substituting $\Omega = B$ into (\ref{301.3}), we obtain
\begin{equation}
\ast dh = \ast d\Phi(B) = \frac{1}{2g}m\ast d\Phi(\Omega_0)
\label{301.4}
\end{equation}

Now we define $\omega_{(n)}$, $n \geq 2$, by
\begin{equation*}
\omega_{(n)} := 
\ast d\Phi\left(\sum^{n-1}_{p=1}\omega_{(p)}
\wedge\omega_{(n-p)}\right)
\end{equation*}
inductively on $n$, and 
$\omega := \sum^\infty_{n=1}
\omega_{(n)} \in A^1(C)\wotimes\T$. 
Here $\wotimes$ means the completed tensor product. Then we have the (modified) integrability condition
\begin{equation*}
d\omega = \omega\wedge\omega - I\delta_{P_0},
\end{equation*}
where $I \in H^{\otimes 2}$ is the intersection form
in (\ref{201.3}). 
This means the $1$-form $\omega$ defines a flat connection on the vector bundle $(C\setminus \{P_0\})\times \T$. 
Its holonomy is the harmonic Magnus expansion, 
an embedding of the fundamental group of 
$C\setminus \{P_0\}$ with a tangential basepoint 
towards $P_0$ into the multiplicative group of the 
algebra $\T$ \cite{Kaw2}. \par

In the case $n=2$ we have 
\begin{equation}
\omega_{(2)} = \ast d\Phi(\Omega_0) = \ast d\hatPhi(\Omega_0) + I\ast dh
= \ast dK_0 + I\ast dh.
\label{303.1}
\end{equation}
from (\ref{301.3}).\par

\begin{lemma}\label{303.2}
$$
(m\otimes m + M)(\omega'_{(2)}\omega'_{(2)} +
2\omega'_{(3)}\omega'_{(1)}) = \Xi + 2g(2g+1)\Upsilon.
$$
\end{lemma}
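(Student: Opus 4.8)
The plan is to substitute the explicit second- and third-order terms of the connection form and reduce both sides to quadratic differentials built from $\nu_0 = \ast\dd\hatPhi(\Omega_0)$ and $\lambda := \ast\dd h = (\ast dh)'$. By (\ref{303.1}) we have $\omega'_{(2)} = \nu_0 + I\lambda$, and the inductive definition of $\omega_{(n)}$ at $n=3$ combined with (\ref{301.3}) gives
$$
\omega'_{(3)} = \ast\dd\hatPhi(\Theta) + \Bigl(\int_C\Theta\Bigr)\lambda, \qquad \Theta := \omegaone\wedge\omega_{(2)} + \omega_{(2)}\wedge\omegaone .
$$
Before expanding I would record the scalar inputs that drive the coefficients. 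From (\ref{201.3}) and (\ref{201.5}), $m(I)=2g$, $M(II)=2g$ and $(m\otimes m)(II)=m(I)^2=4g^2$, so $(m\otimes m + M)(II) = 2g(2g+1)$. Since $\int_C B = 1$, equations (\ref{106.1})--(\ref{106.2}) force $\hatPhi(B)=0$, whence $m(\nu_0) = \ast\dd\hatPhi(\omegaone\cdot\omegaone) = 2g\,\ast\dd\hatPhi(B) = 0$. I would keep $\Xi$ in the reduced form (\ref{206.2}) and $\Upsilon$ as in Theorem \ref{207.3}.

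For the degree $(2,2)$ term, expanding gives $\omega'_{(2)}\omega'_{(2)} = \nu_0\nu_0 + (\nu_0 I + I\nu_0)\lambda + II\lambda^2$. I expect every mixed term to drop out: by (\ref{201.5}) together with $m(\nu_0)=0$, both $M$ and $m\otimes m$ annihilate $\nu_0 I\lambda$ and $I\nu_0\lambda$, and $m(\nu_0)=0$ also yields $(m\otimes m)(\nu_0\nu_0)=0$. Hence
$$
(m\otimes m + M)(\omega'_{(2)}\omega'_{(2)}) = M(\nu_0\nu_0) + 2g(2g+1)\lambda^2,
$$
which already supplies the $M(\nu_0\nu_0)$ summand of $\Xi$ and the $2g(2g+1)\lambda^2$ summand of $2g(2g+1)\Upsilon$.

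It then remains to show that $2(m\otimes m + M)(\omega'_{(3)}\omega'_{(1)})$ supplies the leftover pieces $4M(\ast\dd\hatPhi(\nu_0\wedge\omega''_{(1)})\omega'_{(1)})$ and $4(2g+1)\,\ast\dd\hatPhi(\ast dh\wedge\omegaone)\cdot\omega'_{(1)}$. I would split $\Theta = \Theta_1 + \Theta_2$ into its $\nu_0$-part $\Theta_1 = \omegaone\wedge\nu_0 + \nu_0\wedge\omegaone$ and its $h$-part $\Theta_2 = \omegaone\wedge(I\ast dh) + (I\ast dh)\wedge\omegaone$. For $\Theta_1$, contracting the two internal slots of $\nu_0$ again produces $m(\nu_0)=0$, so $m\otimes m$ kills the summand $\nu_0\wedge\omegaone$ while $M$ kills $\omegaone\wedge\nu_0$; the two surviving half-contractions, after the isotropy reduction of $H'$ and $H''$ already used for (\ref{206.2}), each equal $M(\ast\dd\hatPhi(\nu_0\wedge\omega''_{(1)})\omega'_{(1)})$, giving $2M(\ast\dd\hatPhi(\nu_0\wedge\omega''_{(1)})\omega'_{(1)})$. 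For $\Theta_2$ together with the scalar-multiple piece $(\int_C\Theta)\lambda$, the object being contracted is now the intersection form $I$: contracting it by $M$ yields the factor $m(I)=2g$ via (\ref{201.5}), whereas contracting it by $m\otimes m$ yields an extra $+1$ via (\ref{201.4}). This combination $2g+1$ is exactly the coefficient visible in $\Upsilon$; after evaluating $\int_C\Theta$ and combining the $(\int_C\Theta)\lambda$ piece with $\Theta_2$, the $h$-contribution should collapse to $2(2g+1)\,\ast\dd\hatPhi(\ast dh\wedge\omegaone)\cdot\omega'_{(1)}$. Adding the three contributions produces $\Xi + 2g(2g+1)\Upsilon$.

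The main obstacle is the $H^{\otimes 4}$ bookkeeping in this last step: one must track slot positions through $m\otimes m$ and $M$, respect the antisymmetry of the intersection pairing, and check that each cross term either cancels (via $m(\nu_0)=0$ or isotropy) or falls into the correct bucket. The genuinely delicate point is the $h$-term, where computing $\int_C\Theta$ and verifying that the $(\int_C\Theta)\lambda$ contribution conspires with $\Theta_2$ to reproduce precisely the $\frac{2}{g}$-term of $\Upsilon$ (with overall coefficient $2g(2g+1)$) is where a sign or a stray term is most likely to go wrong. The conceptual content, by contrast, is carried entirely by the three facts $m(\nu_0)=0$, the isotropy of $H'$ and $H''$, and $(m\otimes m + M)(II) = 2g(2g+1)$; everything else is careful contraction.
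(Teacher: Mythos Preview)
Your approach is essentially the same as the paper's: both expand $\omega'_{(2)} = \nu_0 + I\,\ast\dd h$ via (\ref{303.1}) and use the identities $m(\nu_0)=0$ and $(m\otimes m + M)(II) = 2g(2g+1)$ to isolate the $\Xi$- and $\Upsilon$-pieces. Your computation of the $(2,2)$-term and your plan for the $(3,1)$-term match the paper's argument.

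There is one unnecessary complication in your outline. You carry the term $(\int_C\Theta)\lambda$ and worry that it must ``conspire'' with $\Theta_2$ to produce the correct $h$-contribution. In fact $\int_C\Theta = 0$: the paper observes that $\omega_{(2)}\wedge\omegaone$ is $d$-exact (since $\omega_{(2)} = \ast d\Phi(\Omega_0)$ is co-exact and $\omegaone$ is harmonic, their wedge integrates to zero), so by (\ref{301.3}) one has directly $\omega'_{(3)} = \ast\dd\hatPhi(\Theta)$ with no extra term. This removes what you flagged as ``the genuinely delicate point''; the $h$-part comes purely from $\Theta_2$, and its contraction is then an immediate application of (\ref{201.4}) and (\ref{201.5}) giving the factor $2g+1$ just as you anticipated. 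Beyond this, the paper organizes the $(3,1)$-contraction slightly differently: rather than splitting $\Theta$ into $\Theta_1+\Theta_2$, it uses the symmetry that swapping $\omega_{(2)}\wedge\omegaone \leftrightarrow \omegaone\wedge\omega_{(2)}$ interchanges the effects of $m\otimes m$ and $M$, yielding the factor of $2$ in front of each. Either bookkeeping reaches the same destination.
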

\begin{proof} We have $mK_0 = 2g\hatPhi(B) = 0$ so that 
$m\omega_{(2)} = 2g\ast dh$. 
It follows from (\ref{303.1}) 
\begin{eqnarray*}
(m\otimes m + M)(\omega'_{(2)}\omega'_{(2)}) 
&=& 
M(\ast\dd K_0)(\ast\dd K_0) + (m\otimes m + M)(II)(\ast\dd h)(\ast\dd h)\\
&=& 
M(\ast\dd K_0)(\ast\dd K_0) + 2g(2g+1)(\ast\dd h)(\ast\dd h).
\end{eqnarray*}
Since $\omega_{(2)}\wedge\omega_{(1)}$ is $d$-exact,
$$
\omega'_{(3)} = \ast\dd\hatPhi(\omega_{(2)}\wedge\omega_{(1)}) 
+ \ast\dd\hatPhi(\omega_{(1)}\wedge\omega_{(2)}).
$$
Hence 
\begin{eqnarray*}
&& (m\otimes m + M)(\omega'_{(3)}\omega'_{(1)})\\
&=& 2(m\otimes m)(\ast\dd\hatPhi(\omega_{(2)}\wedge\omega_{(1)})
\omega'_{(1)}) + 2M(\ast\dd\hatPhi(\omega_{(2)}\wedge\omega_{(1)})
\omega'_{(1)})\\
&=& 2(2g+1)\ast\dd\hatPhi(\ast dh\wedge\omega_{(1)})\cdot
\omega'_{(1)}
+ 2M(\ast\dd\hatPhi(\ast dK_0\wedge\omega_{(1)})\omega'_{(1)}).
\end{eqnarray*}
This completes the proof.
\end{proof}

We introduce the operator $N: \T \to \T$ defined by 
\begin{eqnarray}
N\vert_{H^{\otimes n}} :=\sum^{n-1}_{k=0}\begin{pmatrix}
1& 2& \cdots & n-1 & n\\
2& 3& \cdots & n & 1
\end{pmatrix}^k.
\label{303.1}
\end{eqnarray}
As was shown in \cite[\S7]{Kaw2}, the covariant tensor 
 $N(\omega'\omega')$ is a meromorphic quadratic differential with a
unique pole at $P_0$, and so it is regarded as a $(1, 0)$-cotangent
vector at $[C, P_0, v]\in \Mgone$ in a natural way. 
We denote by $\eta'_n$ the $(n+2)$-nd graded component of 
$N(\omega'\omega')$. By abuse of notation we denote by 
$H^{\otimes n}$ the vector bundle 
$$
H^{\otimes n} := \coprod_{[C] \in \moduli}
H_1(C; \bC)^{\otimes n}
$$
over the moduli space $\moduli$. 
$\eta_n := \eta'_n +\overline{\eta'_n}$ is a twisted real 
$1$-form with values in the vector bundle $H^{\otimes (n+2)}$ 
on $\Mgone$. The real twisted $1$-form 
$\eta := \sum^\infty_{n=1}\eta_n$ induces a flat 
connection on $H^*\otimes \T_2$ and the holonomy gives 
all of Johnson's homomorphisms on the mapping class group
$\pi_1(\Mgone)$. Here we denote $\T_2 := \prod^\infty_{n=1}
H^{\otimes (n+2)}$.\par
Now we look at $\eta_2' = N(\omega'_{(1)}\omega'_{(3)} +
\omega'_{(2)}\omega'_{(2)} + \omega'_{(3)}\omega'_{(1)})$. 
By Lemma \ref{303.2} we have 
\begin{equation*}
(m\otimes m)\eta'_2 = \sqrt{-1}\dd a_g +2g(2g+1) \sqrt{-1}\dd h(P_0).
\end{equation*}
Hence $\dd(m\otimes m)\eta'_2 = 0$ and $d(m\otimes m)\eta_2 = 
2\dc(m\otimes m)\eta'_2$. Consequently
\begin{corollary}\label{303.5}
$$
d(m\otimes m)\eta_2 = -2\sqrt{-1}\dd\dc a_g - 2g(2g+1)e^A.
$$
\end{corollary}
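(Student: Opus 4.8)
The plan is to deduce the corollary directly from the identity $(m\otimes m)\eta'_2 = \sqrt{-1}\dd a_g + 2g(2g+1)\sqrt{-1}\dd h_{P_0}$ displayed just above, using only the reality of $\eta_2$ and the expression (\ref{300.0}) for $e^A$. First I would record that, since $a_g$ and $h_{P_0}$ are real-valued, the right-hand side is $\sqrt{-1}\dd F$ with $F := a_g + 2g(2g+1)h_{P_0}$; hence $(m\otimes m)\eta'_2$ is a $(1,0)$-form and $\dd(m\otimes m)\eta'_2 = \sqrt{-1}\dd\dd F = 0$.

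Next I would use that $\eta_2 = \eta'_2 + \overline{\eta'_2}$ and that the intersection pairing $m: H\otimes H\to\bC$ is real, so that it commutes with the complex conjugation coming from $H_1(C;\bR)$. Consequently $(m\otimes m)\overline{\eta'_2} = \overline{(m\otimes m)\eta'_2}$, and $(m\otimes m)\eta_2$ is the sum of the $(1,0)$-form $(m\otimes m)\eta'_2$ and its $(0,1)$-conjugate. Applying $d = \dd + \dc$, the term $\dd(m\otimes m)\eta'_2$ vanishes by the previous step, and $\dc\,\overline{(m\otimes m)\eta'_2} = \overline{\dd(m\otimes m)\eta'_2} = 0$; the two remaining terms are $\dc(m\otimes m)\eta'_2$ and $\dd\,\overline{(m\otimes m)\eta'_2}$, and a one-line computation (using $F$ real and $\dd\dc = -\dc\dd$) shows both equal $\sqrt{-1}\dc\dd F$. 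This yields $d(m\otimes m)\eta_2 = 2\dc(m\otimes m)\eta'_2$.

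It then remains to substitute and simplify. From the displayed identity, $2\dc(m\otimes m)\eta'_2 = 2\sqrt{-1}\dc\dd a_g + 2g(2g+1)\cdot 2\sqrt{-1}\dc\dd h_{P_0}$. On the first summand I would rewrite $2\sqrt{-1}\dc\dd a_g = -2\sqrt{-1}\dd\dc a_g$ via $\dc\dd = -\dd\dc$. On the second I would invoke (\ref{300.0}), namely $e^A = -2\sqrt{-1}\dc\dd h_{P_0}$, so that $2g(2g+1)\cdot 2\sqrt{-1}\dc\dd h_{P_0} = -2g(2g+1)e^A$. Adding the two pieces gives precisely $-2\sqrt{-1}\dd\dc a_g - 2g(2g+1)e^A$, as claimed.

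The substantive content is not in this last computation but in the displayed identity $(m\otimes m)\eta'_2 = \sqrt{-1}\dd a_g + 2g(2g+1)\sqrt{-1}\dd h_{P_0}$, which I would establish by combining Lemma \ref{303.2} with the first-variation formulas of Theorems \ref{203.3} and \ref{207.3}: one checks that on $H^{\otimes 4}$ the operator $N$ is $1 + \sigma + \sigma^2 + \sigma^3$ for the cyclic shift $\sigma$, computes $(m\otimes m)N = 2(m\otimes m + M)$ from $(m\otimes m)\sigma = M$ and $(m\otimes m)\sigma^2 = m\otimes m$, and then identifies the quadratic differentials $2\Xi$ and $2\Upsilon$ with $\sqrt{-1}\dd a_g$ and $\sqrt{-1}\dd h_{P_0}$ through the pairing $\int_C(\,\cdot\,)\mu$ between quadratic differentials and the Beltrami class $\mu$ on Teichm\"uller space. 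Granting this input, the only thing to watch in the corollary itself is the bookkeeping of complex conjugation against $m$ together with the signs from $\dc\dd = -\dd\dc$; getting the factor $2$ and the sign of $e^A$ right hinges entirely on the reality of the intersection form and on the normalization in (\ref{300.0}), which I would verify carefully.
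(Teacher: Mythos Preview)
Your proposal is correct and follows essentially the same route as the paper: the paper also deduces the corollary from the displayed identity $(m\otimes m)\eta'_2 = \sqrt{-1}\dd a_g + 2g(2g+1)\sqrt{-1}\dd h(P_0)$ by noting $\dd(m\otimes m)\eta'_2 = 0$ and $d(m\otimes m)\eta_2 = 2\dc(m\otimes m)\eta'_2$, then substituting (\ref{300.0}). You simply spell out the reality and conjugation bookkeeping that the paper leaves implicit, and your sketch of how the displayed identity itself follows from Lemma~\ref{303.2} via $(m\otimes m)N = 2(m\otimes m + M)$ on $H^{\otimes 4}$ is exactly the intended mechanism.
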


From the flatness of $\eta$ we have $d\eta_1 = 0$. 
Moreover 
$\eta_1$ may be regarded as a real $1$-form on $\Cg$. 
We identify $\LaH$ with a submodule of $H^{\otimes 3}$ 
by the embedding
\begin{equation}
\LaH \to H^{\otimes 3}, \quad
Z_1\wedge Z_2\wedge Z_3 \mapsto
\sum_{\sigma\in\frak{S}_3}(\operatorname{sgn}\sigma)Z_{\sigma(1)}Z_{\sigma(2)}Z_{\sigma(3)}.
\label{304.1}
\end{equation}
As was proved in \cite[\S8]{Kaw2}, the cohomology class $-[\eta_1]$ is 
equal to the first extended Johnson homomorphism $\tilde k \in H^1(\Cg;
\LaH)$ introduced by Morita \cite{MoJ}. \par
We define $\hatM: H^{\otimes 6} \to \bC$ by 
\begin{equation}
\hatM(Z_1Z_2Z_3W_1W_2W_3) := (Z_1\cdot W_1)(Z_2\cdot W_2)(Z_3\cdot W_3), 
\quad Z_i, W_i \in H,
\label{304.2}
\end{equation}
$M_1$ and $M_2: \LaH\otimes\LaH \to \bC$ by
\begin{equation*}
M_1 := (m\otimes m\otimes m)\vert_{\LaH\otimes\LaH}
\quad \mbox{and}\quad
M_2 := \hatM\vert_{\LaH\otimes\LaH}
\end{equation*}
respectively. Then Morita \cite{MoF} proved
\begin{theorem}[\cite{MoF}, Theorems 5.1 and 5.8]
\begin{eqnarray*}
&&-\frac{1}{2g(2g+1)}(M_1+M_2)({\tilde k}^{\otimes 2}) = e (:= c_1(\TCM))
\in H^2(\Cg; \bC).\\
&&\frac{1}{2g+1}(-3M_1+2(g-1)M_2)({\tilde k}^{\otimes 2}) = e_1
\in H^2(\Cg; \bC).
\end{eqnarray*}
Here $e_1$ is the first Morita-Mumford class \cite{Mu} \cite{Mo}
on the moduli space $\moduli$. 
\end{theorem}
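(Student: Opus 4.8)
The plan is to prove the two displayed identities simultaneously by a dimension count in $\mathrm{Sp}$-invariant cohomology, followed by the determination of four numerical coefficients. First, since $\tilde k = -[\eta_1]$ is represented by the closed $\LaH$-valued $1$-form $\eta_1$ on $\Cg$, the cup square $\tilde k \cup \tilde k$ is represented by the wedge $\eta_1 \wedge \eta_1$, a closed $2$-form with values in $\LaH \otimes \LaH$. Because $\eta_1$ has odd degree, $\eta_1\wedge\eta_1$ is anti-invariant under the transposition of the two tensor factors, hence takes values in $\Lambda^2(\LaH)$. Applying the two contractions produces closed scalar $2$-forms representing $M_1(\tilde k^{\otimes 2})$ and $M_2(\tilde k^{\otimes 2})$; since $M_1, M_2$ and $\tilde k$ are all $\mathrm{Sp}$-equivariant, both classes lie in the $\mathrm{Sp}$-invariant part of $H^2(\Cg;\bC)$.

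Second, I would identify the relevant invariant subspace. For $g\geq 3$ one has $H^1(\Mg;\bC)=0$, and by the Leray--Hirsch theorem applied to $\pi:\Cg\to\Mg$ together with the computation of $\mathrm{Sp}$-invariants, the invariant part of $H^2(\Cg;\bC)$ is two-dimensional, spanned by $e=c_1(\TCM)$ and $\pi^*e_1$. Equivalently, the space of $\mathrm{Sp}$-invariant antisymmetric pairings on $\Lambda^2(\LaH)$ is itself two-dimensional, spanned by the restrictions of $M_1$ and $M_2$, so the two contractions exhaust the available data. Hence there are constants $a_j,b_j$, depending only on $g$, with
$$
M_j(\tilde k^{\otimes 2}) = a_j\,e + b_j\,\pi^* e_1, \qquad j=1,2 .
$$
Inverting this $2\times 2$ system for $e$ and $e_1$ yields identities of exactly the stated shape; the entire content of the theorem is then the determination of the four constants.

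Third, I would pin down the coefficients. The $a_j$ are read off by restriction to a fiber $C$: there $e|_C=(2-2g)[C]$ while $\pi^*e_1|_C=0$, and $\tilde k|_C$ is the fiberwise pointed first Johnson homomorphism, represented by $\eta_1|_C$ (the degree-$3$ part of $N(\omega'\omega')$ together with its conjugate). Thus $\int_C M_j(\eta_1\wedge\eta_1)=a_j(2-2g)$, and the left-hand side is an explicit $\mathrm{Sp}$-invariant integral computable from the harmonic and Green-operator formulas of Sections \ref{s-a}--\ref{s-fv}. The $b_j$, by contrast, are invisible to fiber restriction, since $\pi^*e_1|_C=0$; to extract them one must evaluate the global class, which I would do either by a direct cocycle computation of the cup product in the cohomology of $\pione(\Cg)$ using Morita's crossed-homomorphism formula for $\tilde k$, or by evaluating $M_j(\tilde k^{\otimes 2})$ on a test surface bundle over a base on which $e_1$ is nonzero and known. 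Solving the resulting linear system produces the coefficients $-\frac{1}{2g(2g+1)}(M_1+M_2)$ for $e$ and $\frac{1}{2g+1}(-3M_1+2(g-1)M_2)$ for $e_1$.

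The main obstacle is precisely the determination of the $\pi^*e_1$-components $b_j$. Both restriction to a fiber and fiber integration annihilate $\pi^*e_1$, so these coefficients cannot be recovered from single-surface data; they require a genuinely global input --- the explicit cocycle for the extended Johnson homomorphism together with its cup product, or evaluation on an explicit nontrivial family. This normalization is the technical heart of Morita's Theorems 5.1 and 5.8, whereas the structural reduction to a two-dimensional invariant space and the fiberwise computation of the $a_j$ are comparatively routine.
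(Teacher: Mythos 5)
This theorem is not proved in the paper at all: it is imported verbatim from Morita \cite{MoF}, Theorems 5.1 and 5.8, and used as a black box to justify the definitions (\ref{304.5}) and (\ref{304.6}). So the benchmark is Morita's own argument, which is an explicit group-cocycle computation with the crossed homomorphism $\tilde k$ (via the representation of the mapping class group into $\frac{1}{2}\LaH\rtimes \mathrm{Sp}(2g;\mathbb{Z})$ and the two invariant pairings), uniform in $g$ and independent of any knowledge of $H^2$ of the mapping class group. Your route --- a dimension count in $H^2(\Cg;\bC)$ followed by normalization of four constants --- is genuinely different, and it has two real gaps. The first is the reduction step itself. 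The two-dimensionality of the space of $\mathrm{Sp}$-invariant skew pairings on $\LaH$ is a statement of invariant theory about the coefficients and does not, by itself, locate the scalar classes $M_j(\tilde k^{\otimes 2})$ inside $H^2(\Cg;\bC)$; your ``equivalently'' conflates two different spaces. The fact you actually need is $H^2(\Cg;\bC)=\bC e\oplus\bC\,\pi^*e_1$, which is Harer's computation of the second cohomology of the mapping class group --- a deep external input, established only in a range of genus, whereas Morita's identities are asserted for every $g$. Worse, at $g=2$ one has $e_1=0$ in $H^2(\Mg;\mathbb{Q})$ (from $10\lambda=\delta$ on $\overline{\mathbb{M}}_2$), so $e$ and $\pi^*e_1$ are not a basis there and your $2\times 2$ inversion degenerates; a dimension-count proof would need separate treatment of small genus, which the proposal does not acknowledge.

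The second and decisive gap is the one you yourself flag: the coefficients $b_j$ of $\pi^*e_1$. You correctly observe that both restriction to a fiber and integration along the fiber annihilate $\pi^*e_1$ (the fiberwise determination of the $a_j$ is sound, and is consistent with the paper's Lemma \ref{403.6} and the resulting identity (\ref{403.65})), but you then defer the $b_j$ to an unexecuted ``direct cocycle computation'' or to evaluation on an unspecified ``test surface bundle.'' No bundle is exhibited on which $e_1$ is nonzero and independently known \emph{and} on which the twisted class $\tilde k$ and its cup square are computable --- producing such data is nontrivial precisely because $\tilde k$ is only a crossed homomorphism --- and the cocycle computation you postpone is exactly the content of Morita's Theorems 5.1 and 5.8. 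In other words, at the crucial step the proposal reduces the theorem to itself: the specific values $-\frac{1}{2g(2g+1)}(M_1+M_2)$ and $\frac{1}{2g+1}(-3M_1+2(g-1)M_2)$ are never derived. As a proof strategy the skeleton is reasonable (and the structural remarks about $\eta_1\wedge\eta_1$ taking values in the antisymmetric part, and about $\mathrm{Sp}$-equivariance, are correct), but as it stands the argument establishes only that identities of the stated \emph{shape} exist in a stable range of genus, not the theorem.
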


Following this theorem we define
\begin{eqnarray}
&&e^J := -\frac{1}{2g(2g+1)}(M_1+M_2)({\eta_1}^{\otimes 2})
\label{304.5}
\\
&&e^J_1 := \frac{1}{2g+1}(-3M_1+2(g-1)M_2)({\eta_1}^{\otimes 2}).
\label{304.6}
\end{eqnarray}
These are closed $2$-forms on $\Cg$ representing the cohomology classes 
$e$ and $e_1$, respectively. As will be shown in
\S\ref{s-e},
$e^J_1$ can be regarded as a $2$-form on $\moduli$. \par

Moreover we define $M_3$ and $M_4: H^{\otimes 6} \to H^{\otimes 4}$ by 
\begin{eqnarray}
&& M_3(Z_1Z_2Z_3W_1W_2W_3) := (Z_3\cdot W_1)Z_1Z_2W_2W_3,\\
&& M_4(Z_1Z_2Z_3W_1W_2W_3) := (Z_3\cdot W_2)W_1Z_1Z_2W_3,
\end{eqnarray}
respectively. Then we have
\begin{equation}
(m\otimes m)M_3\vert_{\LaH\otimes\LaH} = M_1, \quad\mbox{and}\quad
(m\otimes m)M_4\vert_{\LaH\otimes\LaH} = M_2.\label{304.7}
\end{equation}
It follows from the flatness of $\eta$ 
\begin{equation*}
d\eta_2 = (M_3+M_4)({\eta_1}^{\otimes 2})
\end{equation*}
\cite[Lemma 2.4]{Kaw2}. Hence we obtain
\begin{equation}
(m\otimes m)d\eta_2 = (M_1+M_2)({\eta_1}^{\otimes 2}) = 
-2g(2g+1)e^J.
\end{equation}
Theorem \ref{300.1} follows from Corollary \ref{303.5} and
(\ref{304.7}).\par

\bigskip
The residue of the quadratic differential $(m\otimes m)(\eta'_2)$ 
at the pole $P_0$ is $-\frac{1}{8\pi^2}2g(2g+1)$. This also implies 
$-\frac{1}{2g(2g+1)}(m\otimes m)d\eta_2$ represents the first 
Chern form of the relative tangent bundle $\TCM$.

\section{Integration along the fiber.}\label{s-int}

Now we introduce another $2$-form on $\moduli$ 
\begin{equation*}
e^F_1 := \fiber (e^J)^2
\end{equation*}
representing the first Morita-Mumford class $e_1 \in H^2(\moduli; \bC)$. 
To simplify the situation we compute
\begin{equation*}
E^F_1 := \frac{1}{4(2g-2)^4}\fiber M_1({\eta_1}^{\otimes 2})^2
= \frac14\fiber(m\otimes m)({\eta^H_1})^{\otimes 4}
\end{equation*}
instead of $e^F_1$. Here we denote
\begin{equation*}
\eta^H_1 := \frac{1}{2g-2}(m\otimes 1)\eta_1
\end{equation*}
It is clear that $\frac{1}{(2g-2)^2}M_1({\eta_1}^{\otimes 2}) =
m({\eta^H_1}^{\otimes 2})$. 
Since 
\begin{equation}
M_1({\eta_1}^{\otimes 2}) = -e^J_1 + 2g(2-2g)e^J,
\label{401.4}\end{equation}
we have
\begin{eqnarray*}
&& E^F_1 = \frac{g^2}{(2g-2)^2}e^F_1 - \frac{g}{(2g-2)^2}e^J_1
\\
&& [E^F_1] = \frac{g}{4(g-1)}e_1.
\end{eqnarray*}

Let $\lambda$ and $\mu$ be Beltrami differentials, or equivalently
elements in $C^\infty(C; TC\otimes\overline{T^*C})$.
$\lambda$ and $\mubar$ are regarded as tangent vectors of $\moduli$
at $[C]$. We define
\begin{eqnarray}
&& \elll := 2\hatPhi d\ast(\omega'_{(1)}\lambda) \in A^0(C)\otimes H'
\nonumber
\\
&& L^\lambda := \int_C\elll\omegaone\wedge\omegaone \in H^{\otimes
3}
\nonumber
\\
&& c^\lambda := \frac1{1-g}(m\otimes 1)(L^\lambda) = \frac1{1-g}
\int_C(\elll\cdot\omegaone)\omegaone \in H.\label{402.3}
\end{eqnarray}
The purpose of this section is to prove
\begin{theorem}\label{402.4}
The $2$-form $E^F_1$ is a $(1, 1)$-form on the moduli space $\moduli$, 
and we have
$$
E^F_1(\lambda, \mubar) = 2\int_C(\elll\cdot\omegaone)(\omegaone\cdot\ellm)
+ 2g\int_C(\elll\cdot\ellm)B + (2-2g)c^\lambda\cdot\overline{c^\mu}
$$
for any $\lambda$ and $\mu \in
C^\infty(C; TC\otimes\overline{T^*C})$. 
\end{theorem}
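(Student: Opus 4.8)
The plan is to compute $E^F_1 = \frac14\fiber(m\otimes m)((\eta^H_1)^{\otimes 4})$ by first writing the $H$-valued $1$-form $\eta^H_1$ on $\Cg$ as a sum of a vertical (fiber) and a horizontal (base) part, and then performing the integration along the fiber by a bidegree count. The first step is to identify the vertical part of $\eta^H_1$ with the tautological harmonic form $\omegaone$. This is forced already on the level of the symmetric square: restricting the identity $\frac{1}{(2g-2)^2}M_1(\eta_1^{\otimes 2}) = m((\eta^H_1)^{\otimes 2})$ to a fiber $C$ and using $e^J_1\vert_C = 0$, $e^J\vert_C = (2-2g)B$ together with (\ref{401.4}) gives $m((\eta^H_1)^{\otimes 2})\vert_C = 2gB = m(\omegaone\wedge\omegaone)$ by (\ref{105.1}); combined with the description of the leading term of the harmonic Magnus expansion in \cite{Kaw2}, the vertical part of $\eta^H_1$ is $\omegaone$.

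Next I would describe the horizontal behavior of $\eta^H_1$ by contracting it against a Beltrami differential $\lambda$. Here the first-variation machinery of \S\ref{s-fv} enters: by Lemma \ref{202.3} and the operator $S = S[\lambda]$, the variation of the harmonic form in the direction $\lambda$ is exact and governed by the $H'$-valued function $\elll = 2\hatPhi d\ast(\omega'_{(1)}\lambda)$. Contracting the horizontal part of $\eta^H_1$ with $\lambda$ therefore produces $\elll$ up to its harmonic correction; the harmonic part is precisely the period that the harmonic projection (\ref{106.4}) attaches to the $1$-form $\elll\cdot\omegaone$, namely, up to the normalizing constant $1-g$, the vector $\clll = \frac{1}{1-g}\int_C(\elll\cdot\omegaone)\omegaone \in H$. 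Isolating this local-versus-harmonic split of the horizontal contraction is the conceptual heart of the computation.

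With these pieces in hand I would expand $(m\otimes m)((\eta^H_1)^{\otimes 4})$ and keep only the terms surviving the fiber integral. Since the fiber is two real dimensional, exactly two of the four factors must contribute vertical legs $\omegaone$ and the remaining two horizontal legs, one contracted with $\lambda$ and one with $\mubar$; summing over the choices, recording the wedge signs, and applying $m\otimes m$ (pairing the slots $(1,2)$ and $(3,4)$), the contractions in which each $m$ joins one $\omegaone$ to one horizontal function reproduce the first two summands, using $m(\omegaone\wedge\omegaone) = 2gB$ for the term
$$
2g\int_C(\elll\cdot\ellm)B,
$$
and the cross contractions giving $2\int_C(\elll\cdot\omegaone)(\omegaone\cdot\ellm)$. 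The remaining, purely harmonic contraction produces the bilinear period term $(2-2g)\clll\cdot\cllm$ via (\ref{106.4}). I expect the principal obstacle to be exactly this last term: extracting the fiber-integrated period $\clll$ cleanly from the harmonic part of the horizontal data and matching the constant $2-2g$ against the normalizations (the overall $\frac14$, the $1-g$ in the definition of $\clll$, and the wedge and symmetry factors).

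Finally I would verify that $E^F_1$ is a $(1,1)$-form on $\moduli$. Descent to the base is automatic, since $E^F_1$ is a fiber integral and $\eta_1$ is a $1$-form on $\Cg$ independent of the tangential basepoint. For the type, I evaluate $E^F_1$ on two holomorphic directions $\lambda, \lambda'$: then by (\ref{201.7}) both horizontal legs land in the isotropic subspace $H'$, while the surviving vertical factor $\omegaone\wedge\omegaone$ contributes, by bidegree, one leg in $H'$ and one in $H''$. Thus the four $H$-legs comprise three vectors in $H'$ and one in $H''$, so at least one of the two $m$-pairings must join two elements of $H'$ and hence vanishes by the isotropy of $H'$ recorded in \S\ref{s-fv}; therefore the $(2,0)$-component vanishes, and the conjugate argument kills the $(0,2)$-component. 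Hence $E^F_1$ is of type $(1,1)$, completing the proof.
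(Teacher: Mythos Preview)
Your outline matches the paper's: split $\eta^H_1$ into fiber and base parts, show the $(1,0)$-part takes values in the isotropic subspace $H'$ to get the $(1,1)$-type, then expand the fourfold product and integrate along the fiber. The paper carries this out via two short lemmas: Lemma~\ref{403.6} identifies the fiber restriction of $q$ (the $(1,0)$-part of $-\eta^H_1$) with $\omega'_{(1)}$ by a residue computation, and Lemma~\ref{403.2} computes the horizontal contraction directly as $\int_C q\lambda = \elll(P_0) + c^\lambda$, starting from the explicit formula (\ref{403.3}) for $q$ and using the adjunction identities (\ref{403.4})--(\ref{403.5}).

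The gap in your proposal is precisely this horizontal step. You write that ``by Lemma~\ref{202.3} \ldots\ contracting the horizontal part of $\eta^H_1$ with $\lambda$ therefore produces $\elll$ up to its harmonic correction,'' but Lemma~\ref{202.3} computes $\omegaonedot$, not the pairing of the quadratic differential $q$ with $\lambda$. The form $\eta^H_1$ is built from $\omega'_{(2)} = \ast d\Phi(\Omega_0)$, not from the variation of $\omegaone$, so the ``therefore'' is unjustified as stated; one needs either the paper's direct computation from (\ref{403.3}) or a separate argument identifying $q$ with a variational object. Your type argument and the final expansion are correct in outline (and the paper's algebra, expanding $\int_C(\elll+c^\lambda)\cdot(2\Omega_0 - 2gBI)\cdot(\ellm+\cllm)$, confirms the constant $(2-2g)$ you were worried about), but the horizontal-contraction formula is the load-bearing step and your route to it is not yet a proof.
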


We denote by $q$ the $(1, 0)$-part of $-\eta^H_1$
\begin{equation*}
q := \frac{1}{2-2g}(m\otimes
1)N(\omega'_{(1)}\omega'_{(2)} + \omega'_{(2)}\omega'_{(1)}),
\end{equation*}
which is a meromorphic quadratic differential on $C$ with a unique 
pole at $P_0$.

\begin{lemma}\label{403.2}
$$
\int_Cq\lambda = \elll(P_0) + c^\lambda
$$
for any $\lambda \in C^\infty(C; TC\otimes\overline{T^*C})$. 
\end{lemma}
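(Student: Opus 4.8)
The plan is to substitute the explicit second-order term and split $q$ into a smooth part and a singular part carrying the pole at $P_0$. Since $N|_{H^{\otimes 3}}$ is the cyclic-permutation sum, the two summands $\omega'_{(1)}\omega'_{(2)}$ and $\omega'_{(2)}\omega'_{(1)}$ contribute equally after applying $(m\otimes 1)N$ (cyclic invariance of the sum), so I only need to analyze $(m\otimes 1)N(\omega'_{(1)}\omega'_{(2)})$ and double it. Using $\omega_{(2)} = \ast dK_0 + I\ast dh$, I write $\omega'_{(2)} = \nu_0 + I(\ast dh)'$ with $\nu_0 = (\ast dK_0)'$ and $K_0 = \hatPhi(\Omega_0)$ smooth; correspondingly $q = q_A + q_B$, where $q_B$ is the piece built from $I(\ast dh)'$ (hence carrying the logarithmic singularity of $h$ at $P_0$) and $q_A$ the smooth piece built from $\nu_0$. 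I expect these two pieces to produce $\elll(P_0)$ and $\clll$ respectively.

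For the singular part, the $H^{\otimes 2}$-factor of the relevant tensor is the intersection form $I$, so the cyclic contraction collapses by (\ref{201.4}): the two outer terms give $(m\otimes 1_H)(\omega'_{(1)}I)$ and its mirror, each $-\omega'_{(1)}$, while the middle term contributes $2g\,\omega'_{(1)}$, for a total $(m\otimes 1)N(\omega'_{(1)}\otimes I) = (2g-2)\omega'_{(1)}$. Hence $q_B = -2(\ast dh)'\omega'_{(1)}$. To evaluate $\int_C q_B\lambda$ I would write $(\ast dh)' = -\sqrt{-1}\dd h$, integrate by parts, and identify the outcome through the identity $\hatPhi(\Omega)(P_0) = -\int_C h_{P_0}\Omega$ together with the definition $\elll = 2\hatPhi d\ast(\omega'_{(1)}\lambda)$, yielding $\int_C q_B\lambda = \elll(P_0)$.

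For the smooth part, I first use $mK_0 = 2g\hatPhi(B) = 0$, which kills the middle cyclic term; and the $H^{\otimes 2}$-part of $\nu_0$ is antisymmetric (inherited from $\Omega_0 = \omegaone\wedge\omegaone$), which makes the two remaining cyclic terms coincide, so $q_A = \frac{4}{2-2g}(\omega'_{(1)}\cdot\nu_0)$ (the contraction of $\omega'_{(1)}$ into the first slot of $\nu_0$, a quadratic differential with values in $H$). Independently I would rewrite $\llll = \int_C\elll\,\omegaone\wedge\omegaone = 2\int_C[d\ast(\omega'_{(1)}\lambda)]K_0$ using self-adjointness of $\hatPhi$, integrate by parts, and reduce the $(1,0)$-part of $dK_0$ to $\nu_0$, obtaining $\llll = -2\int_C(\omega'_{(1)}\lambda)\wedge\nu_0$. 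Applying $(m\otimes 1)$ and comparing the two ways of pairing the $(1,0)$- and $(0,1)$-forms with $\lambda$ then gives $\int_C q_A\lambda = \clll$. Adding the two pieces yields $\int_C q\lambda = \elll(P_0) + \clll$.

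The main obstacle is the treatment of the pole at $P_0$ in $q_B$: the integration by parts must be performed on $C$ minus a small disk $D_\epsilon(P_0)$, and one must check that the boundary integral over $\partial D_\epsilon(P_0)$ vanishes as $\epsilon\to 0$ — which it does because $h$ grows only like $\log\epsilon$ against a circle of circumference $2\pi\epsilon$ — so that the (convergent, since the pole is simple) singular integral genuinely reproduces the pointwise value $\elll(P_0)$. The remaining work is the tensor bookkeeping under $N$ and $(m\otimes 1)$, using (\ref{201.4})--(\ref{201.5}), the vanishing $mK_0 = 0$, and the antisymmetry of $\nu_0$; this is routine once the splitting $q = q_A + q_B$ is in place.
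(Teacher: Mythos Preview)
Your proposal is correct and follows essentially the same approach as the paper. Your splitting $q = q_A + q_B$ is exactly the paper's formula (\ref{403.3}), $q = \frac{2}{1-g}(m\otimes 1)(\omega'_{(1)}\nu_0) - 2\omega'_{(1)}\ast\dd h$; you spell out the cyclic contraction under $(m\otimes 1)N$ in more detail than the paper, which simply cites (\ref{301.4}) and (\ref{303.1}). For the integrals, the paper invokes the adjointness identities (\ref{403.4})--(\ref{403.5}) directly (valid for currents, so also for $\Omega = \delta_{P_0}$), whereas you propose to redo the singular piece by disk-excision and the identity $\hatPhi(\Omega)(P_0) = -\int_C h_{P_0}\Omega$; this is the same content unpacked by hand.
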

\begin{proof}
From (\ref{301.4}) and (\ref{303.1}) we have 
\begin{equation}
q = \frac{2}{1-g}(m\otimes 1)(\omega'_{(1)}\nu_0) - 2\omega'_{(1)}\ast\dd h.
\label{403.3}\end{equation}
In general, for any $2$-current $\Omega$ and smooth $1$-form $\varphi$,
we have
\begin{eqnarray}
\int_C\ast d\hatPhi(\Omega)\wedge\varphi &=& \int_C\Omega\hatPhi
d\ast\varphi\label{403.4}\\
\int_C\varphi\wedge\ast d\hatPhi(\Omega) &=& -\int_C(\hatPhi
d\ast\varphi)\Omega.\label{403.5}
\end{eqnarray}
Hence 
\begin{eqnarray*}
&&2(m\otimes 1)\int_C(\omega'_{(1)}\nu_0)\lambda = 
-2(m\otimes 1)\int_C(\omega'_{(1)}\lambda)\ast
d\hatPhi(\omegaone\wedge\omegaone)\\
&=& (m\otimes 1)\int_C\elll\omegaone\wedge\omegaone = (1-g)c^\lambda,
\end{eqnarray*}
$$
2\int_C(\omega'_{(1)}\ast\dd h)\lambda = 2\int_C(\omega'_{(1)}\lambda)\ast
d\hatPhi(\delta_{P_0}) = -\elll(P_0).
$$
Consequently $\int_Cq\lambda = c^\lambda + \elll(P_0)$, as was to be
shown.
\end{proof}

\begin{lemma}\label{403.6}
$$
\int_Cq\dc V' = (\omega'_{(1)}V')(P_0)
$$
for any $V' \in C^\infty(C; TC)$. In other words,
as a $(1, 0)$-form on $\Cg$, $q$ restricts to $\omega'_{(1)}$ on the 
fiber $C$ of the universal family $\pi: \Cg \to \moduli$.
\end{lemma}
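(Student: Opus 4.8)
The plan is to reduce the global pairing $\int_C q\,\dc V'$ to a residue computation at $P_0$, using the fact recalled in the definition of $q$ that it is a meromorphic quadratic differential whose only pole on $C$ is at $P_0$. Fix a holomorphic coordinate $z$ centered at $P_0$ and write $V' = V(z)\frac{\partial}{\partial z}$, so that $\dc V' = (\partial_{\overline z}V)\,d\overline z\otimes\frac{\partial}{\partial z}$; contracting the vector field against one factor of $q = q(z)\,dz^2$ gives
\[
\int_C q\,\dc V' = \int_C q(z)\,\partial_{\overline z}V \; dz\wedge d\overline z .
\]
Since $q$ is holomorphic on $C\setminus\{P_0\}$, there the integrand equals $-d\bigl(q(z)V(z)\,dz\bigr)$, so Stokes' theorem over $C$ with a small coordinate disc $D_\epsilon$ removed turns the integral into the boundary term $\oint_{\partial D_\epsilon}q(z)V(z)\,dz$. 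Letting $\epsilon\to 0$, this tends to $2\pi\sqrt{-1}$ times the residue at $P_0$ of the $(1,0)$-form $qV'$; here I would check that the antiholomorphic Taylor coefficients of $V$ integrate to zero around the circle, so that only the principal part of $q$ and the value $V(P_0)$ contribute.

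It then remains to identify the principal part of $q$, for which I would use the explicit formula (\ref{403.3}),
\[
q = \frac{2}{1-g}(m\otimes 1)(\omega'_{(1)}\nu_0) - 2\,\omega'_{(1)}\,\ast\dd h .
\]
The first summand is smooth at $P_0$, because $\nu_0 = \ast\dd K_0$ and $K_0 = \hatPhi(\Omega_0)$ are smooth (the Green operator $\hatPhi$ sends smooth $2$-forms to smooth functions), so the entire singularity is carried by $-2\,\omega'_{(1)}\,\ast\dd h$. The function $h = -\hatPhi(\delta_{P_0})$ has the logarithmic singularity forced by $2\sqrt{-1}\,\dd\dc h = B - \delta_{P_0}$, namely $h \sim -\frac{1}{2\pi}\log|z|$ near $P_0$, whence the simple pole $\ast\dd h = -\sqrt{-1}\,\dd h \sim \frac{\sqrt{-1}}{4\pi}\frac{dz}{z}$. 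Multiplying by $\omega'_{(1)} = \sum_i \psi_i Y_i$, which is regular at $P_0$, shows that $qV'$ has at $P_0$ the residue $-\frac{\sqrt{-1}}{2\pi}\sum_i \psi_i(P_0)Y_i\,V(P_0)$; in particular $q$ has only a simple, not a double, pole there.

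Combining the two steps, $\int_C q\,\dc V'$ equals $2\pi\sqrt{-1}$ times this residue, and the constants $2\pi\sqrt{-1}$ and $-\frac{\sqrt{-1}}{2\pi}$ cancel to leave $\sum_i \psi_i(P_0)V(P_0)Y_i = (\omega'_{(1)}V')(P_0)$, as claimed. The concluding sentence of the lemma is then the geometric reading of this identity: the Beltrami differential $\dc V'$ is $\dc$-exact, hence trivial in $H^1(C;\mathcal{O}_C(TC))$, so the deformation it represents moves only the basepoint $P_0$ along the fiber; pairing the cotangent vector $q$ of $\Mgone$ against it therefore extracts precisely the fiberwise part of $q$, and the formula says this part, as a $(1,0)$-form on the fiber $C$, is $\omega'_{(1)}$.

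I expect the main difficulty to be bookkeeping rather than conceptual: threading every constant and sign correctly through the chain ``$\ast$ on $(1,0)$-forms $\to$ logarithmic singularity of $h$ $\to$ orientation of $\partial D_\epsilon$ $\to$ residue'', and rigorously justifying that the smooth part of $q$ together with the non-holomorphic part of $V$ drop out in the limit $\epsilon\to 0$.
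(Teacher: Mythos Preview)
Your proposal is correct and follows essentially the same route as the paper's proof: both reduce $\int_C q\,\dc V'$ via Stokes' theorem on $C\setminus D_\epsilon$ to a boundary integral $\oint_{|z|=\epsilon} qV'$, then identify the principal part of $q$ through the logarithmic singularity of $h$ in the term $-2\omega'_{(1)}\ast\dd h$ of (\ref{403.3}) to obtain $-2\ast\dd h \sim \frac{1}{2\pi\sqrt{-1}}\frac{dz}{z}$ and hence the residue $(\omega'_{(1)}V')(P_0)$. The paper's version is terser---it states the asymptotic of $-2\ast\dd h$ and passes to the limit in two lines---while you spell out the sign and constant bookkeeping and add the geometric interpretation at the end, but the argument is the same.
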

\begin{proof}
Let $z$ be a complex coordinate on $C$ centered at $P_0$. 
We have $-2\ast\dd h \sim \frac{1}{2\pi\sqrt{-1}}\frac{dz}{z}$ 
near $P_0$. Since $q$ is integrable on $C$ and holomorphic on
$C\setminus\{P_0\}$,
\begin{eqnarray*}
\int_Cq\dc V' &=& -\lim_{\epsilon\downarrow0}\int_{\vert z\vert\geq
\epsilon} d(qV') = \lim_{\epsilon\downarrow0}\oint_{\vert z\vert=
\epsilon} qV'\\
&=& \frac1{2\pi\sqrt{-1}}\lim_{\epsilon\downarrow0}\oint_{\vert z\vert=
\epsilon} (\omega'_{(1)}V')\frac{dz}{z} = (\omega'_{(1)}V')(P_0).
\end{eqnarray*}
This proves the lemma.
\end{proof}
By (\ref{401.4}) we obtain
\begin{eqnarray}
e^J\vert_C = \frac{2-2g}{2g}m({\eta^H_1}^{\otimes 2})\vert_C = 
\frac{2-2g}{2g}m(\omegaone\wedge\omegaone)
= (2-2g)B.\label{403.65}
\end{eqnarray}
\begin{proof}[Proof of Theorem \ref{402.4}]
The $1$-forms $q$ and $\omega'_{(1)}$ have values in $H'$. 
Since $H'$ is isotropic, we have $m(\omega'_{(1)}\omega'_{(1)}) = 
m(\omega'_{(1)}q) = m(qq) = 0$. 
This means the $(2,0)$- and the $(0,2)$-parts of $E^F_1$ vanish.\par
From what we have discussed above it follows
\begin{eqnarray*}
&& E^F_1(\lambda, \mubar)\\
&=& (m\otimes m)\left(\int_C\left(\int_Cq\lambda\right)
\left(\int_C\overline{q}\mubar\right)\omegaone\omegaone
-2\int_C\left(\int_Cq\lambda\right)\omegaone
\left(\int_C\overline{q}\mubar\right)\omegaone
\right)\\
&=& \int_C\left(\int_Cq\lambda\right)\cdot(2\Omega_0 - 2gBI)\cdot
\left(\int_C\overline{q}\mubar\right)\\
&=& \int_C(\elll + c^\lambda)\cdot(2\Omega_0 - 2gBI)\cdot
(\ellm + \overline{c^\mu}).
\end{eqnarray*}
Since $\int_C\elll B = 0$, we have
\begin{eqnarray*}
&& c^\lambda\cdot(\int_C2\Omega_0 - 2gBI)\cdot \overline{c^\mu} =
c^\lambda\cdot (2-2g)I\cdot \overline{c^\mu} = (2g-2)c^\lambda\cdot
\overline{c^\mu},\\ 
&& c^\lambda\cdot(\int_C(2\Omega_0 - 2gBI)\cdot
\ellm) = 2c^\lambda\cdot \int_C\omegaone\wedge\omegaone\cdot\ellm
 = (2-2g)c^\lambda\cdot \overline{c^\mu},\\
&& (\int_C\elll\cdot(2\Omega_0 - 2gBI))\cdot \overline{c^\mu}
 = (2-2g)c^\lambda\cdot \overline{c^\mu}.
\end{eqnarray*}
Hence we obtain
$$
E^F_1(\lambda, \mubar) = 2\int_C(\elll\cdot\omegaone)(\omegaone\cdot\ellm)
+ 2g\int_C(\elll\cdot\ellm)B + (2-2g)c^\lambda\cdot\overline{c^\mu}.
$$
This completes the proof.
\end{proof}

In order to compare $E^F_1$ with the $2$-form $e^J_1$ we prove
\begin{lemma}\label{403.7}
$$
E^F_1(\lambda, \mubar) = -2\int_C\ellm\cdot(\elll\omega'_{(1)} - 
\omega'_{(1)}\elll)\cdot\omega''_{(1)} +
(2-2g)c^\lambda\cdot\overline{c^\mu}
$$
\end{lemma}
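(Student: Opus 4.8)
The plan is to deduce Lemma~\ref{403.7} directly from Theorem~\ref{402.4}, which has just been proved, by recognising the right-hand side of the lemma as a rewriting of the right-hand side of the theorem. Since the term $(2-2g)c^\lambda\cdot\overline{c^\mu}$ appears identically in both formulas, it suffices to verify the identity of the two integral contributions, that is,
$$
-2\int_C\ellm\cdot(\elll\omega'_{(1)} - \omega'_{(1)}\elll)\cdot\omega''_{(1)}
= 2\int_C(\elll\cdot\omegaone)(\omegaone\cdot\ellm)
+ 2g\int_C(\elll\cdot\ellm)B.
$$
First I would expand the commutator $\elll\omega'_{(1)} - \omega'_{(1)}\elll$, so that the left-hand side splits into $-2\int_C(\ellm\cdot\elll)(\omega'_{(1)}\cdot\omega''_{(1)})$ and $+2\int_C(\ellm\cdot\omega'_{(1)})(\elll\cdot\omega''_{(1)})$; here I use that $\ellm$ and $\omega''_{(1)}$ take values in $H''$ while $\elll$ and $\omega'_{(1)}$ take values in $H'$, so in each term the two $H''$-factors are contracted against the two $H'$-slots.

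For the first piece I would invoke the relation $\omega'_{(1)}\cdot\omega''_{(1)} = gB$. This follows from $B = \frac1{2g}\omegaone\cdot\omegaone$ in (\ref{105.1}) together with the isotropy of $H'$ and $H''$: indeed $\omegaone\cdot\omegaone = \omega'_{(1)}\cdot\omega''_{(1)} + \omega''_{(1)}\cdot\omega'_{(1)} = 2\omega'_{(1)}\cdot\omega''_{(1)}$, the two mixed terms being equal. Combining this with the antisymmetry $\ellm\cdot\elll = -\elll\cdot\ellm$ of the intersection form turns $-2\int_C(\ellm\cdot\elll)(\omega'_{(1)}\cdot\omega''_{(1)})$ into $2g\int_C(\elll\cdot\ellm)B$, which is the second term of Theorem~\ref{402.4}.

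For the second piece I would factor the integrand as the wedge of the two scalar $1$-forms $\ellm\cdot\omega'_{(1)}$ and $\elll\cdot\omega''_{(1)}$, then use isotropy again to rewrite $\elll\cdot\omega''_{(1)} = \elll\cdot\omegaone$ and $\omega'_{(1)}\cdot\ellm = \omegaone\cdot\ellm$, the $H'$--$H'$ and $H''$--$H''$ pairings vanishing. The contribution $2\int_C(\ellm\cdot\omega'_{(1)})(\elll\cdot\omega''_{(1)})$ then matches $2\int_C(\elll\cdot\omegaone)(\omegaone\cdot\ellm)$, the first term of Theorem~\ref{402.4}, completing the argument.

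The only delicate point, and the step I would check most carefully, is the sign bookkeeping in this last piece. Replacing $\ellm\cdot\omega'_{(1)}$ by $\omega'_{(1)}\cdot\ellm = \omegaone\cdot\ellm$ produces a minus sign from the antisymmetry of $m$, while reordering the two scalar $1$-forms produces a second minus sign from graded commutativity. These two signs cancel, so no spurious factor survives; everything else is a routine application of the isotropy of $H'$ and $H''$ and of the antisymmetry of the intersection form.
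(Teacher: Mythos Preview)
Your proposal is correct and follows essentially the same route as the paper's proof: the paper also derives the lemma directly from Theorem~\ref{402.4} by the identities $\omega'_{(1)}\cdot\omega''_{(1)}=gB$, the isotropy of $H'$ and $H''$, and the antisymmetry of $m$, merely writing the chain of equalities in the opposite direction (from the expression in Theorem~\ref{402.4} to the commutator form). Your sign bookkeeping in the second piece is correct and matches the paper's computation.
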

\begin{proof}
\begin{eqnarray*}
&& \int_C(\elll\cdot\omega_{(1)})(\omega_{(1)}\cdot\ellm) +
g\int_C(\elll\cdot\ellm)B\\
&=& \int_C(\ellm\cdot\omega'_{(1)})(\elll\cdot\omega''_{(1)}) -
\int_C(\ellm\cdot\elll)(\omega'_{(1)}\cdot\omega''_{(1)}) \\
&=& - \int_C\ellm\cdot(\elll\omega'_{(1)} - \omega'_{(1)}\elll)\cdot
\omega''_{(1)}.
\end{eqnarray*}
\end{proof}

\section{The $2$-form $e^J_1$.}\label{s-e}

In this section we compute $e^J_1(\lambda, \mubar)$ for 
$\lambda, \mu \in C^\infty(C; TC\otimes \overline{T^*C})$. 
We begin by a review on the module $\LaH$ and $Sp(H)$-invariant 
linear forms on $\LaH\otimes \LaH$. We regard $H$ as a submodule
of $\LaH$ through the injection $\qH: Z\in H \mapsto Z\wedge I = N(ZI)
\in \LaH$. If we define $\pH := \frac{1}{2g-2}(m\otimes 1)\vert_{\LaH}: 
\LaH \to H$, we have $\pH\qH = 1_H$. Following \cite{MoF} we write
\begin{equation*}
U := \operatorname{Coker}\qH = \LaH/H.
\end{equation*}
We denote the natural projection by $\pU: \LaH \to U$. 
The module $U$ is identified with $\operatorname{Ker}\pH \subset 
\LaH$. We denote by $\qU: U \to \operatorname{Ker}\pU \subset \LaH$ 
the natural injection. As was proved in \cite[\S8]{Kaw2}, 
$$
\eta^U_1 := \qU\pU\eta_1
$$
can be regarded as a $1$-form on the moduli space $\moduli$ with 
values in the vector bundle $\LaH$.\par
The map $\hatM: H^{\otimes 6}\to \bC$ in (\ref{304.2}) satisfies
\begin{equation}
\hatM((Z_1Z_2Z_3)(Z_4I)) = (Z_1\cdot Z_4)(Z_2\cdot Z_3)
\label{501.2}
\end{equation}
for any $Z_i \in H$. 
\begin{lemma}\label{501.3}
For any $z = Z_1\wedge Z_2\wedge Z_3$ and $w = W_1\wedge W_2\wedge W_3
\in \LaH$ we have 
$$
\hatM(\qU\pU z)(\qU\pU w) = (M_2 - \frac{3}{2g-2}M_1)(zw).
$$
\end{lemma}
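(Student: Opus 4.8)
The plan is to reduce the statement to two bilinear identities on $\LaH$, after recognizing $\qU\pU$ as an explicit projection. First I would observe that, since $\pH\qH = 1_H$, the module $\LaH$ decomposes as the direct sum $\qH(H)\oplus\operatorname{Ker}\pH$, and that the second summand is precisely the copy of $U=\operatorname{Coker}\qH$ sitting inside $\LaH$. Under this identification the composite $\qU\pU\colon\LaH\to\LaH$ is nothing but the projection $1_{\LaH}-\qH\pH$ onto $\operatorname{Ker}\pH$. Hence for $z = Z_1\wedge Z_2\wedge Z_3$ we may write $z = \qH\pH z + \qU\pU z$ with $\qU\pU z\in\operatorname{Ker}\pH$, and likewise for $w$.

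Next I would establish two computations. The first is that $M_1$ depends only on the $H$-components: factoring $m\otimes m\otimes m$ through $(m\otimes 1)$ on the first factor and $(1\otimes m)$ on the second, and using that both restrict to $(2g-2)\pH$ on $\LaH$, gives
$$
M_1(zw) = (2g-2)^2\,(\pH z\cdot\pH w),
$$
so that $M_1$ vanishes as soon as one argument lies in $\operatorname{Ker}\pH$. The second and essential computation is the value of $\hatM$ on the image of $\qH$, namely
$$
\hatM((\qH V)\,w) = 3(2g-2)\,(V\cdot\pH w),\qquad V\in H,\ w\in\LaH,
$$
which I would derive from the pairing formula (\ref{501.2}). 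Two consequences are immediate: $\hatM((\qH V)(\qH V')) = 3(2g-2)(V\cdot V')$, and the mixed pairings $\hatM((\qH V)\,u)$ with $u\in\operatorname{Ker}\pH$ vanish, since $\pH u = 0$.

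Granting these, the proof closes quickly. Because the mixed terms vanish, expanding $M_2(zw)=\hatM(zw)$ along $z = \qH\pH z + \qU\pU z$ and $w = \qH\pH w + \qU\pU w$ leaves only the two pure terms,
$$
M_2(zw) = \hatM((\qH\pH z)(\qH\pH w)) + \hatM((\qU\pU z)(\qU\pU w)) = 3(2g-2)(\pH z\cdot\pH w) + \hatM((\qU\pU z)(\qU\pU w)).
$$
Solving for the last term and substituting $\pH z\cdot\pH w = (2g-2)^{-2}M_1(zw)$ yields
$$
\hatM(\qU\pU z)(\qU\pU w) = M_2(zw) - 3(2g-2)(\pH z\cdot\pH w) = \Big(M_2-\frac{3}{2g-2}M_1\Big)(zw),
$$
as claimed. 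The step I expect to be the genuine obstacle is the constant $3(2g-2)$ in the second identity: since $\LaH$ is embedded in $H^{\otimes 3}$ by full antisymmetrization, applying (\ref{501.2}) to a single monomial representative undercounts, and one must account for all three cyclically inequivalent contractions of the two factors of $\qH V = V\wedge I$ against $w$, which produces the factor $3$; the factor $2g-2$ then arises from summing $I = \sum_i X_iX_{g+i}-X_{g+i}X_i$ over the symplectic basis, exactly as in the verification $\pH\qH = 1_H$.
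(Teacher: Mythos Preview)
Your proof is correct and follows essentially the same approach as the paper: both use $\qU\pU = 1_{\LaH} - \qH\pH$ and reduce to the key identity $\hatM((\qH V)\,w) = 3(2g-2)(V\cdot\pH w)$, which the paper records as $\hatM(z^H w) = \frac{3}{2g-2}M_1(zw)$. The only cosmetic difference is in the bookkeeping of the bilinear expansion: the paper expands $\hatM((z-z^H)(w-w^H))$ into four terms and computes each of $\hatM(zw^H)$, $\hatM(z^Hw)$, $\hatM(z^Hw^H)$ to be $\frac{3}{2g-2}M_1(zw)$, whereas you expand $\hatM(zw)$ and observe that the mixed terms $\hatM((\qH\pH z)(\qU\pU w))$ vanish because they factor through $\pH(\qU\pU w)=0$; the arithmetic is identical.
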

\begin{proof} Denote $Z^H := \pH z$, $W^H := \pH w$, $z^H := \qH Z^H$ 
and $w^H := \qH W^H$. We have $\qU\pU z= z - z^H$ and $\qU\pU w= w -
w^H$. It is clear that $\hatM(zw) = M_2(zw)$. By straightforward 
computation using (\ref{501.2}) we obtain
\begin{eqnarray*}
&& \hatM(zw^H) = \hatM(z^Hw) = \hatM(N(Z^HI)w) = \frac{3}{2g-2}M_1(zw)
\quad \mbox{and}\\
&& \hatM(z^Hw^H) = 3\hatM(N(Z^HI)W^HI) = \frac{3}{2g-2}M_1(zw).
\end{eqnarray*}
Hence
$$
\hatM(\qU\pU z)(\qU\pU w) = \hatM(z-z^H)(w-w^H)
= M_2(zw) - \frac{3}{2g-2}M_1(zw),
$$
as was to be shown.
\end{proof}
By Lemma \ref{501.3} and (\ref{304.6}) we obtain
\begin{equation*}
e^J_1 = \frac{2g-2}{2g+1}\hatM((\eta^U_1)^{\otimes 2}).
\end{equation*}
Denote by $Q_0$ the $(1, 0)$-part of $\eta^U_1$
\begin{equation*}
Q_0 := N(\omega'_{(1)}\omega'_{(2)} + \omega'_{(2)}\omega'_{(1)}) 
+ N(qI),
\end{equation*}
which has values in ${\bigwedge}^2H'\wedge H''$. 
Since $H'$ and $H''$ are isotropic, $e^J_1$ is a $(1, 1)$-form.
We have 
\begin{equation}
\frac{2g+1}{2(2g-2)}e^J_1(\lambda, \mubar) 
= \hatM\left(\left(\int_CQ_0\lambda\right)
\left(\int_C\overline{Q_0}\mubar\right)\right)
\label{501.6}\end{equation}
for any $\lambda$ and $\mu \in C^\infty(C; TC\otimes\overline{T^*C})$. 

\begin{lemma}\label{502.1}
$$
\int_CQ_0\lambda = N(L^\lambda + c^\lambda I) 
\in \LaH \subset H^{\otimes 3}.
$$
\end{lemma}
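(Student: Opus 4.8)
The plan is to exploit that $N$ is a fixed endomorphism of $H^{\otimes 3}$, hence commutes with integration over the fiber, and to reduce every resulting scalar integral either to $\llll$ or to a value at $P_0$ by the integration-by-parts identities (\ref{403.4}) and (\ref{403.5}). Writing $Q_0 = N(\omega'_{(1)}\omega'_{(2)} + \omega'_{(2)}\omega'_{(1)}) + N(qI)$ and pulling $N$ outside the integral, I would first dispatch the term $N(qI)$: since $I$ is constant, $\int_C(qI)\lambda = (\int_Cq\lambda)I$, and Lemma \ref{403.2} gives $\int_Cq\lambda = \elll(P_0) + \clll$, so this piece contributes $N((\elll(P_0)+\clll)I)$.

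For the remaining terms I would substitute the $(1,0)$-part decomposition $\omega'_{(2)} = \nu_0 + I\ast\dd h$ coming from (\ref{303.1}) and expand $\omega'_{(1)}\omega'_{(2)}+\omega'_{(2)}\omega'_{(1)}$ into four quadratic differentials, each paired with $\lambda$ and integrated. Regrouping the product of a quadratic differential with the Beltrami differential $\lambda$ as a wedge of a $(1,0)$- with a $(0,1)$-form and using $\nu_0 = \ast\dd\hatPhi(\Omega_0)$ together with (\ref{403.4}) and the definition $\elll = 2\hatPhi d\ast(\omega'_{(1)}\lambda)$, the two $\nu_0$-terms become $\tfrac12\llll$ and $\tfrac12 c(\llll)$, where $c$ denotes the cyclic permutation of the three tensor slots. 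Here the sole difference between the two integrals is the slot into which the $H'$-value of $\omega'_{(1)}$ (carried by $\elll$) is deposited, and this is exactly a cyclic shift. For the two $I\ast\dd h$-terms I would use $h = -\hatPhi(\delta_{P_0})$, so that (\ref{403.4}) replaces the integral by the evaluation of $\tfrac12\elll$ against $\delta_{P_0}$; these give $-\tfrac12\elll(P_0)I$ and $-\tfrac12 c(\elll(P_0)I)$.

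Finally I would apply $N$ and invoke its cyclic invariance $Nc = Nc^2 = N$, immediate from the definition of $N$ as $1+c+c^2$ in (\ref{303.1}). The two $\llll$-contributions then coalesce into a single $N(\llll)$; the two basepoint terms $\pm\tfrac12 N(\elll(P_0)I)$ produced by the $\ast\dd h$-terms cancel against the $\elll(P_0)I$-part coming from $N(qI)$; and what survives is precisely $N(\llll) + N(\clll I) = N(\llll + \clll I)$. As a consistency check I note that the same two $\nu_0$- and $\ast\dd h$-identities, specialised by $(m\otimes 1)$, already reproduce the computation in Lemma \ref{403.2}.

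I expect the main obstacle to be purely bookkeeping: fixing a single sign convention for contracting a quadratic differential with $\lambda$ (so that $\int_C(\alpha\beta)\lambda = \int_C\beta\wedge(\alpha\lambda)$) and then tracking which of the three $H$-factors each integral deposits its value into. The conceptual content is light — the cyclic symmetry of $N$ is what simultaneously identifies the two reordered $\llll$-terms and annihilates the non-canonical $\elll(P_0)$-terms — but the computation only closes if the slot orderings and the signs induced by $h = -\hatPhi(\delta_{P_0})$ are handled consistently throughout.
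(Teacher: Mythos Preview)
Your proposal is correct and follows essentially the same route as the paper: the same decomposition $\omega'_{(2)}=\nu_0+I\ast\dd h$ from (\ref{303.1}), the same integration-by-parts identity $2\int_C\omega'_{(1)}\nu_0\lambda=L^\lambda$, and the same use of the cyclic invariance $Nc=N$. The only cosmetic difference is ordering: the paper substitutes the explicit formula (\ref{403.3}) for $q$ and cancels the $\ast\dd h$-terms inside $Q_0$ \emph{before} integrating (so only the single integral $2\int_C\omega'_{(1)}\nu_0\lambda$ remains), whereas you integrate each piece and cancel the resulting $\elll(P_0)$-contributions afterward via Lemma~\ref{403.2}.
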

\begin{proof} By (\ref{403.3}) and (\ref{303.1})
\begin{eqnarray*}
Q_0 &=& 2N(\omega'_{(1)}\omega'_{(2)} - \omega'_{(1)}(\ast\dd h)I
+ \frac1{1-g}\omega'_{(1)}\cdot \nu_0 I)\\
&=& 2N(\omega'_{(1)}\nu_0 + \frac1{1-g}\omega'_{(1)}\cdot \nu_0 I).
\end{eqnarray*}
Moreover
$$
2\int_C\omega'_{(1)}\nu_0\lambda = -2\int_C\omega'_{(1)}\lambda\ast
d\hatPhi(\Omega_0) = \int_C\elll\omegaone\wedge\omegaone = L^\lambda.
$$
Hence 
$$
\int_CQ_0\lambda = N(L^\lambda + \frac1{1-g}(m\otimes 1)(L^\lambda)I) 
= N(L^\lambda + c^\lambda I),
$$
as was to be shown.
\end{proof}

For simplicity we write 
\begin{equation*}
E^J_1 := \frac{2g+1}{6(2g-2)}e^J_1.
\end{equation*}
Then we have

\begin{lemma}\label{503.2}
$$
E^J_1(\lambda, \mubar) = \hatM((NL^\lambda)\overline{L^\mu}) + 
(2-2g)c^\lambda\cdot\overline{c^\mu}.
$$
\end{lemma}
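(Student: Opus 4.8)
The plan is to start from the explicit expression already available for $e^J_1$ and reduce the whole computation to contractions of the tensor $\llll$ against the intersection form. By the definition $E^J_1 = \frac{2g+1}{6(2g-2)}e^J_1$ together with (\ref{501.6}), one has
\[
E^J_1(\lambda,\overline{\mu}) = \tfrac13\,\hatM\Bigl(\bigl(\textstyle\int_C Q_0\lambda\bigr)\bigl(\textstyle\int_C\overline{Q_0}\,\overline{\mu}\bigr)\Bigr),
\]
and Lemma \ref{502.1}, applied to $\lambda$ and (after conjugation) to $\mu$, rewrites the two factors as $N(\llll + \clll I)$ and $N(\lllm + \cllm I)$. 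So everything comes down to evaluating $\hatM\bigl(N(\llll+\clll I)\cdot N(\lllm+\cllm I)\bigr)$.

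The first key step is to kill one copy of $N$. Since $\hatM$ pairs the three slots of its two arguments diagonally, it is invariant under the simultaneous cyclic rotation $\sigma$ of both triples; hence $\hatM(\sigma^k x\cdot\sigma^l y) = \hatM(x\cdot\sigma^{l-k}y)$, and summing over $k,l$ gives $\tfrac13\hatM(Nx\cdot Ny) = \hatM(x\cdot Ny) = \hatM(Nx\cdot y)$. Thus $E^J_1(\lambda,\overline\mu) = \hatM\bigl((\llll+\clll I)\cdot N(\lllm+\cllm I)\bigr)$, which I expand bilinearly into four pieces: the $\llll$--$\lllm$ piece, two ``cross'' pieces each carrying one factor $I$, and the $I$--$I$ piece. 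By the same cyclic move the first piece is exactly $\hatM((N\llll)\lllm)$, the first term of the claim; it remains to show the other three pieces sum to $(2-2g)\clll\cdot\cllm$.

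The second step is to evaluate the three remaining pieces using (\ref{501.2}) and the contractions of $\llll = \int_C\elll\,\omegaone\wedge\omegaone$. The relevant contractions are $(1\otimes m)\llll = 2g\int_C\elll B = 0$ (since $\int_C\elll B=0$), $(m\otimes 1)\llll = (1-g)\clll$ by (\ref{402.3}), and the same contraction taken in the reversed order $\int_C\omegaone\wedge(\elll\cdot\omegaone) = -(1-g)\clll$ by antisymmetry of the wedge. Running these through the three cyclic rotations in $N$, the rotation that contracts the two $\omegaone$-slots dies (that contraction is $0$), while the other two rotations survive and, because of the reversed-order sign, contribute with the same sign; so each cross piece gives $2(1-g)\clll\cdot\cllm$. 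For the $I$--$I$ piece one computes directly, from the symplectic expansion (\ref{201.3}) of $I$ and the defining diagonal formula for $\hatM$, that $\hatM\bigl(N(ZI)\cdot WI\bigr) = (2g-2)(Z\cdot W)$ for all $Z,W\in H$; with $Z=\clll$, $W=\cllm$ this gives $(2g-2)\clll\cdot\cllm$. Adding the three pieces yields $\bigl(2(1-g)+2(1-g)+(2g-2)\bigr)\clll\cdot\cllm = (2-2g)\clll\cdot\cllm$, which completes the proof.

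The step I expect to require the most care is the sign bookkeeping in the cross pieces: because the intersection form is antisymmetric, the two nontrivial cyclic rotations contract $\llll$ in the order $(m\otimes 1)$ and in its reverse, and one must track that the reversed contraction produces $-(1-g)\clll$, so that the two contributions add rather than cancel. Getting these signs right is precisely what turns a naive $(2g-2)$ into the correct $(2-2g)$, and it is the only genuinely delicate point; the identity $\hatM(N(ZI)\cdot WI)=(2g-2)(Z\cdot W)$ is a short but sign-sensitive symplectic-basis computation.
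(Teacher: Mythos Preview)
Your proposal is correct and follows essentially the same route as the paper. Both arguments start from (\ref{501.6}) and Lemma \ref{502.1}, reduce $\tfrac13\hatM(Nx\cdot Ny)$ to $\hatM((Nx)\cdot y)$ via the diagonal cyclic invariance of $\hatM$, expand bilinearly into the same four pieces, and evaluate each piece to obtain $(2-2g)+(2-2g)+(2g-2)=(2-2g)$ for the three non-leading terms; the paper merely suppresses the cyclic-reduction step and writes the four-term expansion directly.
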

\begin{proof} By (\ref{501.6}) we have
$$
E^J_1(\lambda, \mubar) = \hatM((NL^\lambda)\overline{L^\mu}) 
+ \hatM((NL^\lambda)\overline{c^\mu}I)
- \hatM((N\overline{L^\mu})c^\mu I) 
+ \hatM((Nc^\lambda I)\overline{c^\mu}I).
$$
The fourth term in the RHS is $(2g-2)c^\lambda\cdot\overline{c^\mu}$. 
From (\ref{501.2}) the second term is 
\begin{eqnarray*}
&&\hatM(N(\int_C\elll\omegaone\wedge\omegaone)\overline{c^\mu}I)\\
&=& (\int_C\elll(\omegaone\cdot\omegaone))\cdot\overline{c^\mu}
+ 2 \int_C(\elll\cdot\omegaone)(\omegaone\cdot\overline{c^\mu})\\
&=& 2g(\int_C\elll B)\cdot\overline{c^\mu} + 2((m\otimes
1)L^\lambda)\cdot\overline{c^\mu} = (2-2g)c^\lambda\cdot\overline{c^\mu}.
\end{eqnarray*}
Similarly the third term is equal to
$(2-2g)c^\lambda\cdot\overline{c^\mu}$. 
This proves the lemma.
\end{proof}

The amounts $\llll$ and $\clll$ depend only on $\lambda$ and the surface
$C$. This means $\eta^U_1$ and $e^J_1$ can be regarded as differential 
forms on the space $\Mg$. 

Moreover we obtain
\begin{proposition}\label{503.3}
$$
E^J_1(\lambda, \mubar) = 
-2\int_C\ellm\cdot\harmonic(\elll\omega'_{(1)}- \omega'_{(1)}\elll)\cdot
\omega''_{(1)}
 + (2-2g)c^\lambda\cdot\overline{c^\mu}.
$$
\end{proposition}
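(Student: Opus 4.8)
The plan is to take Lemma \ref{503.2} as the starting point and transform only the term $\hatM((N\llll)\lllm)$, since the contribution $(2-2g)\clll\cdot\cllm$ already agrees with the right-hand side of the proposition verbatim. Thus the entire content is the identity
\begin{equation*}
\hatM((N\llll)\lllm) = -2\int_C\ellm\cdot\harmonic(\elll\omega'_{(1)} - \omega'_{(1)}\elll)\cdot\omega''_{(1)}.
\end{equation*}
It is clarifying to hold this against the proof of Lemma \ref{403.7}: there the analogous quantity for $E^F_1$ is the \emph{same} integral but \emph{without} the harmonic projection $\harmonic$, so the real work is to locate the point at which $\harmonic$ is forced to appear.

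First I would expand the left-hand side by unfolding its two defining ingredients. Writing $N|_{H^{\otimes 3}}$ as the sum of its three cyclic shifts and $\hatM$ as a product of three intersection pairings, $\hatM((N\llll)\lllm)$ becomes a sum of three cyclically related full contractions of $\llll$ against $\lllm$. I then substitute $\llll = \int_C\elll\,\omegaone\wedge\omegaone$ and $\lllm = \int_C\ellm\,\omegaone\wedge\omegaone$ and decompose each factor as $\omegaone = \omega'_{(1)} + \omega''_{(1)}$. Because $\elll$ is valued in $H'$, $\ellm$ in $H''$, and $H'$, $H''$ are isotropic, each pairing survives only when it matches an $H'$-slot with an $H''$-slot; this kills the overwhelming majority of the terms generated by the cyclic sum and the $\omega'/\omega''$ expansion. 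The antisymmetry of $\omegaone\wedge\omegaone$ in the two form-slots of $\llll$ is what organises the survivors into the antisymmetric combination $\elll\omega'_{(1)} - \omega'_{(1)}\elll$, exactly as the same mechanism produced this combination in Lemma \ref{403.7}.

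The crucial step, and the one I expect to be the main obstacle, is to recognise that two of the surviving $\omegaone$-factors, one from $\llll$ and one from $\lllm$, combine into the integral kernel of the harmonic projection. Formula (\ref{106.4}) reads $\harmonic\psi = -\omegaone\cdot\int_C\omegaone\wedge\psi$ for any $1$-current $\psi$; interpreted as a kernel in two points, the intersection contraction of $\omegaone$ at one point against $\omegaone$ at the other is precisely minus the integral kernel of $\harmonic$. Hence the product of the two separate fibre integrals carried by $\llll$ and $\lllm$ is rewritten as a single outer integral in which $\harmonic$ is applied to the $1$-form $\elll\omega'_{(1)} - \omega'_{(1)}\elll$ assembled from the remaining $\llll$-factors, while the other two intersection pairings become the contractions $\ellm\cdot(\;)$ and $(\;)\cdot\omega''_{(1)}$ displayed in the statement. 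This is exactly the feature separating $e^J_1$ from $E^F_1$: in $E^F_1 = \frac14\fiber(m\otimes m)(\eta^H_1)^{\otimes 4}$ the integration along the fibre couples all factors at a single point, so no harmonic kernel appears (Lemma \ref{403.7}), whereas here the two independent fibre integrals in $\llll$ and $\lllm$ force the kernel of $\harmonic$ to intervene.

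Finally I would collect the surviving terms and fix the overall constant. The bookkeeping of signs through the cyclic shifts, the wedge antisymmetry, and the sign in (\ref{106.4}) is the delicate part; the coefficient $-2$ should emerge from the combinatorics of the cyclic sum together with the antisymmetrisation of $\omegaone\wedge\omegaone$. Once the constant is matched, the stated formula follows. As a bonus, comparison with Lemma \ref{403.7} then records the difference as $E^F_1(\lambda,\mubar) - E^J_1(\lambda,\mubar) = -2\int_C\ellm\cdot(1-\harmonic)(\elll\omega'_{(1)} - \omega'_{(1)}\elll)\cdot\omega''_{(1)}$, which is presumably the form in which the later computation of $e^F_1 - e^J_1$ will use it.
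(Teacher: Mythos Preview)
Your proposal is correct and follows essentially the same route as the paper: start from Lemma \ref{503.2}, reduce the problem to rewriting $\hatM((N\llll)\lllm)$, expand $\llll$ and $\lllm$ via $\omegaone = \omega'_{(1)} + \omega''_{(1)}$, kill terms by isotropy of $H'$ and $H''$, and recognise the surviving double integral as the kernel of $\harmonic$ via (\ref{106.4}). The paper organises the computation slightly differently---it first isolates the identity cyclic term $\hatM(\llll\lllm)$ and evaluates it as $-2M\int_C\harmonic(\elll\omega'_{(1)})\omega''_{(1)}\ellm$, then handles the two remaining cyclic shifts together to produce $+2M\int_C\harmonic(\omega'_{(1)}\elll)\omega''_{(1)}\ellm$---but the mechanism and all the key ideas are exactly the ones you describe.
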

\begin{proof}
We have 
\begin{eqnarray*}
&& \hatM(L^\lambda\overline{L^\mu})\\
&=& \hatM(\int_C\elll\omega'_{(1)}\wedge\omega''_{(1)})
(\int_C\ellm\omega''_{(1)}\wedge\omega'_{(1)})
+ \hatM(\int_C\elll\omega''_{(1)}\wedge\omega'_{(1)})
(\int_C\ellm\omega'_{(1)}\wedge\omega''_{(1)})\\
&=& 2\hatM(\int_C\elll\omega'_{(1)}\wedge\omega''_{(1)})
(\int_C\ellm\omega''_{(1)}\wedge\omega'_{(1)})\\
&=& 2M(\int_C\elll\omega'_{(1)}\wedge\omega_{(1)})
\cdot(\int_C\omega_{(1)}\wedge\omega''_{(1)}\ellm)\\
&=& -2M\int_C\harmonic(\elll\omega'_{(1)})\omega''_{(1)}\ellm.
\end{eqnarray*}
Hence 
\begin{eqnarray*}
&& \hatM((NL^\lambda)\overline{L^\mu})\\
&=& \hatM(L^\lambda\overline{L^\mu}) -
2\hatM(\int_C\omega_{(1)}\elll\wedge\omegaone)
(\int_C\ellm\omegaone\wedge\omega_{(1)})\\
&=& \hatM(L^\lambda\overline{L^\mu}) -
2M(\int_C\omega'_{(1)}\elll\wedge\omegaone)
\cdot(\int_C\omegaone\wedge\omega''_{(1)}\ellm)\\
&=& \hatM(L^\lambda\overline{L^\mu}) + 
2M\int_C\harmonic(\omega'_{(1)}\elll)\omega''_{(1)}\ellm\\
&=& 
2M\int_C\harmonic(\omega'_{(1)}\elll -
\elll\omega'_{(1)})\omega''_{(1)}\ellm,
\end{eqnarray*}
as was to be shown.
\end{proof}

Finally we compute the $2$-form on $\moduli$
\begin{equation}
E^D_1 := E^F_1 - E^J_1 = \frac{g^2}{(2g-2)^2}e^F_1 - 
\frac{2g^2 + 2g -1}{3(2g-2)^2}e^J_1
\label{504.1}\end{equation}
representing $\frac1{12}e_1$. 
\begin{lemma}\label{504.2}
$$
E^D_1(\lambda, \mubar) = 4M\int_C\harmonic(\omega'_{(1)}\lambda)
\wedge\omega'_{(1)}\hatPhi d\ast(\omega''_{(1)}\ellm -
\ellm\omega''_{(1)}).
$$
\end{lemma}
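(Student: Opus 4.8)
The plan is to begin from the two closed-form expressions already obtained, namely Lemma~\ref{403.7} for $E^F_1$ and Proposition~\ref{503.3} for $E^J_1$, whose only difference is that $E^F_1$ carries the full $1$-form $\elll\omega'_{(1)} - \omega'_{(1)}\elll$ while $E^J_1$ carries its harmonic projection. Subtracting, the common scalar term $(2-2g)\clll\cdot\cllm$ cancels, and, abbreviating $\Theta := \elll\omega'_{(1)} - \omega'_{(1)}\elll \in A^{1,0}(C)\otimes H'\otimes H'$, I get
\begin{equation*}
E^D_1(\lambda, \mubar) = -2\int_C\ellm\cdot\bigl(\Theta - \harmonic\Theta\bigr)\cdot\omega''_{(1)}.
\end{equation*}
Thus everything reduces to transporting the non-harmonic part $\Theta - \harmonic\Theta$ off the $\lambda$-slot and onto the $\mu$-data.

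First I would replace $\Theta - \harmonic\Theta$ by its Green-operator expression. By the Hodge decomposition $(\ref{106.3})$ we have $\Theta - \harmonic\Theta = \ast d\hatPhi d\Theta + d\hatPhi d\ast\Theta$, and since $\omega'_{(1)}$ is holomorphic ($\dc\omega'_{(1)}=0$, $\dd\omega''_{(1)}=0$) both pieces simplify. Substituting and splitting the integral, the core of the argument is a pair of integrations by parts, based on the adjunction identities $(\ref{403.4})$ and $(\ref{403.5})$, that move $\hatPhi d\ast$ from the $\lambda$-slot across to the conjugate $(0,1)$-form $\omega''_{(1)}\ellm - \ellm\omega''_{(1)}$, producing exactly the factor $\hatPhi d\ast(\omega''_{(1)}\ellm - \ellm\omega''_{(1)})$ that appears in the statement.

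Next I would recover the harmonic projection on the $\lambda$-slot. Here the mechanism is the defining relation $\elll = 2\hatPhi d\ast(\omega'_{(1)}\lambda)$: by the Hodge decomposition $(\ref{106.3})$ of the $(0,1)$-current $\omega'_{(1)}\lambda$ one has $\tfrac12 d\elll = d\hatPhi d\ast(\omega'_{(1)}\lambda)$, so after the integration by parts the $\elll$-term reorganizes into $\omega'_{(1)}\lambda$ minus its exact and co-exact parts, i.e.\ into $\harmonic(\omega'_{(1)}\lambda)$ via $(\ref{106.4})$. The exact and co-exact remnants that survive are wedged with $\omega'_{(1)}$ and contracted through $m$, and they drop out because $H'$ and $H''$ are isotropic (the same vanishing $m(\omega'_{(1)}\omega'_{(1)})=0$ used in the proof of Theorem~\ref{402.4}) together with the holomorphicity of $\omega'_{(1)}$. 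The factor $2$ in the definition of $\elll$ against the prefactor $-2$, after the sign carried by reversing the order to $\omega''_{(1)}\ellm - \ellm\omega''_{(1)}$, yields the asserted coefficient $4$ and contraction $M$.

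The hard part will be the bookkeeping in these integrations by parts: tracking the two tensor slots of $H'\otimes H'$ against the intersection pairings with $\ellm$ and $\omega''_{(1)}$, the signs produced by $\ast$ on $(1,0)$- versus $(0,1)$-forms, and verifying precisely which exact and co-exact terms vanish by isotropy and by holomorphicity. I expect that, exactly as in Proposition~\ref{503.3}, it is the harmonic-projection formula $(\ref{106.4})$ that finally converts the leftover pairing into $\harmonic(\omega'_{(1)}\lambda)$, and that the symmetrizer $N$ plays no further role here, both $E^F_1$ and $E^J_1$ having already been reduced to the symmetric integrals of Lemma~\ref{403.7} and Proposition~\ref{503.3}.
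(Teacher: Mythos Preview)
Your plan is essentially the paper's proof, and the ingredients you list (Lemma~\ref{403.7} minus Proposition~\ref{503.3}, the Hodge decomposition, the adjunction $(\ref{403.4})$, and the defining relation for $\elll$) are exactly the ones used. Two small points of comparison. First, since $\Theta$ is of type $(1,0)$ the paper invokes $(\ref{106.5})$ directly rather than the full $(\ref{106.3})$, writing $(1-\harmonic)\Theta = -2\sqrt{-1}\,\dd\hatPhi\dc\Theta$; this collapses your ``pair of integrations by parts'' to a single one. Second, the paper reverses your order: it first computes $\dc\Theta$ explicitly via $(\ref{504.3})$, $\dc\elll = \omega'_{(1)}\lambda - \harmonic(\omega'_{(1)}\lambda)$, and observes that the non-harmonic remnant $(\omega'_{(1)}\lambda)\omega'_{(1)} + \omega'_{(1)}(\omega'_{(1)}\lambda)$ vanishes identically as a $2$-form by antisymmetry of the wedge product (not by isotropy of $H'$ as you suggest --- no contraction by $m$ is needed at this step). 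Only after this simplification does the paper apply $(\ref{403.4})$ to pass $\hatPhi d\ast$ to the $\mu$-side. Your route would also work, but you should expect the cancellation to be a pointwise identity of $2$-forms rather than a consequence of $m(\omega'_{(1)}\omega'_{(1)})=0$.
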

\begin{proof}
By (\ref{106.3}) we have 
\begin{equation}\label{504.3}
\dd\elll = 2\dd\hatPhi d\ast(\omega'_{(1)}\lambda) = 
\omega'_{(1)}\lambda - \harmonic(\omega'_{(1)}\lambda).
\end{equation}
Hence by (\ref{106.5})
\begin{eqnarray*}
&&(1-\harmonic)(\elll\omega'_{(1)}-\omega'_{(1)}\elll)\\
&=& -2\sqrt{-1}\dd\hatPhi\dc(\elll\omega'_{(1)}-\omega'_{(1)}\elll)\\
&=& -2\sqrt{-1}\dd\hatPhi((\omega'_{(1)}\lambda -
\harmonic(\omega'_{(1)}\lambda))\omega'_{(1)}
+\omega'_{(1)}(\omega'_{(1)}\lambda
- \harmonic(\omega'_{(1)}\lambda)))\\
&=& 2\sqrt{-1}\dd\hatPhi(
\harmonic(\omega'_{(1)}\lambda)\omega'_{(1)}
+\omega'_{(1)}\harmonic(\omega'_{(1)}\lambda))
\end{eqnarray*}
From Lemmas \ref{403.7} and \ref{503.3} we have 
\begin{eqnarray*}
&& E^D_1(\lambda, \mubar) =
-2\int_C\ellm\cdot(1-\harmonic)(\elll\omega'_{(1)}-\omega'_{(1)}\elll)
\cdot\omega''_{(1)}\\
&=& 4\int_C\ellm\cdot\ast\dd\hatPhi(\harmonic(\omega'_{(1)}\lambda)\omega'_{(1)}
+\omega'_{(1)}\harmonic(\omega'_{(1)}\lambda))\cdot\omega''_{(1)}\\
&=&
4M\int_C\ast\dd\hatPhi(\harmonic(\omega'_{(1)}\lambda)\omega'_{(1)}
+\omega'_{(1)}\harmonic(\omega'_{(1)}\lambda))\omega''_{(1)}\ellm\\
&=&
4M\int_C\ast\dd\hatPhi(\harmonic(\omega'_{(1)}\lambda)\omega'_{(1)})
(\omega''_{(1)}\ellm-\ellm\omega''_{(1)})\\
&=&
4M\int_C\harmonic(\omega'_{(1)}\lambda)\omega'_{(1)}
\hatPhi d\ast(\omega''_{(1)}\ellm-\ellm\omega''_{(1)}).
\end{eqnarray*}
The last line follows from (\ref{403.4}).
\end{proof}

By similar computation we have 
\begin{equation}\label{505.1}
E^D_1(\lambda, \mubar) = 4\sqrt{-1}M
\int_C\harmonic(\omega'_{(1)}\lambda)\omega'_{(1)}
\hatPhi(\harmonic(\omega''_{(1)}\mubar)\omega''_{(1)}
+ \omega''_{(1)}\harmonic(\omega''_{(1)}\mubar)).
\end{equation}

\section{The second variation of the function $a_g$.}\label{s-sec}

This section is devoted to proving 
\begin{theorem}\label{603.2}
$$
\frac{-2\sqrt{-1}}{2g(2g+1)}\dd\dc a_g = \frac{1}{(2g-2)^2}(e^F_1 -
e^J_1).
$$
\end{theorem}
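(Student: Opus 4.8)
The plan is to evaluate both sides of the asserted identity on a pair of tangent vectors $(\lambda, \mubar)$ of $\moduli$ at $[C]$, represented by Beltrami differentials $\lambda, \mu \in C^\infty(C; TC\otimes\overline{T^*C})$, and to compare. The right-hand side is already available: Theorem \ref{402.4} (equivalently Lemma \ref{403.7}) gives $E^F_1(\lambda,\mubar)$, Proposition \ref{503.3} gives $E^J_1(\lambda,\mubar)$, and Lemma \ref{504.2} together with (\ref{505.1}) records their difference $E^D_1 = E^F_1 - E^J_1$ in closed form. Feeding the definitions $E^F_1 = \frac{g^2}{(2g-2)^2}e^F_1 - \frac{g}{(2g-2)^2}e^J_1$ and $E^J_1 = \frac{2g+1}{6(2g-2)}e^J_1$ into a short manipulation of the constants rewrites the target as
\[
\frac{1}{(2g-2)^2}(e^F_1 - e^J_1) = \frac{1}{g^2}E^F_1 - \frac{3}{g(2g+1)}E^J_1 ,
\]
so it is enough to compute the mixed second variation $\dd\dc a_g(\lambda, \mubar)$ and to check
\[
\frac{-2\sqrt{-1}}{2g(2g+1)}\dd\dc a_g(\lambda, \mubar) = \frac{1}{g^2}E^F_1(\lambda, \mubar) - \frac{3}{g(2g+1)}E^J_1(\lambda, \mubar) .
\]

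First I would obtain $\dd\dc a_g(\lambda,\mubar)$ by differentiating the first variation of $a_g$ once more: starting from $a_g = -M\int_C K_0\Omega_0$ with $\Omega_0 = \omegaone\wedge\omegaone$ and $K_0 = \hatPhi(\Omega_0)$, Theorem \ref{203.3} extracts the $(1,0)$-form $\dd a_g$, and the antiholomorphic variation along $\mubar$ then yields the $(1,1)$-form $\dd\dc a_g$. The decisive inputs are the first variation of the harmonic form (Lemma \ref{202.3}), encoded through the potential $\elll$ by
\[
\dd\elll = \omega'_{(1)}\lambda - \harmonic(\omega'_{(1)}\lambda)
\]
(see (\ref{504.3})), and the induced variations of $K_0$ and of the volume form $B$, the latter entering through Lemma \ref{204.1}. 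The mixed derivative acts on the two factors $\omegaone$ of $\Omega_0$ in complementary directions, producing the bilinear quantities $\elll$ and $\ellm$ of Section \ref{s-int}, while the variations of the Green operator $\hatPhi$ and of $B$ supply the remaining terms.

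The resulting expression is then reorganized exactly as in Sections \ref{s-int} and \ref{s-e}: the integration-by-parts identities (\ref{403.4})--(\ref{403.5}), the Hodge decompositions (\ref{106.3}) and (\ref{106.5}), and the isotropy of the subspaces $H'$ and $H''$ collapse the many terms into integrals of the shape $M\int_C\harmonic(\omega'_{(1)}\lambda)\,\omega'_{(1)}\,\hatPhi(\,\cdots)$ that already appear in Theorem \ref{402.4}, Proposition \ref{503.3}, Lemma \ref{504.2} and (\ref{505.1}). Once the second variation is written in these terms, the theorem reduces to matching scalar coefficients against the combination displayed above.

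The hard part will be the explicit second-variation computation. One must differentiate nested occurrences of $\hatPhi$ and $\harmonic$, track the variation $\mathop{B}^\centerdot$ of the volume form throughout, and manage the proliferation of terms produced by the mixed derivative. The delicate point is arranging that the numerous boundary and harmonic-projection contributions recombine---via the isotropy of $H'$ and $H''$---into precisely the closed forms of Sections \ref{s-int} and \ref{s-e}; this bookkeeping, together with keeping the $\sqrt{-1}$'s and the numerical factors correct, is where the argument is most error-prone.
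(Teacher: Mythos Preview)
Your plan is exactly the paper's approach: evaluate both sides on $(\lambda,\mubar)$, read off $\dd\dc a_g$ from the antiholomorphic variation of the quadratic differential $\Xi$ of Theorem~\ref{203.3} (the paper writes this as $(\dd\dc a_g)(\lambda,\mubar)=2\sqrt{-1}\int_C\stackrel{\circ}{\Xi}\lambda$), and then reduce the answer to the formulas for $E^F_1$, $E^J_1$ and $E^D_1$ already obtained in \S\ref{s-int}--\S\ref{s-e}. The one organizational device you do not mention and that the paper relies on is a small variation lemma computing $\stackrel{\circ}{\nu}$ for $\nu=\ast\dd\hatPhi(\Omega)$ (applied to $\nu_0$ and $\nu_1$); once that is in hand the bookkeeping you anticipate goes through and yields precisely your displayed combination $\frac{1}{g^2}E^F_1-\frac{3}{g(2g+1)}E^J_1$.
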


In the setting of (\ref{202.1}) we denote by $\stackrel{\circ}{\bigcirc}$ 
the antiholomorphic part of the variation
$\stackrel{\centerdot}{\bigcirc}$. By Theorem \ref{203.3} we have 
\begin{equation}\label{601.1}
(\dd\dc a_g)(\lambda,\mubar) =
2\sqrt{-1}\int_C\stackrel{\circ}{\Xi}\lambda
\end{equation}
for any $\lambda$ and $\mu \in C^\infty(C; TC\otimes \overline{T^*C})$. 
From (\ref{206.2}) the quadratic differential $\Xi$ is given by
\begin{eqnarray}
\Xi &=& M(\nu_0\nu_0) +
4M\ast\dd\hatPhi(\nu_0\wedge\omega''_{(1)})\omega'_{(1)}
\nonumber
\\
&=& M(\nu_0\nu_0) + 4M(\nu_1\omega'_{(1)}).\nonumber
\end{eqnarray}
Here we write simply 
\begin{equation*}
\nu_1 = \ast\dd\hatPhi(\nu_0\wedge\omega''_{(1)}).
\end{equation*}

 \par
From Lemma \ref{202.3} it follows
\begin{equation}
\stackrel{\circ}{\omegaone} = d\hatPhi d\ast(2\omega''_{(1)}\mubar) =
d\ellm.
\label{601.3}\end{equation}
Hence we have \begin{eqnarray}
\stackrel{\circ}{\Omega_0} &=& \stackrel{\circ}{\omegaone}\wedge\omegaone
+ \omegaone\wedge\stackrel{\circ}{\omegaone} = d(\ellm\omegaone -
\omegaone\ellm)\label{601.4}\\
\stackrel{\circ}{B} &=&
\frac1{2g}m\stackrel{\circ}{\Omega_0}
= \frac1{g}d(\ellm\cdot\omegaone).\label{601.5}
\end{eqnarray}
Let $\Omega = \{\Omega^t\}$, $t \in \bR$, 
$\vert t\vert\ll1$, be a family of $2$-forms with values in 
the algebra $\T$. Assume $A := \int_C\Omega \in \T$ is constant, 
and denote $\nu := \ast\dd\hatPhi(\Omega)$. Then we have
\begin{lemma}\label{602.1}
$$
\stackrel{\circ}{\nu} = \frac{1}{2}(\int_C\Omega\ellm)\cdot\omegaone
+ \ast\dd\hatPhi\stackrel{\circ}{\Omega} - \frac{1}{g}A\ast\dd\hatPhi
d(\ellm\cdot\omegaone).
$$
\end{lemma}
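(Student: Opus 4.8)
The plan is to differentiate the defining relation for $\nu$ and then retain only the $\mubar$-dependent part of the result, exactly as in the proof of Lemma \ref{202.3}. Write $K := \hatPhi(\Omega)$, so that $\nu = \ast\dd K$. Using $\dd = \frac{1}{2}(1 + \sqrt{-1}\ast)d$ on functions and $\ast\ast = -1$ on $1$-forms, one has $\nu = \frac{1}{2}\ast dK - \frac{\sqrt{-1}}{2}dK$, in which only $\ast$ and $K$ depend on $t$. Differentiating at $t=0$, recombining the two $\mathop{K}^\centerdot$-terms back into $\ast\dd\mathop{K}^\centerdot$, and using $\stardot = \ast S$ gives
$$
\nudot = \frac{1}{2}\ast S\,dK + \ast\dd\mathop{K}^\centerdot .
$$
Thus the variation splits into a contribution from the moving Hodge star and a contribution from the moving potential $K$.

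For the second contribution I differentiate $d\ast dK = \Omega - AB$, which is (\ref{106.1}) with $A = \int_C\Omega$ constant. Since $d$ commutes with the variation, this yields $d\ast d\mathop{K}^\centerdot = \mathop{\Omega}^\centerdot - A\mathop{B}^\centerdot - d\stardot dK$; the right-hand side has vanishing integral, so applying $\hatPhi$ (and noting that $\ast\dd$ annihilates the ambiguous additive constant) gives $\ast\dd\mathop{K}^\centerdot = \ast\dd\hatPhi(\mathop{\Omega}^\centerdot - A\mathop{B}^\centerdot - d\stardot dK)$. Taking antiholomorphic parts, the $\mathop{\Omega}^\centerdot$-piece becomes $\ast\dd\hatPhi\stackrel{\circ}{\Omega}$ and, by (\ref{601.5}), the $A\mathop{B}^\centerdot$-piece becomes $-\frac{1}{g}A\ast\dd\hatPhi\,d(\ellm\cdot\omegaone)$. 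These are exactly the second and third terms of the asserted formula, so it remains to identify the antiholomorphic part of $\frac{1}{2}\ast S\,dK - \ast\dd\hatPhi(d\stardot dK)$ with the harmonic term $\frac{1}{2}(\int_C\Omega\ellm)\cdot\omegaone$.

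This identification is the crux. Put $\Theta := \dc K\,\mubar$, the $\mubar$-part of $-\frac{1}{2}S\,dK$; it is a $(1,0)$-form. A direct type count gives $\left[\frac{1}{2}\ast S\,dK\right]^\circ = \sqrt{-1}\,\Theta$ and $\left[d\stardot dK\right]^\circ = 2\sqrt{-1}\,d\Theta$. Applying (\ref{106.5}) in the form $\Theta = \harmonic\Theta + 2\ast\dd\hatPhi(d\Theta)$ (legitimate because $\dd\Theta = 0$ on a curve, so $\dc\Theta = d\Theta$) reduces $\ast\dd\hatPhi(d\Theta)$ to $\frac{1}{2}(\Theta - \harmonic\Theta)$. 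Hence the two contributions combine as $\sqrt{-1}\Theta - \sqrt{-1}(\Theta - \harmonic\Theta) = \sqrt{-1}\harmonic\Theta$: every nonharmonic piece cancels, which is the whole point of the construction.

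It remains to recast $\sqrt{-1}\harmonic\Theta$. By (\ref{106.4}), $\harmonic\Theta = -\omegaone\cdot\int_C\omegaone\wedge\Theta = -\omegaone\cdot\int_C\omega''_{(1)}\wedge\Theta$, since $\omega'_{(1)}\wedge\Theta$ is of type $(2,0)$ and vanishes. I then feed (\ref{403.4}) with $\varphi := \omega''_{(1)}\mubar$, for which $\hatPhi d\ast\varphi = \frac{1}{2}\ellm$ and $\ast d\hatPhi(\Omega) = \ast dK$; keeping the $(0,1)$-part $\sqrt{-1}\dc K$ of $\ast dK$ against the $(1,0)$-form $\varphi$ identifies $\int_C\omega''_{(1)}\wedge\Theta$ with $-\frac{\sqrt{-1}}{2}\int_C\Omega\,\ellm$ up to interchanging the $\T$- and $H''$-tensor slots. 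Substituting and invoking the antisymmetry of the intersection pairing $m$ (so that $\omegaone\cdot X = -X\cdot\omegaone$) turns $\sqrt{-1}\harmonic\Theta$ into $\frac{1}{2}(\int_C\Omega\ellm)\cdot\omegaone$, as required. The only real obstacle is bookkeeping: tracking the order of the $\T$- and $H$-tensor factors through these manipulations and the sign that the antisymmetry of $m$ produces.
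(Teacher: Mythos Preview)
Your argument is correct. Both your proof and the paper's reach the same identity, but the organization differs.

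The paper first observes that $(\ast + \sqrt{-1})\nu = 0$ for all $t$; differentiating gives $(\nudot)'' = \nu\mu$, which depends only on $\mu$, so $\numaru$ is purely of type $(1,0)$. It then applies the Hodge decomposition (\ref{106.5}) \emph{directly to $\numaru$}: $\numaru = \harmonic\numaru + 2\ast\dd\hatPhi\dc\numaru$. The harmonic part is obtained by differentiating the identity $\int_C\nu\wedge\omegaone = 0$ (using $\stackrel{\circ}{\omegaone}=d\ellm$ and $d\nu = \tfrac12(\Omega - AB)$), and the exact part by differentiating $2d\nu = \Omega - AB$, which yields the remaining two terms immediately.

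You instead expand $\nudot$ explicitly as $\tfrac12\ast S\,dK + \ast\dd\mathop{K}^\centerdot$ and extract the $\mubar$-parts term by term. This forces you to manufacture the harmonic term through a cancellation: the non-harmonic piece of $\sqrt{-1}\Theta$ must be killed by $[\ast\dd\hatPhi(d\stardot dK)]^\circ$, and you verify this by applying (\ref{106.5}) to $\Theta$ rather than to $\numaru$. The final identification of $\sqrt{-1}\harmonic\Theta$ with $\tfrac12(\int_C\Omega\ellm)\cdot\omegaone$ then requires the tensor-slot and antisymmetry bookkeeping you flag at the end. All of this is sound, but it is more work than the paper's route, which sidesteps the cancellation entirely by Hodge-decomposing the already-$(1,0)$ object $\numaru$.
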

\begin{proof}
Differentiating $(\ast+\sqrt{-1})\nu = 0$, we get $0 = \stardot\nu +
(\ast+\sqrt{-1})\nudot = -2\sqrt{-1}\nu\mu + 2\sqrt{-1}(\nudot)''$. 
Hence $(\nudot)'' = \nu\mu$, and so $\numaru = 
(\numaru)' = \harmonic\numaru +
2\ast\dd\hatPhi\dc\stackrel{\circ}{\nu}$. \par
Since $\int_C\nu\wedge\omegaone = 0$ and $d\nu = \dc\nu = \frac12\Omega -
\frac12AB$, we have 
\begin{eqnarray*}
&&\harmonic\numaru = -(\int_C\numaru\wedge\omegaone)\cdot\omegaone
= (\int_C\nu\wedge\stackrel{\circ}{\omegaone})\cdot\omegaone\\
&=& (\int_C\nu\wedge d\ellm)\cdot\omegaone =
(\int_C(d\nu)\ellm)\cdot\omegaone =
\frac12(\int_C\Omega\ellm)\cdot\omegaone.
\end{eqnarray*}
The last line follows from $\int_CB\ellm = 0$. 
Hence we obtain
\begin{eqnarray*}
\numaru &=& \harmonic\numaru + 2\ast\dd\hatPhi\dc\stackrel{\circ}{\nu}
= \harmonic\numaru + \ast\dd\hatPhi(\stackrel{\circ}{\Omega}
- A\stackrel{\circ}{B})\\
&=& \frac12(\int_C\Omega\ellm)\cdot\omegaone 
+ \ast\dd\hatPhi\stackrel{\circ}{\Omega}
- \frac1{g}A\ast\dd\hatPhi d(\ellm\cdot\omegaone),
\end{eqnarray*}
as was to be shown.
\end{proof}

Differentiating $(\ast - \sqrt{-1})\omegaone = 
-2\sqrt{-1}\omega'_{(1)}$, we obtain from 
$\stackrel{\circ}{\ast}\omegaone=  2\sqrt{-1}\omega''_{(1)}\mubar$ and
(\ref{601.3})
\begin{eqnarray*}
&&-2\sqrt{-1}(\omega'_{(1)})^\circ 
= \stackrel{\circ}{\ast}\omegaone +
(\ast-\sqrt{-1})\stackrel{\circ}{\omegaone}\\
&=& 2\sqrt{-1}\omega''_{(1)}\mubar 
- 4\sqrt{-1}\dd\hatPhi d\ast(\omega''_{(1)}\mubar) 
= 2\sqrt{-1}\harmonic(\omega''_{(1)}\mubar),
\end{eqnarray*}
so that 
\begin{equation}
(\omega'_{(1)})^\circ = -\harmonic(\omega''_{(1)}\mubar).
\label{602.2}\end{equation}
Hence
\begin{equation}
M\int_C\nu_1(\omega'_{(1)})^\circ\lambda = 
-M\int_C\ast\dd\hatPhi(\nu_0\wedge\omega''_{(1)})
\harmonic(\omega''_{(1)}\mubar)\lambda = 0, 
\label{602.3}\end{equation}
since $H'$ and $H''$ are isotropic. 
Applying Lemma \ref{602.1} to $\nu_1$ we have
\begin{eqnarray}
&&4M\int_C(\nu_1\omega'_{(1)})^\circ\lambda =
4M\int_C\stackrel{\circ}{\nu_1}\omega'_{(1)}\lambda\nonumber\\
&=& 2M\left(\int_C\nu_0\wedge\omegaone\ellm\right)\cdot
\left(\int_C\omega'_{(1)}\omega'_{(1)}\lambda\right)\nonumber\\
&&+ 4M\int_C\ast\dd\hatPhi((\nu_0\wedge\omegaone)^\circ)
\omega'_{(1)}\lambda\nonumber\\
&=& 2M\left(\int_C\nu_0\wedge\omegaone\ellm\right)\cdot
\left(\int_C\omega'_{(1)}\omega'_{(1)}\lambda\right)
+ 2M\int_C(\nu_0\wedge\omegaone)^\circ\elll\nonumber\\
&=& 2M\left(\int_C\nu_0\wedge\omegaone\ellm\right)\cdot
\left(\int_C\omega'_{(1)}\omega'_{(1)}\lambda\right)
+ 2M\int_C\nu_0(d\ellm)\elll \label{602.4}\\
&&+ 2M\int_C\stackrel{\circ}
{\nu_0}\omegaone\elll\nonumber
\end{eqnarray}

Now we complute the third term $2M\int_C\stackrel{\circ}
{\nu_0}\omegaone\elll$. From Lemma \ref{602.1} applied to $\nu_0$
it follows
\begin{equation}
\stackrel{\circ}{\nu_0} =
\frac12\left(\int_C\omegaone\wedge\omegaone\ellm\right)\cdot\omegaone
+ \ast\dd\hatPhi d(\ellm\omegaone- \omegaone\ellm) - \frac1g
I\ast\dd\hatPhi d(\ellm\cdot\omegaone).
\label{602.5}\end{equation}
We have 
\begin{eqnarray*}
&&M\left(\int_C\omegaone\wedge\omegaone\ellm\right)
\cdot\left(\int_C\omegaone\wedge\omegaone\elll\right)
=-M(\llll\lllm) \\
&=& - \frac12\hatM(N\llll)\lllm + \frac12\hatM\llll\lllm.
\end{eqnarray*}
Since $H'$ and $H''$ are isotropic,
\begin{eqnarray*}
&& 2M\int_C\ast\dd\hatPhi d(\ellm\omegaone -
\omegaone\ellm)\omegaone\elll
= 2M\int_C\ast\dd\hatPhi d(\ellm\omega'_{(1)})\omega''_{(1)}\elll\\
&=& M\int_C\ellm\omega'_{(1)}\omega''_{(1)}\elll 
+ M\left(\int_C\ellm\omega'_{(1)}
\omegaone\right)
\cdot\left(\int_C\omegaone\omega''_{(1)}\elll\right)\\
&=& g\int_C\elll\cdot\ellm B - \frac12\hatM(\llll\lllm).
\end{eqnarray*}

On the other hand, from (\ref{402.3}),
\begin{eqnarray*}
&&(g-1)^2\cllm\cdot\clll =
\left(\int_C(\ellm\cdot\omegaone)\wedge\omegaone\right)\cdot
\left(\int_C\omegaone\wedge(\omegaone\cdot\elll)\right)\\
&=& - \int_C\harmonic(\ellm\cdot\omegaone)\omegaone\cdot\elll\\
&=& - \int_C(\ellm\cdot\omegaone)(\omegaone\cdot\elll) 
+ 2\int_C\ast\dd\hatPhi d(\ellm\cdot\omegaone)\omegaone\cdot\elll
\end{eqnarray*}
Hence 
\begin{eqnarray*}
&& -\frac2g M \int_CI\ast\dd\hatPhi d(\ellm\cdot\omegaone)\omegaone\elll 
= -\frac2g\int_C\ast\dd\hatPhi d(\ellm\cdot\omegaone)\omegaone\elll\\
&=& \frac{(g-1)^2}{g}\clll\cdot\cllm +
\frac1g\int_C(\elll\cdot\omegaone)(\omegaone\cdot\ellm).
\end{eqnarray*}
Consequently we obtain
\begin{eqnarray}
&&2M\int_C\stackrel{\circ}{\nu_0}\omegaone\elll\label{602.6}\\
&=& -\frac12\hatM(N\llll)\lllm + g\int_C\elll\cdot\ellm B 
+ \frac{(g-1)^2}{g}\clll\cdot\cllm + 
\frac1g\int_C(\elll\cdot\omegaone)(\omegaone\cdot\ellm).\nonumber
\end{eqnarray}

Next we compute $\int_CM(\nu_0\nu_0)^\circ\lambda$.
Here we remark $M(I\nu_0) = m\nu_0 =
m\ast\dd\hatPhi(\omegaone\wedge\omegaone) = 
2g\ast\dd\hatPhi B = 0$. From (\ref{602.5}) 
\begin{eqnarray*}
&& M(\nu_0\nu_0)^\circ = 2M(\stackrel{\circ}{\nu_0}\nu_0)\\
&=& M(\left(\int_C\omegaone\wedge\omegaone\ellm\right)
\cdot\omegaone\nu_0)
+ 2M(\ast\dd\hatPhi d(\ellm\omegaone-\omegaone\ellm)\nu_0).
\end{eqnarray*}
The second term is equal to 
\begin{eqnarray*}
&& 4M(\ast\hatPhi d(\ellm\omega'_{(1)})\nu_0)
= 2M(\ellm\omega'_{(1)}\nu_0) - 2M(\harmonic(\ellm\omega'_{(1)})\nu_0)\\
&=& 2M(\nu_0\ellm\omega'_{(1)}) +
2M(\left(\int_C\ellm\omega'_{(1)}\omega''_{(1)})\right)
\cdot\omega'_{(1)}\nu_0).
\end{eqnarray*}
Since $\omega'_{(1)}\lambda = \harmonic(\omega'_{(1)}\lambda) +
\dc\elll$, we have 
\begin{eqnarray}
&&\int_CM(\nu_0\nu_0)^\circ\lambda\nonumber\\
&=& M\left(\int_C\omegaone\wedge\omegaone\ellm\right)
\cdot\int_C\omegaone\nu_0\lambda 
+ 2M\int_C\nu_0\ellm\harmonic(\omega'_{(1)}\lambda)\label{603.1}\\
&&+ 2M\int_C\nu_0\ellm d\elll 
+ 2M(\left(\int_C\ellm\omega'_{(1)}\omega''_{(1)})\right)
\cdot\left(\int_C\omega'_{(1)}\nu_0\lambda\right)).\nonumber
\end{eqnarray}
By (\ref{403.5})
$$
2\int_C\omegaone\nu_0\lambda = 2\int_C\omega'_{(1)}\nu_0\lambda
= -2\int_C\omega'_{(1)}\lambda\nu_0 
= \int_C\elll\omegaone\wedge\omegaone = \llll.
$$
Hence the sum of the first and the fourth terms in (\ref{603.1}) is
$$
\frac12\hatM(\lllm\llll) + M(\lllm\cdot\llll) 
= -\frac12\hatM(N\llll)\lllm.
$$
The second term in (\ref{602.4}) and the third in (\ref{603.1}) are
\begin{eqnarray*}
&&2M\int_C\nu_0(d\ellm)\elll + 2M\int_C\nu_0\ellm d\elll 
= 2M\int_C (d\nu_0)\ellm\elll\\
&=& M\int_C\omegaone\wedge\omegaone\ellm\elll - M\int_CIB\ellm\elll\\
&=& \int_C(\elll\cdot\omegaone)(\omegaone\cdot\ellm) 
+ \int_C\elll\cdot\ellm B.
\end{eqnarray*}
The first in (\ref{602.4}) and the second in (\ref{603.1}) are
\begin{eqnarray*}
&& 2M\left(\int_C\nu_0\omegaone\ellm\right)\cdot
\left(\int_C\omega'_{(1)}\omega'_{(1)}\lambda\right) + 
2M\int_C\nu_0\ellm\harmonic(\omega'_{(1)}\lambda)\\
&=& 2M\left(\int_C\nu_0(\omegaone\ellm - \ellm\omegaone)\right)
\cdot\left(\int_C\omega'_{(1)}\omega'_{(1)}\lambda\right)\\
&=& 2M\left(\int_C\omegaone\wedge\omegaone\hatPhi
d\ast(\omega''_{(1)}\ellm -
\ellm\omega''_{(1)})\right)
\cdot\left(\int_C\omega'_{(1)}\omega'_{(1)}\lambda\right)\\
&=& 2M\left(\int_C\omega'_{(1)}\omega'_{(1)}\lambda\right)\cdot
\int_C\omegaone\wedge\omegaone\hatPhi
d\ast(\omega''_{(1)}\ellm -\ellm\omega''_{(1)})\\
&=& 2M\int_C\harmonic(\omega'_{(1)}\lambda)\omega'_{(1)}
\hatPhi d\ast(\omega''_{(1)}\ellm -\ellm\omega''_{(1)})\\
&=& \frac12 E^D_1(\lambda, \mubar).
\end{eqnarray*}
Consequently we obtain
\begin{eqnarray*}
&& \int_C\stackrel{\circ}{\Xi}\lambda\\
&=& -\hatM(N\llll)\lllm + (g+1)\int_C\elll\cdot\ellm B
+\frac{g+1}{g}\int_C(\elll\cdot\omegaone)(\omegaone\cdot\ellm)\\
&& + \frac12E^D_1(\lambda, \mubar) + \frac{(g-1)^2}{g}\clll\cdot\cllm\\
&=& -E_1^J(\lambda, \mubar) + \frac{2g+1}{2g}E^F_1(\lambda, \mubar)
+ \frac12E^D_1(\lambda, \mubar)\\
&=& \frac{(2g+1)2g}{4(2g-2)^2}(e^F_1 - e^J_1),
\end{eqnarray*}
which means
$$
\frac{-2\sqrt{-1}}{2g(2g+1)}\dd\dc a_g = \frac{1}{(2g-2)^2}(e^F_1 -
e^J_1).
$$
This completes the proof of Theorem \ref{603.2}.


\bibliographystyle{amsplain}

\end{document}